\numberwithin{equation}{section}
\newtheorem{theorem}{Theorem}[section]
\newtheorem{lemma}[theorem]{Lemma}
\newtheorem{corollary}[theorem]{Corollary}
\newtheorem{proposition}[theorem]{Proposition}
\theoremstyle{definition}
\newtheorem{remark}[theorem]{Remark}
\newtheorem{definition}[theorem]{Definition}
\newtheorem{example}[theorem]{Example}
\newtheorem{examples}[theorem]{Examples}
\theoremstyle{remark}
\begin{document}

\title{Homogeneous Besov spaces}

\author{Yoshihiro Sawano}

\maketitle

\begin{abstract}
This note is based on a series of lectures
delivered in Kyoto University.
This note surveys
the homogeneous Besov space
$\dot{B}^s_{pq}$
on ${\mathbb R}^n$ 
with $1 \le p,q \le \infty$ and $s \in {\mathbb R}$
in a rather self-contained manner.
Possible extensions
of this type of function spaces are breifly discussed
in the end of this article.
In particular,
the fundamental properties
are stated for the spaces
$\dot{B}^s_{pq}$
with $0<p,q \le \infty$ and $s \in {\mathbb R}$
and
$\dot{F}^s_{pq}$
with $0<p<\infty$,
$0<q \le \infty$ and $s \in {\mathbb R}$
as well as nonhomogeneous coupterparts
$B^s_{pq}$
with $0<p,q \le \infty$ and $s \in {\mathbb R}$
and
$F^s_{pq}$
with $0<p<\infty$,
$0<q \le \infty$ and $s \in {\mathbb R}$.
\end{abstract}

{\bf 2010 Classification}:
42B35,
46A04

\section{Introduction}

In this note we mean by a \lq \lq function space"
a linear function space made up of functions
especially defined on ${\mathbb R}^n$.
We envisage the following function spaces;
$C^k$,
$C^\infty$,
$C^\infty_{\rm c}$,
${\rm BC}$
(the set of all bounded continuous functions),
${\mathcal O}({\mathbb C})$
(the set of all holomorphic functions on ${\mathbb C}$
under the cannonical homeomorphism ${\mathbb C} \sim {\mathbb R}^2$)
and
the Sobolev space $W^{m,p}$
e.t.c..
What type of functions do these function spaces collect?
\begin{enumerate}
\item
Those which deal with size of functions:
$L^p$, ${\rm BC}$, $W^{m,p}$
\item
Those which deal with differentiability of functions:
$C^k$,
$C^\infty$,
$C^\infty_{\rm c}$,
${\rm BC}$ ($0$-times differentiability), $W^{m,p}$,
${\mathcal O}({\mathbb C})$.
\end{enumerate}
The Sobolev spaces can handle both of them
and are equipped with two parameters.

We can say that the function spaces can describe
many properties if they have many parameters.
This in turn implies that the more properties they can
describe, the more complicated their definition is.

\subsection{Advantage of Besov spaces}

Generally speaking, the function spaces
are difficult to handle if their definition is complicated.
However, although the definition of Besov spaces
is complicated, Besov spaces are important in many senses.

\subsubsection{\bf Mathematical transforms}

Besov spaces grasp many mathematical transforms nicely.
Let us give examples
without giving the precise definition of Besov spaces.
\begin{theorem}[Riemann-Lebesgue]\label{thm:1}
Define the Fourier transform ${\mathcal F}$
by
\[
{\mathcal F}f(\xi)\equiv
\frac{1}{\sqrt{(2\pi)^n}}
\int_{{\mathbb R}^n}f(x)e^{-i x \xi}\,dx
\quad (\xi \in {\mathbb R}^n).
\]
Then we have
${\mathcal F}$ maps $L^1$ continuously to
$L^\infty$.
\end{theorem}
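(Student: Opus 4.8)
The plan is to reduce the statement to one pointwise estimate. First I would note that for $f\in L^1$ and any fixed $\xi\in{\mathbb R}^n$ the integrand $x\mapsto f(x)e^{-ix\xi}$ is measurable and bounded in modulus by $|f|\in L^1$, so the integral defining ${\mathcal F}f(\xi)$ converges absolutely and ${\mathcal F}f(\xi)$ is a well-defined complex number for every $\xi$; in particular ${\mathcal F}f$ is a genuine function on ${\mathbb R}^n$.

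The key step is the trivial bound $|e^{-ix\xi}|=1$. It gives
\[
|{\mathcal F}f(\xi)|
\le \frac{1}{\sqrt{(2\pi)^n}}\int_{{\mathbb R}^n}|f(x)|\,dx
= \frac{1}{\sqrt{(2\pi)^n}}\,\|f\|_{L^1}
\qquad(\xi\in{\mathbb R}^n),
\]
and taking the supremum in $\xi$ yields $\|{\mathcal F}f\|_{L^\infty}\le(2\pi)^{-n/2}\|f\|_{L^1}$. Since ${\mathcal F}$ is manifestly linear, this estimate is exactly the assertion that ${\mathcal F}\colon L^1\to L^\infty$ is a bounded, hence continuous, linear operator, with operator norm at most $(2\pi)^{-n/2}$. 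There is essentially no obstacle here: the entire content is the uniform-in-$\xi$ domination of the kernel by an integrable function.

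If one wishes to record the Riemann--Lebesgue conclusion in its sharper classical form---that ${\mathcal F}f$ is actually continuous and tends to $0$ at infinity---I would append two further steps. Continuity of ${\mathcal F}f$ follows from the dominated convergence theorem applied along $\xi_k\to\xi$, again with $|f|$ as dominating function. For the vanishing at infinity, the approach is to first verify $\lim_{|\xi|\to\infty}{\mathcal F}f(\xi)=0$ on a dense subclass where it is a direct computation---characteristic functions of rectangles, or $C^\infty_{\rm c}$ functions via a single integration by parts---and then to pass to a general $f\in L^1$ by an $\varepsilon/3$ argument, the error term being controlled precisely by the uniform bound of the previous paragraph. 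The only mildly technical point is this density/approximation argument; everything else is routine.
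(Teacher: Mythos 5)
Your argument is correct and is the standard one: the estimate $|e^{-ix\xi}|=1$ gives the uniform bound $\|\mathcal{F}f\|_{L^\infty}\le (2\pi)^{-n/2}\|f\|_{L^1}$, and linearity then gives continuity. The paper states Theorem \ref{thm:1} as a classical fact without supplying a separate proof (it proceeds immediately to the Besov refinement, Theorem \ref{thm:2}, whose proof in Section \ref{s3.5} itself invokes the Hausdorff--Young inequality, i.e.\ exactly this $L^1\to L^\infty$ bound applied to each dyadic piece), so your direct elementary argument is precisely the intended one. Your appended remark on the sharper classical form (continuity of $\mathcal{F}f$ and decay at infinity) is a correct aside, though it is not required by the statement as formulated, which only asks for boundedness of $\mathcal{F}\colon L^1\to L^\infty$.
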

Theorem \ref{thm:1} is known as the Riemann-Lebesgue theorem.
However, in terms of the (nonhomogeneous) Besov spaces
$B^0_{1\infty}$ and $B^0_{\infty 1}$,
we can refine Theorem \ref{thm:1} as follows:
\begin{theorem}\label{thm:2}
\
\begin{enumerate}
\item
${\mathcal F}$ maps $L^1$ continuously to
$B^0_{\infty 1}$.
\item
${\mathcal F}$ maps $B^0_{1\infty}$ continuously to
$L^\infty$.
\end{enumerate}
\end{theorem}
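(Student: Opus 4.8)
The plan is to argue directly from the Littlewood--Paley description of the nonhomogeneous Besov spaces. I would fix a dyadic resolution of unity $\{\varphi_j\}_{j\ge0}\subset C^\infty_{\rm c}({\mathbb R}^n)$ with $\sum_{j\ge0}\varphi_j\equiv1$, $\operatorname{supp}\varphi_0\subset\{|\xi|\le2\}$ and $\operatorname{supp}\varphi_j\subset\{2^{j-1}\le|\xi|\le2^{j+1}\}$ for $j\ge1$, and write $\Delta_j=\varphi_j(D)=\mathcal{F}^{-1}\varphi_j\mathcal{F}$, so that, up to equivalence of norms, $\|f\|_{B^0_{\infty1}}=\sum_{j\ge0}\|\Delta_jf\|_\infty$ and $\|f\|_{B^0_{1\infty}}=\sup_{j\ge0}\|\Delta_jf\|_1$. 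The single structural fact both parts rest on is the bounded overlap of the supports: there is $N=N(n)$ with $\#\{j:\varphi_j(\xi)\neq0\}\le N$ for every $\xi$, whence $A:=\bigl\|\sum_{j\ge0}|\varphi_j|\bigr\|_\infty\le N\sup_j\|\varphi_j\|_\infty<\infty$.

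For part (2) I would take $f\in B^0_{1\infty}$ and use the decomposition $f=\sum_{j\ge0}\Delta_jf$ valid in $\mathcal{S}'$, so that $\mathcal{F}f=\sum_{j\ge0}\mathcal{F}(\Delta_jf)$ in $\mathcal{S}'$. Each $\Delta_jf$ lies in $L^1$ with $\|\Delta_jf\|_1\le\|f\|_{B^0_{1\infty}}$, hence by Theorem \ref{thm:1} each $\mathcal{F}(\Delta_jf)$ is a bounded continuous function with $\|\mathcal{F}(\Delta_jf)\|_\infty\le(2\pi)^{-n/2}\|f\|_{B^0_{1\infty}}$, and it is supported in $\operatorname{supp}\varphi_j$ since $\mathcal{F}(\Delta_jf)=\varphi_j\,\mathcal{F}f$ in $\mathcal{S}'$. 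Fixing $\xi$, at most $N$ of the values $\mathcal{F}(\Delta_jf)(\xi)$ are nonzero, so $\sum_j\mathcal{F}(\Delta_jf)$ is locally a finite sum of continuous functions; a short dominated-convergence argument identifies its pointwise sum with $\mathcal{F}f$, giving $|\mathcal{F}f(\xi)|\le\sum_j|\mathcal{F}(\Delta_jf)(\xi)|\le N(2\pi)^{-n/2}\|f\|_{B^0_{1\infty}}$, and taking the supremum over $\xi$ proves that $\mathcal{F}$ maps $B^0_{1\infty}$ into $L^\infty$ with operator norm at most $N(2\pi)^{-n/2}$.

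For part (1) I would take $f\in L^1$, so $\mathcal{F}f\in L^\infty\subset\mathcal{S}'$ by Theorem \ref{thm:1}, and use that, with the symmetric convention of Theorem \ref{thm:1}, $\widehat{\mathcal{F}f}(\eta)=f(-\eta)$; consequently
\[
\Delta_j(\mathcal{F}f)(x)=(2\pi)^{-n/2}\!\int_{{\mathbb R}^n}\varphi_j(\eta)f(-\eta)e^{ix\eta}\,d\eta
=(2\pi)^{-n/2}\!\int_{{\mathbb R}^n}\varphi_j(-\eta)f(\eta)e^{-ix\eta}\,d\eta
\]
after the change of variable $\eta\mapsto-\eta$, and in particular $\|\Delta_j(\mathcal{F}f)\|_\infty\le(2\pi)^{-n/2}\int_{{\mathbb R}^n}|\varphi_j(-\eta)|\,|f(\eta)|\,d\eta$. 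Summing over $j\ge0$ and interchanging the sum and the integral by Tonelli's theorem,
\[
\|\mathcal{F}f\|_{B^0_{\infty1}}\le(2\pi)^{-n/2}\int_{{\mathbb R}^n}\Bigl(\sum_{j\ge0}|\varphi_j(-\eta)|\Bigr)|f(\eta)|\,d\eta\le(2\pi)^{-n/2}A\,\|f\|_1,
\]
which yields both the mapping property and its continuity.

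The computations are short; the step I expect to require care rather than calculation is the convergence bookkeeping in part (2), namely checking that $\sum_j\mathcal{F}(\Delta_jf)$ represents $\mathcal{F}f$ genuinely as an $L^\infty$ function and not merely as an element of $\mathcal{S}'$, together with keeping the Fourier convention straight in the reflection identity $\widehat{\mathcal{F}f}(\eta)=f(-\eta)$ used in part (1). One could instead deduce part (1) from part (2) by a formal duality argument, since the two statements are adjoint to one another, but the direct Littlewood--Paley route avoids having to identify the dual space of $B^0_{1\infty}$.
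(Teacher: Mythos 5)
Your proof is correct and, for part (1), it takes essentially the same route as the paper's argument in Section~\ref{s3.5}: decompose $\mathcal{F}f$ via Littlewood--Paley, apply the elementary $L^1\to L^\infty$ bound for the Fourier transform to each piece, then sum and invoke the bounded overlap (or exact telescoping) of the dyadic supports. The only cosmetic difference is that the paper works with a \emph{homogeneous} decomposition in which $\sum_{j\in\mathbb{Z}}\varphi_j=\chi_{\mathbb{R}^n\setminus\{0\}}$, so that the final sum equals $\|f\|_1$ on the nose, whereas your nonhomogeneous decomposition gives the constant $(2\pi)^{-n/2}A$; both are legitimate and yield the same conclusion. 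Your use of the reflection identity $\widehat{\mathcal{F}f}(\eta)=f(-\eta)$ is consistent with the paper's symmetric normalization of $\mathcal{F}$.

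For part (2) the comparison is more interesting: the paper does not, in fact, supply a proof of $\mathcal{F}\colon B^0_{1\infty}\to L^\infty$. The statement the paper proves in Section~\ref{s3.5} is $\dot{B}^0_{\infty1}\hookrightarrow\mathrm{BC}$, and the historical notes merely cite Taibleson, Rivi\`ere and Sagher for the $B^0_{1\infty}\to L^\infty$ mapping. Your direct argument closes this gap: you decompose $f=\sum_j\Delta_jf$ in $\mathcal{S}'$, observe $\mathcal{F}(\Delta_jf)=\varphi_j\mathcal{F}f$ is supported in $\operatorname{supp}\varphi_j$ and bounded by $(2\pi)^{-n/2}\|f\|_{B^0_{1\infty}}$ via Theorem~\ref{thm:1}, and then exploit bounded overlap to control the pointwise sum. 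The one step you flag as needing care --- identifying $\mathcal{F}f$ with the locally finite pointwise sum $\sum_j\mathcal{F}(\Delta_jf)$ as a genuine $L^\infty$ function --- is handled correctly: the partial sums converge to $\mathcal{F}f$ in $\mathcal{S}'$ by construction, and converge pointwise and boundedly to a continuous bounded function $g$, so integrating against test functions and using dominated convergence identifies $\mathcal{F}f=g$ as distributions, hence $\mathcal{F}f\in L^\infty$ with the stated norm bound. So the proposal is complete; part (1) matches the paper's proof, and part (2) supplies a short, self-contained argument in the same spirit that the paper leaves to the literature.
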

Since
$B^0_{\infty1}$ is embedded into $L^\infty$ continuously
and
$B^0_{1\infty}$ is embedded into $L^1$ continuously,
Theorem \ref{thm:2} refines Theorem \ref{thm:1}
in two different directions.
We prove Theorem \ref{thm:2} in Section \ref{s3.5}.

Let us give another example.
\begin{theorem}\label{thm:151224-1}
Let $1 \le p,q \le \infty$ and $s>0$.
For $j=1,2,\ldots,n$,
we define the $j$-th Riesz transform by:
\[
R_j f(x) \equiv
\lim_{\varepsilon \to 0}
\int_{{\mathbb R}^n \setminus B(x,\varepsilon)}
\frac{(x_j-y_j)f(y)}{|x-y|^{n+1}}\,dy
\quad (x \in {\mathbb R}^n).
\]
Then 
$R_j$ maps $\dot{B}^s_{pq}$ continuously to itself.
\end{theorem}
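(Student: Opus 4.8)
The plan is to combine the Littlewood--Paley characterization of $\dot B^s_{pq}$ with the observation that $R_j$ is a Fourier multiplier whose symbol, once restricted to a dyadic frequency annulus, becomes a smooth compactly supported multiplier with a $L^1$ kernel norm that is invariant under the dyadic rescaling. That scale invariance is what converts the (badly behaved, at the endpoints) global boundedness question into a uniform family of trivial Young-type estimates.

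First I would set up notation: let $\{\varphi_k\}_{k\in{\mathbb Z}}$ be the homogeneous Littlewood--Paley resolution of unity, $\varphi_k=\varphi_0(2^{-k}\cdot)$ with $\operatorname{supp}\varphi_0\subset\{2^{-1}\le|\xi|\le 2\}$, so that $\|g\|_{\dot B^s_{pq}}\sim\big\|(2^{ks}\|\varphi_k(D)g\|_p)_{k\in{\mathbb Z}}\big\|_{\ell^q}$, and recall that $R_j=m(D)$ with $m(\xi)=-i\xi_j/|\xi|$, which is $C^\infty$ on ${\mathbb R}^n\setminus\{0\}$, homogeneous of degree $0$, and hence commutes with every $\varphi_k(D)$. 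At this stage I would also pause to justify that the principal-value operator in the statement really acts on $\dot B^s_{pq}$ as the multiplier $m(D)$ and descends to the quotient modulo polynomials: this is the step where the restrictions $s>0$ and $1\le p,q\le\infty$ enter, and it is the one genuinely delicate point of the argument; the subsequent norm estimate is short.

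The key local estimate: fix $\psi\in C^\infty_{\rm c}({\mathbb R}^n)$ with $\psi\equiv 1$ on $\operatorname{supp}\varphi_0$ and $\operatorname{supp}\psi\subset\{4^{-1}\le|\xi|\le 4\}$, and put $\Psi_0\equiv m\,\psi$ (extended by $0$ near the origin), so $\Psi_0\in C^\infty_{\rm c}({\mathbb R}^n)\subset{\mathcal S}({\mathbb R}^n)$, and $\Psi_k\equiv\Psi_0(2^{-k}\cdot)$. A routine check using $\psi\equiv 1$ on $\operatorname{supp}\varphi_0$ and the $0$-homogeneity of $m$ gives $m\,\varphi_k=\Psi_k\,\varphi_k$, hence $\varphi_k(D)R_j f=\Psi_k(D)\,\varphi_k(D)f$. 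By Young's inequality $\|\Psi_k(D)h\|_p\le\|{\mathcal F}^{-1}\Psi_k\|_1\,\|h\|_p$ for every $1\le p\le\infty$, while ${\mathcal F}^{-1}\Psi_k(x)=2^{kn}({\mathcal F}^{-1}\Psi_0)(2^k x)$ yields $\|{\mathcal F}^{-1}\Psi_k\|_1=\|{\mathcal F}^{-1}\Psi_0\|_1=:C_0<\infty$, a constant independent of $k$ (finite because $\Psi_0$ is Schwartz).

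Putting these together, $2^{ks}\|\varphi_k(D)R_j f\|_p\le C_0\,2^{ks}\|\varphi_k(D)f\|_p$ for every $k$, and taking $\ell^q$ norms in $k$ gives $\|R_j f\|_{\dot B^s_{pq}}\le C_0\|f\|_{\dot B^s_{pq}}$. I would emphasize that the argument is uniform down to the endpoints $p\in\{1,\infty\}$, where $R_j$ is \emph{not} bounded on $L^p$ itself; the frequency localization is precisely what restores the endpoint bounds, which is the structural reason Besov spaces are the right setting for singular integrals. Accordingly, the only real obstacle is the bookkeeping flagged in the second paragraph --- reconciling the singular-integral definition of $R_j$ with the multiplier $m(D)$ on $\dot B^s_{pq}$; the quantitative heart of the theorem is the three-line computation above.
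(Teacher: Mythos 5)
Your proposal is correct and is essentially the paper's own argument: the paper likewise writes the localized symbol as $\psi_{j,k}(\xi)=\xi_k|\xi|^{-1}(\varphi_{j-1}+\varphi_j+\varphi_{j+1})(\xi)$ (your $\Psi_k$, with the cutoff $\varphi_{j-1}+\varphi_j+\varphi_{j+1}$ playing the role of your $\psi(2^{-k}\cdot)$), applies Young's inequality, and uses the dilation invariance $\|{\mathcal F}^{-1}\psi_{j,k}\|_1=\|{\mathcal F}^{-1}\psi_{0,k}\|_1$ to get a constant uniform in $j$. The identification of the principal-value operator with the multiplier $m(D)$ on ${\mathcal S}'/{\mathcal P}$, which you rightly flag as the one delicate point, is in fact passed over silently in the paper, so you are if anything more careful than the source.
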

This is a big achievement;
$R_j$ does not map $L^1$ into $L^1$ continuously.
Since $\dot{B}^0_{12}$ is almost the same as $L^1$,
$\dot{B}^0_{12}$ nicely substitutes for $L^1$.
We prove Theorem \ref{thm:151224-1} in Section \ref{Section:A3}.

\subsubsection{\bf Special cases}
Many important function spaces are realized
as special cases of Besov spaces and Triebel-Lizorkin spaces,
which are variants of Besov spaces.
We refer to 
Sections \ref{section:4.1}
and
\ref{section:4.2}
for the definition of 
the {\it homogeneous Besov space} $\dot{B}^s_{pq}$
with
$0<p,q \le \infty$
and 
$s \in {\mathbb R}$
and
the {\it homogeneous Triebel-Lizorkin space} $\dot{F}^s_{pq}$
with
$0<p<\infty$,
$0<q \le \infty$
and 
$s \in {\mathbb R}$,
while we shall recall the definition of 
the {\it nonhomogeneous Besov space} $B^s_{pq}$
with
$0<p,q \le \infty$
and 
$s \in {\mathbb R}$
and
the {\it nonhomogeneous Triebel-Lizorkin space} 
$F^s_{pq}$
with
$0<p<\infty$,
$0<q \le \infty$
and 
$s \in {\mathbb R}$ 
in Sections
\ref{section:6.3}
and
\ref{section:6.4},
respectively.

\begin{theorem}\label{thm:160118-5}
Denote by $\{e^{t\Delta}\}_{t>0}$
the heat semi-group.
\begin{enumerate}
\item
For $1<p<\infty$
\begin{equation}\label{eq:160118-1}
L^p \sim \dot{F}^0_{p2} \sim F^0_{p2}.
\end{equation}
\item
$H^p \sim \dot{F}^0_{p2}$
for $0<p<\infty$,
where $H^p$ denotes the Hardy space
of all distributions $f$ for which
\begin{equation}\label{eq:160118-2}
\|f\|_{H^p} \equiv
\left\|\sup_{t>0}|e^{t\Delta}f|\right\|_{p}<\infty
\quad (f \in {\mathcal S}').
\end{equation}
\item
$h^p \sim F^0_{p2}$
for $0<p<\infty$,
where $h^p$ denotes the local Hardy space
of all distributions $f$ for which
\begin{equation}\label{eq:160118-3}
\|f\|_{h^p} \equiv
\left\|\sup_{t \in (0,1)}|e^{t\Delta}f|\right\|_{p}<\infty
\quad (f \in {\mathcal S}').
\end{equation}
\item
Denote by ${\rm BMO}$ the space
of bounded mean oscillations whose norm is given by:
\[
\|f\|_{{\rm BMO}}
\equiv
\sup_{B:{\rm balls}}
\frac{1}{|B|}\int_B 
\left|f(x)-\frac{1}{|B|}\int_B f(y)\,dy\right|\,dx
\]
for $f \in L^1_{\rm loc}$.
Then we have
\begin{equation}\label{eq:160118-4}
{\rm BMO} \sim \dot{F}^0_{\infty 2},
\end{equation}
\item
Denote by
${\rm bmo}$ the space
of local bounded mean oscillations whose norm is given by:
\begin{align*}
\|f\|_{{\rm bmo}}
&\equiv
\sup_{B:{\rm balls}, |B| = 1}
\int_B|f(x)|\,dx\\
&\quad+
\sup_{B:{\rm balls}, |B| \le 1}
\frac{1}{|B|}
\int_B\left|f(x)-\frac{1}{|B|}\int_B f(y)\,dy\right|\,dx.
\end{align*}
Then we have
\begin{equation}\label{eq:160118-5}
{\rm bmo} \sim F^0_{\infty 2}.
\end{equation}
\end{enumerate}
\end{theorem}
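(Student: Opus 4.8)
The plan is to reduce every identification to the Littlewood--Paley machinery that is in place once $\dot{F}^0_{pq}$ and $F^0_{pq}$ have been set up (Sections \ref{section:4.2} and \ref{section:6.4}). Write $\{\varphi_j\}_j$ for the dyadic partition of unity used in those definitions, so that $\|f\|_{\dot{F}^0_{p2}}=\|(\sum_j|\varphi_j*f|^2)^{1/2}\|_p$ is, in disguise, one of the classical square functions; the nonhomogeneous norm differs only by the low-frequency block $\Phi*f$. A preliminary remark underlying (2)--(5) is that the heat semigroup appearing in \eqref{eq:160118-2}--\eqref{eq:160118-3} is an admissible kernel in the Fefferman--Stein theory of maximal functions, since the Gaussian is smooth with all derivatives rapidly decreasing; this lets one replace $\sup_{t>0}|e^{t\Delta}f|$ (resp. $\sup_{0<t<1}|e^{t\Delta}f|$) by the grand maximal function up to equivalent norms.

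For item (1) I would prove $L^p\sim\dot{F}^0_{p2}$ for $1<p<\infty$ by vector-valued Calder\'on--Zygmund/Mikhlin theory: the map $f\mapsto(\varphi_j*f)_j$ has an $\ell^2$-valued convolution kernel satisfying the H\"ormander condition, hence is bounded $L^p\to L^p(\ell^2)$, giving $\|f\|_{\dot{F}^0_{p2}}\lesssim\|f\|_p$. The converse follows by duality together with Khintchine's inequality: from the Calder\'on reproducing formula $f=\sum_j\widetilde{\varphi}_j*(\varphi_j*f)$ (valid modulo polynomials), randomizing signs turns $\sum_j\widetilde{\varphi}_j*g_j$ into a single Mikhlin multiplier applied to $\sum_j\pm g_j$, controlled in $L^p$ by $\|(\sum_j|g_j|^2)^{1/2}\|_p$. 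The equivalence $L^p\sim F^0_{p2}$ is the same argument with the harmless extra term $\Phi*f$, bounded in $L^p$ trivially; equivalently one quotes that for $1<p<\infty$ the homogeneous and nonhomogeneous norms differ only by the behaviour near the origin on the Fourier side.

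For items (2) and (3) the range $p\le1$ is where the work sits. I would first use the Fefferman--Stein equivalence of maximal functions to pass to the grand maximal function, and then invoke the square-function characterization of $H^p$, $\|f\|_{H^p}\sim\|S_\psi f\|_p$, which after discretizing the continuous parameter is comparable to $\|(\sum_j|\varphi_j*f|^2)^{1/2}\|_p=\|f\|_{\dot{F}^0_{p2}}$. Since the two-sided bound cannot go through the vector-valued $L^p$ theory when $p\le1$, I would instead run it through decompositions: an $H^p$-atom is, up to a harmless constant, a molecule for $\dot{F}^0_{p2}$, so the atomic decomposition of $H^p$ yields $H^p\hookrightarrow\dot{F}^0_{p2}$, while the Frazier--Jawerth $\varphi$-transform decomposition of $\dot{F}^0_{p2}$ produces $H^p$-molecules and gives the reverse inclusion. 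Item (3) is the verbatim local analogue, using Goldberg's local atoms for $h^p$ and the truncation $\sup_{0<t<1}$, matched against the inhomogeneous decomposition of $F^0_{p2}$, the low-frequency block being the source of the ``local'' restriction.

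For items (4) and (5): here $\|f\|_{\dot{F}^0_{\infty2}}$ is the Carleson-type quantity $\sup_Q\big(|Q|^{-1}\int_Q\sum_{2^{-j}\le\ell(Q)}|\varphi_j*f(x)|^2\,dx\big)^{1/2}$, and $\mathrm{BMO}\sim\dot{F}^0_{\infty2}$ is exactly the Fefferman--Stein characterization of $\mathrm{BMO}$ by Carleson measures. I would either reprove this directly --- the easy direction noting that $\varphi_j*f$ sees $f$ only modulo constants and then running a John--Nirenberg/Carleson-embedding argument, the hard direction by pairing $f$ with atoms of $H^1=\dot{F}^0_{12}$ from item (2) --- or, more economically, quote the Triebel--Lizorkin duality $(\dot{F}^0_{12})^*=\dot{F}^0_{\infty2}$ together with Fefferman's duality $(H^1)^*=\mathrm{BMO}$ and item (2). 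Item (5) is the local version: combine $h^1=F^0_{12}$ with Goldberg's duality $(h^1)^*=\mathrm{bmo}$, or prove the local Carleson-measure characterization of $\mathrm{bmo}$ directly. The main obstacle throughout is not item (1), which is soft, but the small-$p$ and endpoint statements: specifically (a) checking that the heat maximal functions in \eqref{eq:160118-2}--\eqref{eq:160118-3}, rather than a compactly supported mollifier, still dominate the grand maximal function (a Gaussian-decay lemma in the Fefferman--Stein style), and (b) the two-sided passage between atoms of $H^p$/$h^p$ and molecules of $\dot{F}^0_{p2}$/$F^0_{p2}$ for $p\le1$ together with the Carleson-measure machinery for $\mathrm{BMO}$ and $\mathrm{bmo}$ --- the genuine technical heart, which is what forces the appeal to the $\varphi$-transform and to $H^1$--$\mathrm{BMO}$ duality.
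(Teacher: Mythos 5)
The paper does not actually prove Theorem~\ref{thm:160118-5}; it is stated in the introduction as motivation, and the Historical Notes section merely attributes the five identifications to the literature (Littlewood--Paley for \eqref{eq:160118-1}, Peetre for \eqref{eq:160118-2}, Bui and Goldberg for \eqref{eq:160118-3}, Triebel for \eqref{eq:160118-4}, Goldberg for \eqref{eq:160118-5}). So there is no in-text argument to compare your proposal against; what you have done is reconstruct the standard proof strategy from those sources, and the reconstruction is sound.

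A few remarks on the correctness of the outline. Item~(1): the vector-valued Calder\'on--Zygmund/Mikhlin argument for $\|f\|_{\dot F^0_{p2}}\lesssim\|f\|_p$, and the converse via the Calder\'on reproducing formula, randomization (Khintchine), and the same multiplier theorem, is exactly the classical Littlewood--Paley proof; the passage to $F^0_{p2}$ via the extra low-frequency block is trivial when $1<p<\infty$. Items~(2)--(3): your plan --- replace the heat maximal function by the grand maximal function, pass to a square-function characterization, and close the loop for $p\le1$ by matching $H^p$-atoms with $\dot F^0_{p2}$-molecules via the $\varphi$-transform (and the local versions for $h^p$ and $F^0_{p2}$) --- is the Fefferman--Stein/Peetre/Frazier--Jawerth/Goldberg route and is correct. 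Items~(4)--(5): the duality shortcut $(\dot F^0_{12})^*=\dot F^0_{\infty2}$ combined with item~(2) and $(H^1)^*={\rm BMO}$ (and the local analogue $(h^1)^*={\rm bmo}$) is the fastest legitimate path; the direct Carleson-measure argument is the alternative. You correctly flag the two genuine technical issues: first, a Gaussian-decay lemma showing the heat maximal function is comparable to a grand maximal function built from Schwartz test functions (the Gaussian is not compactly supported, so this is not literally a truncation argument); second, the fact that $\dot F^0_{\infty2}$ cannot be defined by the naive $L^\infty(\ell^2)$ quantity and must be given by a Carleson-type sup over cubes --- a point that in fact goes beyond the definition of $\dot F^s_{pq}$ given in Section~\ref{section:4.2}, which is stated only for $p<\infty$. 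None of these gaps invalidate the plan; they simply mark where the cited literature does the work the note itself declines to carry out.
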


\subsubsection{\bf Quantity and quality of functions}

We can easily grasp
the meaning of the parameters $p$ and $s$
in Besov spaces and Triebel-Lizorkin spaces.
As a result,
we can deal with 
the quanitity of functions
and 
the quality of functions
separately.

\subsection{Homogeneous and nonhomogeneous spaces}

Next, we move on to the 
homogeneous spaces
and the 
nonhomogeneous spaces.
Roughly speaking,
homogeneous spaces are function spaces
described by a set of partial derivatives
of the same order;
otherwise the space is nonhomogeneous.

\begin{example}
Let $m \in {\mathbb N}$ and $1 \le p \le \infty$.
\begin{enumerate}
\item
The {\it homogeneous Sobolev norm}
$\displaystyle
\|f\|_{\dot{W}^{m,p}}
\equiv
\sum_{|\alpha|=m}\|\partial^\alpha f\|_p
$
is homogeneous.
\item
The {\it nonhomogeneous Sobolev norm}
$\displaystyle
\|f\|_{W^{m,p}}
\equiv
\sum_{|\alpha| \le m}\|\partial^\alpha f\|_p
$
is nonhomogeneous.
\end{enumerate}
\end{example}

Concerning the homogeneous norms,
a couple of helpful remarks may be in order.
\begin{remark}
Since
differentiation annihilates the polynomials
or decreases the order of the polynomials,
the homogenous norms do not have complete
information of the function $f$.
\end{remark}

Despite this remark,
we have the following good properties.
\begin{enumerate}
\item
We can not use the nonhomogeneous norm
to describe some properties of functions.
For example,
the dilation $f \mapsto f(t\cdot)$ is a typical one.
\item
Although the matters depends
on the equations we consider,
some invariant quantities can be realized
as a sum of homogeneous norms.
We need to handle each term elaborately.
\end{enumerate}

\subsection{Notation}

We use the following notation in this note.

\notation\
\begin{enumerate}
\item
The metric ball defined by $\ell^2$ is usually called a {\it ball}.
We denote by $B(x,r)$
{\it the ball centered at $x$ of radius $r$}.
Given a ball $B$,
we denote by $c(B)$ its {\it center} and by $r(B)$ its {\it radius}.
We write $B(r)$ instead of $B(0,r)$,
where $0 \equiv (0,0,\ldots,0)$.
\item
Let $E$ be a measurable set.
Then, we denote its {\it indicator function}
by $\chi_E$. \index{chi@$\chi$}
If $E$ has positive measure and $E$ is integrable over $f$,
Then denote by $m_E(f)$ the {\it average of $f$} over $E$.
The symbol $|E|$ denotes the {\it volume of $E$}.
\item
Let $A,B \ge 0$.
Then, $A \lesssim B$ and $B \gtrsim A$ mean
that there exists a constant $C>0$
such that $A \le C B$,
where $C$ depends only on the parameters
of importance.
The symbol $A \sim B$ means
that $A \lesssim B$ and $B \lesssim A$
happen simultaneously.
While $A \simeq B$ means that there exists
a constant $C>0$ such that $A=C B$.
\item
We define
\begin{equation}
{\mathbb N}\equiv \{1,2,\ldots\}, \quad
{\mathbb Z}\equiv \{0,\pm 1, \pm 2,\ldots\}, \quad
{\mathbb N}_0\equiv \{0,1,\ldots\}.
\end{equation}
\item
For $a \in {\mathbb R}^n$,
we write
$\langle a \rangle\equiv \sqrt{1+|a|^2}$.
\item
Suppose
that $\{f_j\}_{j=1}^\infty$ is a sequence of measurable functions.
Then we write
\begin{equation}
\| f_j \|_{L^p(\ell^q)}
\equiv
\left(
\int_{{\mathbb R}^n}
\left(
\sum_{j=1}^\infty |f_j(x)|^q
\right)^\frac{p}{q}
\,d x
\right)^\frac{1}{p}, \quad
0< p,q \le \infty
\end{equation}
and
\begin{equation}
\| f_j \|_{\ell^q(L^p)}
\equiv
\left(
\sum_{j=1}^\infty
\left(
\int_{{\mathbb R}^n}
|f_j(x)|^p
\,d x
\right)^\frac{q}{p}
\right)^\frac{1}{q}, \quad
0< p,q \le \infty
\end{equation}
\item
A distribution $f \in {\mathcal S}'$
is said to belong to $L^1_{\rm loc}$
if
there exist constants $C>0$ and $N \in {\mathbb N}$
and $F \in L^1_{\rm loc}$
such that
\[
|\langle f,\varphi \rangle|
=
\left|\int_{{\mathbb R}^n}F(x)\varphi(x)\,dx\right|
\le C
\sum_{|\alpha| \le N}
\sup_{x \in {\mathbb R}^n}
(1+|x|)^N|\partial^\alpha \varphi(x)|
\]
for all $\varphi \in C^\infty_{\rm c}$.
In this case one writes
$f \in L^1_{\rm loc} \cap {\mathcal S}'$,
$F \in L^1_{\rm loc} \cap {\mathcal S}'$,
$f \in {\mathcal S}' \cap L^1_{\rm loc}$
or
$F \in {\mathcal S}' \cap L^1_{\rm loc}$.
\end{enumerate}

\section{Schwartz spaces}

\subsection{Definitions--as sets and as topological spaces}

\subsubsection{\bf The Schwartz space ${\mathcal S}$ and its dual ${\mathcal S}'$}

Here we recall the {\it Schwartz space}
${\mathcal S}$ together with its topology.

\begin{definition}
The {\it Schwartz space}
${\mathcal S}$ is the subspace
of $C^\infty$ given by
$$\displaystyle
{\mathcal S}
=\bigcap_{N=1}^\infty
\left\{f \in C^\infty\,:\,
\sum_{|\alpha| \le N}\sup_{x \in {\mathbb R}^n}
\langle x \rangle^N|\partial^\alpha f(x)|
<\infty \right\}.
$$
The topology of ${\mathcal S}$
is the weakest one for which
the mapping
$f \in {\mathcal S} \mapsto p_N(f) \in {\mathbb R}$
is continuous for all $N \in {\mathbb N}$,
where
\[
p_N(f) \equiv
\sum_{|\alpha| \le N}\sup_{x \in {\mathbb R}^n}
\langle x \rangle^N|\partial^\alpha f(x)|, \quad
\langle x \rangle \equiv \sqrt{1+|x|^2}.
\]
\end{definition}

We admit the following fact:
\begin{theorem}{\rm \cite[p. 249, Th\'{e}or\`{e}me XII]{Schwartz-text}}
\label{thm:160118-4}
The Fourier transform
maps ${\mathcal S}$ into ${\mathcal S}$
isomorphically.
\end{theorem}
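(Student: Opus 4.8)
The plan is to verify, in order, three properties: that ${\mathcal F}$ maps ${\mathcal S}$ into ${\mathcal S}$, that this map is continuous, and that it is bijective with a continuous inverse; the conjunction of the three is precisely the assertion that ${\mathcal F}$ is an isomorphism of topological vector spaces.

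First I would record the two elementary identities, valid for $f\in{\mathcal S}$ and multi-indices $\alpha,\beta$,
\[
\partial^\beta({\mathcal F}f)(\xi)={\mathcal F}\bigl((-ix)^\beta f\bigr)(\xi),
\qquad
\xi^\alpha\,{\mathcal F}g(\xi)=(-i)^{|\alpha|}\,{\mathcal F}\bigl(\partial^\alpha g\bigr)(\xi),
\]
obtained respectively by differentiating under the integral sign and by integrating by parts; both manipulations are legitimate because $\partial^\gamma\bigl((\,\cdot\,)^\delta f\bigr)\in L^1$ whenever $f\in{\mathcal S}$. Combining them gives
\[
\bigl|\xi^\alpha\partial^\beta({\mathcal F}f)(\xi)\bigr|
=\bigl|{\mathcal F}\bigl(\partial^\alpha((-ix)^\beta f)\bigr)(\xi)\bigr|
\le C_n\,\bigl\|\partial^\alpha\bigl((\,\cdot\,)^\beta f\bigr)\bigr\|_1 .
\]
Using $\langle\xi\rangle^N\lesssim\sum_{|\alpha|\le N}|\xi^\alpha|$ together with the crude bound $\|g\|_1\le C_n\,p_{n+1}(g)$ and the Leibniz rule, one deduces an estimate of the form $p_N({\mathcal F}f)\le C\,p_{M}(f)$ with $M=M(N,n)$. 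This single family of inequalities shows at once that ${\mathcal F}f\in{\mathcal S}$ and, since the topology of ${\mathcal S}$ is the one generated by the seminorms $p_N$, that ${\mathcal F}\colon{\mathcal S}\to{\mathcal S}$ is continuous.

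The core of the argument is the Fourier inversion formula ${\mathcal F}^{-1}{\mathcal F}f=f$ for $f\in{\mathcal S}$, where ${\mathcal F}^{-1}g(x)\equiv\frac{1}{\sqrt{(2\pi)^n}}\int_{{\mathbb R}^n}g(\xi)e^{ix\xi}\,d\xi$. The key input is the Fourier transform of a Gaussian, ${\mathcal F}\bigl(e^{-t|\,\cdot\,|^2/2}\bigr)(\xi)=t^{-n/2}e^{-|\xi|^2/(2t)}$ for $t>0$, which I would derive by reducing to $n=1$ and solving the first-order ODE $\varphi'(\xi)=-\xi\,\varphi(\xi)$, $\varphi(0)=1$, satisfied by $\varphi(\xi)={\mathcal F}(e^{-x^2/2})(\xi)$. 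Inserting the convergence factor $e^{-t|\xi|^2/2}$ into the integral defining ${\mathcal F}^{-1}{\mathcal F}f(x)$, invoking Fubini's theorem (now justified thanks to the Gaussian) and then dominated convergence as $t\downarrow 0$ (using ${\mathcal F}f\in{\mathcal S}\subset L^1$), evaluating the resulting Gaussian integral, one identifies ${\mathcal F}^{-1}{\mathcal F}f$ with the limit of the Gaussian mollifications $(2\pi t)^{-n/2}e^{-|\,\cdot\,|^2/(2t)}*f$, which converges to $f$. This Gaussian-regularization step, together with the interchange-of-limits justifications it requires, is the one place where genuine analytic care is needed; everything else is bookkeeping with the $p_N$.

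Finally, since ${\mathcal F}^{-1}$ has exactly the same structure as ${\mathcal F}$ — only the sign of the exponent differs — the first two steps apply verbatim to show ${\mathcal F}^{-1}\colon{\mathcal S}\to{\mathcal S}$ is continuous, and the symmetric computation gives ${\mathcal F}{\mathcal F}^{-1}g=g$. Hence ${\mathcal F}$ is a continuous linear bijection of ${\mathcal S}$ onto itself whose inverse is continuous, i.e.\ a topological isomorphism, which is Theorem \ref{thm:160118-4}.
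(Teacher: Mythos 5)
Your proof is correct. The paper itself gives no argument here --- it states that the proof is well known, omits it, and cites Schwartz's textbook --- so there is nothing to compare against; what you have written is the standard proof (exchange of differentiation and polynomial multiplication under $\mathcal F$ to get the seminorm estimates $p_N(\mathcal F f)\le C\,p_M(f)$, then Fourier inversion via Gaussian regularization, then symmetry of $\mathcal F^{-1}$), and all the steps, including the justifications for Fubini and dominated convergence, are sound.
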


The proof is well known and omitted.

Now we move on to the topological dual space
${\mathcal S}'$.
\begin{definition}
One defines
\[
{\mathcal S}'\equiv 
\{f:{\mathcal S} \mapsto {\mathbb C}\,:\,
f\mbox{ is linear and continuous }\}.
\]
One equips
${\mathcal S}'$
with the weakest topology
so that the mapping
\[
f \in {\mathcal S}' \mapsto \langle f,\varphi \rangle
\in {\mathbb C}
\]
is continuous for all $\varphi \in {\mathcal S}$.
\end{definition}

We recall the following fundamental characterization
of ${\mathcal S}'$ for later considerations.

\begin{lemma}\label{lem:151211-1}
For all $f \in {\mathcal S}'$
there exists $N \in {\mathbb N}$ such that
\[
|\langle f,\varphi \rangle| \le N p_N(\varphi)
\]
for all $\varphi \in {\mathcal S}$.
\end{lemma}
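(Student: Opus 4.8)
The plan is to argue by contradiction using the fact that ${\mathcal S}$ is metrizable and that its topology is generated by the increasing sequence of seminorms $p_1 \le p_2 \le \cdots$. Suppose $f \in {\mathcal S}'$ but no such $N$ exists. Then for every $N \in {\mathbb N}$ the bound $|\langle f,\varphi\rangle| \le N\,p_N(\varphi)$ fails for some $\varphi$, so there is $\varphi_N \in {\mathcal S}$ with $|\langle f,\varphi_N\rangle| > N\,p_N(\varphi_N)$. In particular $\langle f,\varphi_N\rangle \ne 0$, hence $p_N(\varphi_N) \ne 0$; rescaling, set $\psi_N \equiv \varphi_N/(N\,p_N(\varphi_N))$, so that $p_N(\psi_N) = 1/N$ while $|\langle f,\psi_N\rangle| > 1$.

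Next I would check that $\psi_N \to 0$ in ${\mathcal S}$. Fix $M \in {\mathbb N}$. For all $N \ge M$ the monotonicity $p_M \le p_N$ gives $p_M(\psi_N) \le p_N(\psi_N) = 1/N \to 0$ as $N \to \infty$. Since convergence in ${\mathcal S}$ means $p_M(\psi_N) \to 0$ for each fixed $M$ (the topology being the weakest making all $p_M$ continuous, equivalently the metric topology from $\sum_M 2^{-M}\frac{p_M}{1+p_M}$), we conclude $\psi_N \to 0$ in ${\mathcal S}$.

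Now continuity of $f$ on ${\mathcal S}$ forces $\langle f,\psi_N\rangle \to \langle f,0\rangle = 0$, which flatly contradicts $|\langle f,\psi_N\rangle| > 1$ for all $N$. Hence the assumption was false and the desired $N$ exists. The only mildly delicate point — the step I would be most careful about — is the passage ``$p_M(\psi_N) \to 0$ for every $M$ implies $\psi_N \to 0$ in ${\mathcal S}$''. This is exactly the characterization of convergent sequences in the locally convex topology defined by a countable family of seminorms, and it is where metrizability of ${\mathcal S}$ is used; everything else is the standard rescaling trick. One should also note the harmless relabeling: the statement as written asks for the bound with the \emph{same} constant $N$ in front as the index of the seminorm, which is why we used the factor $N$ (rather than an arbitrary $C$) in choosing $\varphi_N$; any continuity estimate $|\langle f,\varphi\rangle| \le C\,p_M(\varphi)$ yields the stated form by taking $N \ge \max(C,M)$ and using $p_M \le p_N$.
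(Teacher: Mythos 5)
Your proof is correct, but it takes a genuinely different route from the paper's. The paper argues directly: since $f$ is continuous at $0$, the preimage $f^{-1}(\{|z|<1\})$ is open, hence contains a basic neighborhood of the form $\{p_N<1/N\}$ (using that the $p_N$ increase, so finite intersections of seminorm balls collapse to a single one), and then homogeneity of $p_N$ and linearity of $f$ upgrade the inclusion to the estimate $|\langle f,\varphi\rangle|\le N\,p_N(\varphi)$. You instead argue by contradiction, extracting a sequence $\psi_N$ with $p_N(\psi_N)=1/N$ and $|\langle f,\psi_N\rangle|>1$, and then use monotonicity of the seminorms plus sequential continuity to derive a contradiction. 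Both use the same two structural facts, namely that $p_M\le p_N$ for $M\le N$ and the scaling trick, so the content is the same; the paper's version is slightly more economical and avoids passing through sequences, while yours makes the mechanism (a violating sequence converging to $0$) very explicit. One small remark: you flag metrizability as the delicate point, but you never actually need it — the equivalence ``$\psi_N\to 0$ in $\mathcal{S}$ iff $p_M(\psi_N)\to 0$ for each $M$'' holds for any locally convex topology generated by a family of seminorms, and continuity always implies sequential continuity; metrizability would only be needed for the converse direction, which you do not use.
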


\begin{proof}
Since $f$ is continuous at $0$,
\[
\{\varphi \in {\mathcal S}\,:\,|\langle f,\varphi \rangle|<1\}
=
f^{-1}(\{z \in {\mathbb C}\,:\,|z|<1\})
\]
is an open set.
Thus there exists $N \in {\mathbb N}$
and $\delta>0$ such that
\[
\left\{\varphi \in {\mathcal S}\,:\,
p_N(\varphi)<\frac{1}{N}\right\}
\subset
\{\varphi \in {\mathcal S}\,:\,
|\langle f,\varphi \rangle|<1\},
\]
implying
$|\langle f,\varphi \rangle| \le N p_N(\varphi)$
for all $\varphi \in {\mathcal S}$.
\end{proof}

\subsubsection{\bf The Schwartz space ${\mathcal S}_\infty$ and its dual ${\mathcal S}_\infty'$}

The space ${\mathcal S}$ and its dual ${\mathcal S}'$
are the ingredients of
the nonhomogeneous Besov space
$B^s_{pq}$ for $0<p,q \le \infty$ and $s \in {\mathbb R}$,
while for the homogeneous Besov space $B^s_{pq}$ 
with $0<p,q \le \infty$ and $s \in {\mathbb R}$
we need ${\mathcal S}_\infty$
and its dual ${\mathcal S}_\infty'$.

\begin{definition}
One defines
$\displaystyle
{\mathcal S}_\infty
\equiv
\bigcap_{\alpha \in {\mathbb N}_0{}^n}
\left\{
\psi \in {\mathcal S}\,:\,
\int_{{\mathbb R}^n}x^\alpha\psi(x)\,dx=0
\right\}.
$
\end{definition}

The main advantage of defining the class
${\mathcal S}_\infty$ is that
when we are given $\varphi \in {\mathcal S}_\infty$
the function given by
\[
\psi
={\mathcal F}^{-1}[|\xi|^\alpha \cdot {\mathcal F}\varphi]
\]
is in ${\mathcal S}_\infty$.
In fact, for $\varphi \in {\mathcal S}$
$\varphi \in {\mathcal S}_\infty$
if and only if
${\mathcal F}\varphi$ vanishes
up to the arbitrary order at $0$.

\begin{definition}
\
\begin{enumerate}
\item
Equip
${\mathcal S}'_\infty$
with the weakest topology 
so that the evaluation
\[
f \in {\mathcal S}' \mapsto \langle f,\varphi \rangle
\in {\mathbb C}
\]
is continuous for all $\varphi \in {\mathcal S}_\infty$.
\item
Denote by ${\mathcal P}$
the set of all polynomials.
Embed ${\mathcal P}$
into ${\mathcal S}'$ cannonically;
for any $\alpha$ the mononomial
$x^\alpha$ stands for the distribution
\[
\varphi \in {\mathcal S}
\mapsto
\int_{{\mathbb R}^n} x^\alpha \varphi(x)\,dx \in {\mathbb C}.
\]
\end{enumerate}
\end{definition}

Recall that we can endow the quotient $X/\sim$
of a topological space $(X,{\mathcal O}_X)$
and its equivalence relation $\sim$ with a natural topology.

\begin{definition}
Let $(X,{\mathcal O}_X)$ be a topological space.
Denote by $\sim$ the equivalence relation;
\begin{enumerate}
\item
$a \sim a$,
\item
$a \sim b$ implies $b \sim a$,
\item
$a \sim b$ and $b \sim c$ imply $a \sim c$.
\end{enumerate}
\end{definition}

Let $p$ be a projection
$X$ to $X/\sim$.
The strongest topology of $X/\sim$
for which $p$ is continuous is called
the {\it quotient topology} of $X/\sim$.
Remark that 
${\mathcal S}'/{\mathcal P}$ is a quotient space;
$[f]=[g]$ for $f,g \in {\mathcal S}'$
if and only if
$f-g \in {\mathcal P}$.

\subsection{A fundamental theorem}

We shall prove the following theorem:
\begin{theorem}\label{thm:151209-1}
\
\begin{enumerate}
\item
For all $f \in {\mathcal S}_\infty'$
there exists $F \in {\mathcal S}'$
such that $F|{\mathcal S}_\infty=f$.
\item
The linear spaces
${\mathcal S}'/{\mathcal P}$
and
${\mathcal S}_\infty$ are isomorphic.
\item
The mapping
$R:F \in {\mathcal S}' \mapsto F|{\mathcal S}_\infty
\in {\mathcal S}_\infty'$
is continuous.
\item
The mapping
$R:F \in {\mathcal S}' \mapsto F|{\mathcal S}_\infty
\in {\mathcal S}_\infty'$
is open,
that is,
$R(U)$ is an open set in ${\mathcal S}_\infty'$
for all open sets $U \subset {\mathcal S}'$.
\end{enumerate}
\end{theorem}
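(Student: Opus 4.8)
The plan is to derive all four assertions from the Hahn--Banach theorem and from the classical fact that a tempered distribution whose Fourier transform is supported at $\{0\}$ is a polynomial. For assertion (1), let $f\in\mathcal{S}_\infty'$ and equip $\mathcal{S}_\infty$ with the subspace topology from $\mathcal{S}$, which is generated by the restrictions $p_N|_{\mathcal{S}_\infty}$; the argument used for Lemma~\ref{lem:151211-1} then applies verbatim and produces $N\in\mathbb{N}$ with $|\langle f,\varphi\rangle|\le N p_N(\varphi)$ for all $\varphi\in\mathcal{S}_\infty$. Since $N p_N$ is a continuous seminorm on the whole of $\mathcal{S}$, the dominated-extension form of the Hahn--Banach theorem gives a linear $F\colon\mathcal{S}\to\mathbb{C}$ with $|\langle F,\varphi\rangle|\le N p_N(\varphi)$ for every $\varphi\in\mathcal{S}$; hence $F\in\mathcal{S}'$ and $F|\mathcal{S}_\infty=f$, so $R$ is surjective. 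Assertion (3) is then immediate: for fixed $\varphi\in\mathcal{S}_\infty\subset\mathcal{S}$ the functional $F\mapsto\langle RF,\varphi\rangle=\langle F,\varphi\rangle$ is continuous on $\mathcal{S}'$ by the definition of its topology, and the topology of $\mathcal{S}_\infty'$ is the initial topology for precisely these functionals.

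For assertion (2) it remains to identify $\ker R=\{F\in\mathcal{S}':F|\mathcal{S}_\infty=0\}$ with $\mathcal{P}$. The inclusion $\mathcal{P}\subseteq\ker R$ is the very definition of $\mathcal{S}_\infty$ together with the canonical embedding $\mathcal{P}\hookrightarrow\mathcal{S}'$. Conversely, suppose $F\in\mathcal{S}'$ with $F|\mathcal{S}_\infty=0$. Since $\int_{\mathbb{R}^n}x^\alpha\psi(x)\,dx$ is a constant multiple of $\partial^\alpha(\mathcal{F}\psi)(0)$, one has $\mathcal{F}(\mathcal{S}_\infty)=\{\phi\in\mathcal{S}:\partial^\alpha\phi(0)=0\ \text{for all }\alpha\}$, and therefore $\langle\mathcal{F}F,\phi\rangle=\langle F,\mathcal{F}\phi\rangle=0$ for every Schwartz function $\phi$ vanishing to infinite order at the origin; in particular $\mathcal{F}F$ kills every test function supported away from $0$, so $\mathrm{supp}\,\mathcal{F}F\subseteq\{0\}$. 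Hence $\mathcal{F}F=\sum_{|\alpha|\le M}c_\alpha\partial^\alpha\delta$ for some $M$ and constants $c_\alpha$, and $F$ is the associated polynomial. Thus $\ker R=\mathcal{P}$, and combined with (1) the map $R$ induces a linear isomorphism $\mathcal{S}'/\mathcal{P}\cong\mathcal{S}_\infty'$.

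The main point is assertion (4). By linearity and surjectivity of $R$ it suffices to show that $R$ carries each basic neighbourhood $U=\{F\in\mathcal{S}':|\langle F,\varphi_i\rangle|<\varepsilon,\ 1\le i\le k\}$ of $0$, with $\varphi_1,\dots,\varphi_k\in\mathcal{S}$, onto a neighbourhood of $0$ in $\mathcal{S}_\infty'$; a routine translation argument then upgrades this to openness on arbitrary open sets. Write $V=\mathrm{span}\{\varphi_1,\dots,\varphi_k\}$ and split $V=(V\cap\mathcal{S}_\infty)\oplus V_0$ with $\dim V_0=\ell$. Since $V_0\cap\mathcal{S}_\infty=\{0\}$, the moment functionals $\psi\mapsto\int_{\mathbb{R}^n}x^\alpha\psi(x)\,dx$ span the dual of $V_0$, so one may choose multi-indices $\alpha_1,\dots,\alpha_\ell$ and a basis $e_1,\dots,e_\ell$ of $V_0$ with $\int_{\mathbb{R}^n}x^{\alpha_i}e_j(x)\,dx=\delta_{ij}$. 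Decompose $\varphi_i=\varphi_i^\infty+v_i$ with $\varphi_i^\infty\in V\cap\mathcal{S}_\infty$ and $v_i\in V_0$. Given $f\in\mathcal{S}_\infty'$, pick an extension $F_0\in\mathcal{S}'$ by (1) and set $F\equiv F_0-\sum_{i=1}^\ell\langle F_0,e_i\rangle x^{\alpha_i}\in\mathcal{S}'$. Subtracting a polynomial leaves the restriction to $\mathcal{S}_\infty$ unchanged, so $RF=f$; furthermore $\langle F,e_j\rangle=0$ for every $j$, hence $\langle F,v_i\rangle=0$, and so $\langle F,\varphi_i\rangle=\langle F,\varphi_i^\infty\rangle=\langle f,\varphi_i^\infty\rangle$ for each $i$. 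Therefore, whenever $f$ lies in the neighbourhood $\{g\in\mathcal{S}_\infty':|\langle g,\varphi_i^\infty\rangle|<\varepsilon,\ 1\le i\le k\}$ of $0$ in $\mathcal{S}_\infty'$, the corresponding $F$ belongs to $U$, so that $f\in R(U)$; thus $R(U)$ contains this neighbourhood, as required.

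The step I expect to cost the most effort is assertion (4): the natural attempt to construct a \emph{global} continuous linear section of $R$ founders on the fact that $\mathcal{S}_\infty$ has infinite codimension in $\mathcal{S}$, and the remedy is to notice that any one basic neighbourhood involves only finitely many test functions, so the correcting polynomial can be produced by finite-dimensional linear algebra inside $V$. The only other genuinely non-formal ingredient is the structure theorem for distributions supported at a single point, used to pin down $\ker R$.
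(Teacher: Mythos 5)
Your arguments for (1)--(3) are in substance the paper's own: Hahn--Banach dominated extension for surjectivity, the Fourier transform together with the classification of distributions supported at the origin (Theorem~\ref{thm:151211-1}) to identify $\ker R=\mathcal{P}$, and the definition of the initial topology for continuity. The paper phrases (1) by passing through the normed completions $\mathcal{V}_N=\overline{\mathcal{S}_\infty}{}^{p_N}\subset\mathcal{W}_N=\overline{\mathcal{S}}{}^{p_N}$, but that is a cosmetic variation on the same extension.

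Your proof of (4), however, takes a genuinely different and noticeably simpler route than the paper's. The paper proves openness through Lemma~\ref{lem:151209-1}, a compactness argument on $S^{2K-1}$ designed to show that the classes $[\psi_1],\dots,[\psi_K]$ remain linearly independent in $\mathcal{W}_N/\mathcal{V}_N$ for some $N$, so that successive Hahn--Banach extensions can be made to satisfy $\langle G,\psi_k\rangle=0$ while preserving the restriction to $\mathcal{V}_N$. You bypass the $\mathcal{V}_N$, $\mathcal{W}_N$ apparatus entirely: extend $f$ to an arbitrary $F_0\in\mathcal{S}'$ via (1), then \emph{correct} it by subtracting a polynomial $\sum_i\langle F_0,e_i\rangle x^{\alpha_i}$ that annihilates the finite-dimensional complement $V_0$ of $V\cap\mathcal{S}_\infty$ inside $V=\mathrm{span}\{\varphi_1,\dots,\varphi_k\}$. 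Since polynomials vanish on $\mathcal{S}_\infty$, the correction does not change $RF$, and the existence of the required dual basis $(\alpha_i, e_j)$ is just finite-dimensional linear algebra once one notes that the moment functionals separate points of $V_0$ (because $\mathcal{S}_\infty$ is exactly their common kernel and $V_0\cap\mathcal{S}_\infty=\{0\}$). This replaces the paper's compactness lemma with a purely algebraic observation and makes the structure of the openness argument transparent; the paper's version, by carrying the $p_N$-bounds through each extension, keeps somewhat more explicit quantitative control, which your argument does not need. The concluding reduction from arbitrary open sets to basic neighbourhoods of $0$ by translation is standard and correctly invoked.
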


\subsubsection{\bf Remarks on Theorem \ref{thm:151209-1} and its proof}

Here we collect some facts for the proof of Theorem \ref{thm:151209-1}.
\begin{remark}\
\begin{enumerate}
\item
A direct consequence of this theorem is that
\[
[f] \in {\mathcal S}'/{\mathcal P}
\mapsto
f|{\mathcal S}_\infty \in {\mathcal S}'_\infty
\]
is a topological isomorphism.
\item
The continuity of $R$ is clear
from the definition.
\item
Since
${\mathcal S}'$ is NOT metrizable,
one can not apply the Baire category theorem.
It seems that there is no literature
that allows us to apply a version
of the Baire category theorem.
\item
It seems that one can not find
the proof of the openness of $R$
in any literature
before \cite{NNS15}.
The proof of $(1)$ and $(2)$ is known
in the book \cite[5.1.]{Triebel-text-83}.
Yuan, Sickel and Yang proved $(3)$ and $(4)$
 \cite[Proposition 8.1]{YSY10-2}.
However, there was a gap.
They used the closed graph theorem.
However, it seems unclear that
one can use the closed graph theorem.
The proof in this note is essentially
to close the gap of $(4)$
in the proof of \cite[Proposition 8.1]{YSY10-2}.
\item
The idea for the proof is 
to extend many functionals we deal with
to 
${\mathcal V}_N$ or
${\mathcal W}_N$ carefully,
where
\begin{align}
\label{eq:151224-13}
{\mathcal V}_N
&\equiv 
\overline{{\mathcal S}_\infty}{}^{p_N}
\subset
C^N\\
\label{eq:151224-14}
{\mathcal W}_N
&\equiv 
\overline{{\mathcal S}}{}^{p_N}
\subset
C^N.
\end{align}
\end{enumerate}
\end{remark}

\subsubsection{\bf $\ker R$}

We specify $\ker R$ here.
The following lemma is fundamental.
\begin{theorem}\label{thm:151211-1}
Let $f \in {\mathcal S}'$.
Then the following are equivalent.
\begin{enumerate}
\item
${\rm supp}(f) \subset \{0\}$.
\item
$f$ is expressed as the following finite sum:
$$\displaystyle
f=\sum_{\lambda \in \Lambda}
c_\lambda \partial^\lambda \delta_0,
$$
where
$\Lambda \subset {\mathbb N}_0{}^n$
is a finite set
and
$\partial^\lambda \delta_0$
denotes the distribution defined by
$\langle \partial^\lambda \delta_0,\varphi \rangle
 \equiv
(-1)^{|\lambda|}\partial^\lambda \varphi(0)$
for $\varphi \in {\mathcal S}$.
\end{enumerate}
\end{theorem}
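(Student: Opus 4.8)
The plan is to dispatch $(2)\Rightarrow(1)$ in one line and then spend the real effort on $(1)\Rightarrow(2)$, which is the classical structure theorem for distributions supported at a point. For $(2)\Rightarrow(1)$: each $\partial^\lambda\delta_0$ is supported in $\{0\}$, and a finite sum of distributions supported in $\{0\}$ is again supported in $\{0\}$, so there is nothing to do.

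For $(1)\Rightarrow(2)$, the first step is to pin down the order of $f$. By Lemma~\ref{lem:151211-1} there exists $N\in{\mathbb N}$ with $|\langle f,\varphi\rangle|\le N\,p_N(\varphi)$ for all $\varphi\in{\mathcal S}$; this $N$ will bound the orders of the derivatives of $\delta_0$ that appear. The heart of the proof is the claim that \emph{if $\varphi\in{\mathcal S}$ vanishes to order $N$ at the origin, i.e.\ $\partial^\alpha\varphi(0)=0$ for every multi-index $\alpha$ with $|\alpha|\le N$, then $\langle f,\varphi\rangle=0$.} To prove it I would fix $\chi\in C^\infty_{\rm c}$ with $\chi\equiv 1$ on a neighborhood of $0$ and set $\chi_\varepsilon(x)\equiv\chi(x/\varepsilon)$ for $\varepsilon\in(0,1)$. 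Since ${\rm supp}(f)\subset\{0\}$ and $\varphi-\chi_\varepsilon\varphi$ vanishes near $0$, we have $\langle f,\varphi\rangle=\langle f,\chi_\varepsilon\varphi\rangle$, so it suffices to show $p_N(\chi_\varepsilon\varphi)\to 0$ as $\varepsilon\to0$. For $|\beta|\le N$, expand $\partial^\beta(\chi_\varepsilon\varphi)$ by the Leibniz rule: a typical summand is $\binom{\beta}{\gamma}\,\partial^\gamma\chi_\varepsilon\,\partial^{\beta-\gamma}\varphi$, where $\|\partial^\gamma\chi_\varepsilon\|_\infty\lesssim\varepsilon^{-|\gamma|}$ and $\partial^\gamma\chi_\varepsilon$ is supported in a ball $B(c\varepsilon)$; on that ball Taylor's theorem for $\partial^{\beta-\gamma}\varphi$ (all of whose derivatives up to order $N-|\beta-\gamma|$ vanish at $0$, by hypothesis) gives $|\partial^{\beta-\gamma}\varphi(x)|\lesssim|x|^{N-|\beta-\gamma|+1}\lesssim\varepsilon^{N-|\beta-\gamma|+1}$. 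Multiplying, each summand is $\lesssim\varepsilon^{N-|\beta|+1}\le\varepsilon$, and since $\langle x\rangle^N$ stays bounded on $B(c\varepsilon)$ we get $p_N(\chi_\varepsilon\varphi)\lesssim\varepsilon\to0$, proving the claim.

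The last step converts the claim into the representation. Given an arbitrary $\varphi\in{\mathcal S}$, let $T_N\varphi(x)\equiv\sum_{|\alpha|\le N}\frac{\partial^\alpha\varphi(0)}{\alpha!}\,x^\alpha$ be its degree-$N$ Taylor polynomial at $0$ and put $\psi\equiv\varphi-\chi\cdot T_N\varphi\in{\mathcal S}$. Near $0$ one has $\psi=\varphi-T_N\varphi$, so $\partial^\alpha\psi(0)=0$ for all $|\alpha|\le N$, whence $\langle f,\psi\rangle=0$ by the claim and hence $\langle f,\varphi\rangle=\sum_{|\alpha|\le N}\frac{\partial^\alpha\varphi(0)}{\alpha!}\,\langle f,\,x^\alpha\chi(x)\rangle$. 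Rewriting $\partial^\alpha\varphi(0)=(-1)^{|\alpha|}\langle\partial^\alpha\delta_0,\varphi\rangle$ and reading off the coefficient of each $\langle\partial^\alpha\delta_0,\varphi\rangle$, this is exactly $f=\sum_{|\alpha|\le N}c_\alpha\,\partial^\alpha\delta_0$ with $c_\alpha\equiv\frac{(-1)^{|\alpha|}}{\alpha!}\langle f,\,x^\alpha\chi(x)\rangle$, a finite sum over $\Lambda=\{\alpha\in{\mathbb N}_0{}^n:|\alpha|\le N\}$, as required.

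The step I expect to be the main obstacle is the limit $p_N(\chi_\varepsilon\varphi)\to0$ inside the claim: one must carefully balance the blow-up $\varepsilon^{-|\gamma|}$ of the derivatives of the rescaled cutoff against the smallness $\varepsilon^{N-|\beta-\gamma|+1}$ furnished by the vanishing of $\varphi$ at the origin, and simultaneously check that the polynomial weight $\langle x\rangle^N$ in $p_N$ does no harm on the shrinking supports $B(c\varepsilon)$. Once that quantitative estimate is organized correctly, everything else is routine bookkeeping with the Leibniz rule and Taylor's formula.
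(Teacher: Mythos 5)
Your proof is correct and follows essentially the same strategy as the paper: fix the order $N$ via Lemma~\ref{lem:151211-1}, show $\langle f,\cdot\rangle$ annihilates Schwartz functions vanishing to sufficiently high order at $0$ by a Leibniz--Taylor estimate of $p_N$ applied to a rescaled-cutoff truncation, then subtract off a smooth copy of the Taylor polynomial of $\varphi$ to read off the representation. The paper's only cosmetic differences are that it subtracts a Gaussian-weighted polynomial $\sum_{|\alpha|\le 2N+1}c_\alpha x^\alpha e^{-|x|^2}$ rather than your $\chi\cdot T_N\varphi$, requires vanishing to order $2N+2$ where your $N+1$ suffices, and phrases the cutoff as $\tau(j\cdot)$ vanishing near $0$ (so one estimates $p_N(\psi-\tau(j\cdot)\psi)$) instead of your $\chi_\varepsilon$ identically $1$ near $0$ --- these are the same argument in different clothing.
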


Via the Fourier transform,
we can prove:
\begin{corollary}
$\ker(R)={\mathcal P}$.
\end{corollary}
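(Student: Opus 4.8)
The plan is to use the Fourier transform to turn the statement into the structure theorem for distributions supported at the origin, i.e.\ Theorem \ref{thm:151211-1}.

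The inclusion $\mathcal{P} \subset \ker(R)$ is immediate: if $P=\sum_\alpha c_\alpha x^\alpha$ is a polynomial, then for every $\varphi \in {\mathcal S}_\infty$ we have $\langle P,\varphi\rangle = \sum_\alpha c_\alpha \int_{{\mathbb R}^n} x^\alpha \varphi(x)\,dx = 0$ by the definition of ${\mathcal S}_\infty$, so $R(P)=P|{\mathcal S}_\infty=0$. For the reverse inclusion, fix $F \in \ker(R)$, so that $\langle F,\varphi\rangle = 0$ for all $\varphi \in {\mathcal S}_\infty$, and set $h \equiv {\mathcal F}^{-1}F \in {\mathcal S}'$; this is legitimate because the Fourier transform is an isomorphism of ${\mathcal S}'$, being the transpose of the isomorphism of Theorem \ref{thm:160118-4}. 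The key point is the equivalence recorded after the definition of ${\mathcal S}_\infty$: for $\psi \in {\mathcal S}$ one has $\psi \in {\mathcal S}_\infty$ iff ${\mathcal F}\psi$ vanishes to infinite order at $0$, which follows from $\partial^\alpha {\mathcal F}\psi(0)$ being a constant multiple of $\int_{{\mathbb R}^n} x^\alpha \psi(x)\,dx$. Hence for any $\psi \in {\mathcal S}$ vanishing to infinite order at $0$ we get ${\mathcal F}^{-1}\psi \in {\mathcal S}_\infty$, and therefore $\langle h,\psi\rangle = \langle F,{\mathcal F}^{-1}\psi\rangle = 0$.

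In particular $h$ annihilates every $\psi \in {\mathcal S}$ (indeed every $\psi \in C^\infty_{\rm c}$) whose support avoids the origin, so ${\rm supp}(h) \subset \{0\}$. By Theorem \ref{thm:151211-1} we may write $h = \sum_{\lambda \in \Lambda} c_\lambda \partial^\lambda \delta_0$ for a finite set $\Lambda \subset {\mathbb N}_0{}^n$ and constants $c_\lambda$. Applying ${\mathcal F}$ and using ${\mathcal F}[\partial^\lambda \delta_0](\xi) = (2\pi)^{-n/2}(i\xi)^\lambda$, we obtain that $F = {\mathcal F}h = (2\pi)^{-n/2}\sum_{\lambda \in \Lambda} c_\lambda (i\,\cdot\,)^\lambda$ is a polynomial, i.e.\ $F \in {\mathcal P}$. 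This yields $\ker(R) \subset {\mathcal P}$ and completes the argument.

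The computations involved are routine; the only places that require attention are lining up the Fourier-transform conventions (the pairing $\langle {\mathcal F}^{-1}F,\psi\rangle = \langle F,{\mathcal F}^{-1}\psi\rangle$ and the formula for ${\mathcal F}[\partial^\lambda \delta_0]$) and using the characterization of ${\mathcal S}_\infty$ as a genuine \emph{equivalence}, so that ${\mathcal F}^{-1}$ carries Schwartz functions vanishing to infinite order at $0$ back into ${\mathcal S}_\infty$. Beyond this bookkeeping I do not anticipate any real obstacle, since Theorem \ref{thm:151211-1} supplies all of the structural content.
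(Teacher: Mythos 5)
Your proof is correct and follows exactly the route the paper indicates by the phrase ``Via the Fourier transform'': transfer $F\in\ker(R)$ to $h={\mathcal F}^{-1}F$, use the characterization of ${\mathcal S}_\infty$ under the Fourier transform to show ${\rm supp}(h)\subset\{0\}$, invoke Theorem \ref{thm:151211-1}, and transform back to see $F$ is a polynomial. You have simply made explicit the details the paper leaves to the reader.
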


\begin{proof}[{\bf Proof of Theorem \ref{thm:151211-1}}]
\
\begin{enumerate}
\item[$\circ$]
Suppose
${\rm supp}(f) \subset \{0\}$.
By Lemma \ref{lem:151211-1},
we can find $N \in {\mathbb N}$ such that
\begin{equation}\label{eq:151224-12}
|\langle f,\varphi \rangle|
\le
N\sum_{|\alpha| \le N}
\sup_{x \in {\mathbb R}^n}
\langle x \rangle^N|\partial^\alpha \varphi(x)|
\end{equation}
for all $\varphi \in {\mathcal S}$.
Choose 
$\{c_\alpha\}_{\alpha \in {\mathbb N}_0{}^n, |\alpha| \le 2N+1}$
such that
\begin{equation}\label{eq:151224-17}
\psi(x)=\varphi(x)-\sum_{|\alpha| \le 2N+1}
c_\alpha x^\alpha e^{-|x|^2}=O(|x|^{2N+2})
\end{equation}
as $x \to 0$.
We claim
\begin{equation}\label{eq:151224-11}
\langle f,\psi \rangle=0.
\end{equation}
Since each $c_\alpha$ depends 
continuously on $\varphi$, more precisely,
\[
c_\alpha=\sum_{\lambda \in \Lambda_\alpha}
k_{\alpha,\lambda}\partial^\lambda \varphi(0)
\]
with $\Lambda_\alpha$ a finite set in ${\mathbb N}_0{}^n$,
we have
\[
\langle f,\varphi \rangle
=
\sum_{|\alpha| \le 2N+1}
\langle f,x^\alpha e^{-|x|^2} \rangle c_\alpha.
\]
\item[$\circ$]
Suppose $f$ is expressed as the following finite sum
as in $(2)$.
Then we have
\[
\langle \partial^\alpha \delta_0,\varphi \rangle
=(-1)^{|\alpha|}
\partial^\alpha \varphi(0)=0.
\]
Thus ${\rm supp}(f) \subset \{0\}$ once we show
(\ref{eq:151224-11}).

Let us show (\ref{eq:151224-11}).
Recall that ${\mathcal V}_N$ is defined by
(\ref{eq:151224-13}).
Let $\tau:{\mathbb R} \to {\mathbb R}$ be a function
such that
$\chi_{(2,\infty)} \le \tau \le \chi_{(1,\infty)}$.
Then 
\[
\lim_{j \to \infty}
p_N(\psi-\tau(j \cdot)\psi)=0
\]
by the Leibniz rule,
(\ref{eq:151224-12}) and (\ref{eq:151224-17}).
Thus,
\[
\langle f,\psi \rangle
=
\lim_{j \to \infty}
\langle f,\tau(j \cdot)\psi \rangle
=0
\]
since $f$ is supported away from the origin.
This proves (\ref{eq:151224-11}).
\end{enumerate}
\end{proof}

\subsubsection{\bf Surjectivity of $R$}

We aim here to show that $R$ is surjective.
To this end we choose
$f \in {\mathcal S}_\infty'$
arbitrarily.
Then similar to Lemma \ref{lem:151211-1},
we can find
$N \in {\mathbb N}$
such that
\begin{equation}\label{eq:151209-1}
|\langle f,\varphi \rangle| \le N p_N(\varphi)
\end{equation}
for all $\varphi \in {\mathcal S}_\infty$.

Thanks to (\ref{eq:151209-1})
$f$ extends uniquely to a countinous linear functional
$g$ on the normed space ${\mathcal V}_N$,
i.e. $f=g|{\mathcal S}_\infty$.
We use the Hahn-Banach theorem
to extend $g$ to ${\mathcal W}_N$ to have
a continuous linear functional $G$
defined on ${\mathcal W}_N$ such that
$G|{\mathcal V}_N=g$ and that
$|\langle G,\Phi \rangle| \le N p_N(\Phi)$
for all $\Phi \in {\mathcal V}_N$.

\subsubsection{\bf Openness of $R$}

Everything hinges on the following observation:
\begin{theorem}\label{thm:151209-9}
Let $\varphi_1,\varphi_2,\ldots,\varphi_N \in {\mathcal S}_\infty$
and
let $\psi_1,\psi_2,\ldots,\psi_K \in {\mathcal S}$.
Assume that
the system
$\{[\psi_1],[\psi_2],\ldots,[\psi_K]\}$
is linearly independent 
in ${\mathcal S}/{\mathcal S}_\infty$.
Let 
\[
U
\equiv
\bigcap_{j=1}^N\{f \in {\mathcal S}_\infty'\,:\,
|\langle f,\varphi_j \rangle|<1\}
\]
be a neighborhood of $0=0_{{\mathcal S}_\infty'}
\in {\mathcal S}_\infty'$
and
\[
V
\equiv
\bigcap_{j=1}^N\{f \in {\mathcal S}'\,:\,
|\langle f,\varphi_j \rangle|<1\}
\cap
\bigcap_{k=1}^K\{f \in {\mathcal S}'\,:\,
|\langle f,\psi_k \rangle|<1\}
\]
a neighborhood of $0=0_{{\mathcal S}'}
\in {\mathcal S}'$.
Then $R(V)=U$.
\end{theorem}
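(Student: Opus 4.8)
The plan is to prove the two inclusions $R(V)\subseteq U$ and $U\subseteq R(V)$ separately; the first is essentially trivial and the second carries all the content. For $R(V)\subseteq U$: if $F\in V$, then $|\langle F,\varphi_j\rangle|<1$ for $j=1,\dots,N$, and since each $\varphi_j$ lies in ${\mathcal S}_\infty$ we have $\langle R(F),\varphi_j\rangle=\langle F,\varphi_j\rangle$, so $R(F)\in U$.

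For the reverse inclusion, fix $f\in U$. By the surjectivity of $R$ — assertion (1) of Theorem \ref{thm:151209-1}, which has already been established in the preceding subsection — choose $F_0\in{\mathcal S}'$ with $R(F_0)=F_0|{\mathcal S}_\infty=f$. This $F_0$ already meets the $\varphi_j$-constraints defining $V$, because $\langle F_0,\varphi_j\rangle=\langle f,\varphi_j\rangle$ and $f\in U$; only $|\langle F_0,\psi_k\rangle|<1$ may fail. The idea is to correct $F_0$ by a polynomial: since $\ker R={\mathcal P}$, replacing $F_0$ by $F_0-P$ with $P\in{\mathcal P}$ changes neither $R(F_0)$ nor the pairings against the $\varphi_j\in{\mathcal S}_\infty$, while it shifts the pairings against the $\psi_k$. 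Hence it suffices to find $P\in{\mathcal P}$ with $\langle P,\psi_k\rangle=\langle F_0,\psi_k\rangle$ for every $k$; then $F\equiv F_0-P$ lies in ${\mathcal S}'$, satisfies $R(F)=f$, and belongs to $V$ (indeed $\langle F,\psi_k\rangle=0$ for all $k$), so that $f=R(F)\in R(V)$.

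The only genuine point is therefore the surjectivity of the linear map $T\colon{\mathcal P}\to{\mathbb C}^K$ given by $T(P)=(\langle P,\psi_1\rangle,\dots,\langle P,\psi_K\rangle)$. I would prove this by duality in the finite-dimensional space ${\mathbb C}^K$: if the range of $T$ were a proper subspace, there would be a nonzero $(a_1,\dots,a_K)\in{\mathbb C}^K$ with $\sum_{k=1}^K a_k\langle P,\psi_k\rangle=0$ for all $P\in{\mathcal P}$, i.e. $\int_{{\mathbb R}^n}P(x)\bigl(\sum_{k=1}^K a_k\psi_k(x)\bigr)\,dx=0$ for all polynomials $P$. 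Letting $P$ run through the monomials $x^\alpha$ shows that every moment of $\sum_k a_k\psi_k$ vanishes, i.e. $\sum_k a_k\psi_k\in{\mathcal S}_\infty$, contradicting the hypothesis that $[\psi_1],\dots,[\psi_K]$ are linearly independent in ${\mathcal S}/{\mathcal S}_\infty$. Thus $T$ is onto and the argument closes. I do not expect a serious obstacle anywhere; the points to keep in mind are that a polynomial genuinely defines a tempered distribution lying in $\ker R$ (both recorded in the setup preceding the theorem) and that the pairings here are bilinear, so no complex conjugates intervene. (An alternative route would bypass $R$'s surjectivity and instead define a functional on ${\mathcal S}_\infty+\operatorname{span}\{\psi_1,\dots,\psi_K\}$ agreeing with $f$ and with value $0$ on each $\psi_k$, then invoke Hahn--Banach; the polynomial-correction argument above is cleaner and reuses what has already been proved.)
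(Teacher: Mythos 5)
Your proof is correct, and it takes a genuinely different route from the paper's. The paper proves $U\subseteq R(V)$ by invoking its Lemma~\ref{lem:151209-1} (for a suitably large $N$ the classes $[\psi_1],\dots,[\psi_K]$ remain linearly independent in the Banach quotient ${\mathcal W}_N/{\mathcal V}_N$), and then applies the Hahn--Banach theorem successively inside the fixed Banach space ${\mathcal W}_N$ to extend the functional on ${\mathcal V}_N$ step by step through ${\rm Span}({\mathcal V}_N\cup\{\psi_1,\dots,\psi_{K'}\})$, assigning the value $0$ at each $\psi_k$; the restriction of the resulting functional to ${\mathcal S}$ then lands in $V$. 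You instead treat the surjectivity of $R$ (already proved) as a black box, pull back $f$ to some $F_0\in{\mathcal S}'$, and correct $F_0$ by an element of $\ker R={\mathcal P}$. The new ingredient you supply is the surjectivity of the moment map $T\colon{\mathcal P}\to{\mathbb C}^K$, $T(P)=(\langle P,\psi_k\rangle)_{k}$, which you establish by a short finite-dimensional duality argument: a nontrivial relation $\sum_k a_k\langle P,\psi_k\rangle=0$ for all $P$ forces $\sum_k a_k\psi_k\in{\mathcal S}_\infty$, contradicting the independence of the $[\psi_k]$ in ${\mathcal S}/{\mathcal S}_\infty$. This is cleaner and more modular than the paper's route, since it avoids Lemma~\ref{lem:151209-1} and keeps the quantitative ${\mathcal V}_N/{\mathcal W}_N$ machinery confined to the surjectivity step already done; on the other hand, the paper's approach has the virtue of producing the required $G$ in one pass inside a single normed space. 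One small caution about your parenthetical remark: the ``alternative route'' of directly extending a functional on ${\mathcal S}_\infty+\operatorname{span}\{\psi_1,\dots,\psi_K\}$ by Hahn--Banach is not actually a shortcut, since ${\mathcal S}$ and ${\mathcal S}_\infty$ are not normed and one would again have to pass to the Banach completions ${\mathcal V}_N,{\mathcal W}_N$ --- which is precisely what the paper does.
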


To prove Theorem \ref{thm:151209-9},
we need the following key lemma:
\begin{lemma}\label{lem:151209-1}
Under the assumptions of
Theorem $\ref{thm:151209-9}$,
there exists $N \gg 1$ such that
the system
$\{[\psi_1],[\psi_2],\ldots,[\psi_K]\}$
is linearly independent 
in ${\mathcal W}_N/{\mathcal V}_N$.
\end{lemma}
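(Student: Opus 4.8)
The plan is to detect linear (in)dependence of the classes $[\psi_k]$ by means of moment functionals. Recall from the remarks preceding the statement that, for $\psi\in\mathcal{S}$, one has $\psi\in\mathcal{S}_\infty$ exactly when $\mathcal{F}\psi$ vanishes to infinite order at $0$, equivalently $m_\alpha(\psi):=\int_{\mathbb{R}^n}x^\alpha\psi(x)\,dx=0$ for every $\alpha\in\mathbb{N}_0{}^n$ (up to a nonzero constant $m_\alpha(\psi)$ is $\partial^\alpha(\mathcal{F}\psi)(0)$). Thus the linear map $\psi\mapsto(m_\alpha(\psi))_{\alpha\in\mathbb{N}_0{}^n}$ has kernel exactly $\mathcal{S}_\infty$ and induces an embedding of $\mathcal{S}/\mathcal{S}_\infty$ into $\prod_{\alpha}\mathbb{C}$, so the hypothesis of Theorem \ref{thm:151209-9} says precisely that the vectors $(m_\alpha(\psi_k))_{\alpha}$, $k=1,\dots,K$, are linearly independent in $\prod_\alpha\mathbb{C}$. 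A finite linearly independent set in $\prod_\alpha\mathbb{C}$ is already separated by finitely many coordinate projections (the coordinate functionals separate points of the $K$-dimensional span $W$, hence finitely many of them span $W^*$); so I would fix multi-indices $\alpha^{(1)},\dots,\alpha^{(K)}$ for which the $K\times K$ matrix $\big(m_{\alpha^{(i)}}(\psi_k)\big)_{1\le i,k\le K}$ is invertible.

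Next I would choose $N\ge n+1+\max_{1\le i\le K}|\alpha^{(i)}|$ and check that for each $\alpha$ with $|\alpha|\le N-n-1$ the functional $m_\alpha$ extends to a bounded linear functional on $\mathcal{W}_N$. Indeed $\mathcal{W}_N\subset C^N$ and any $f\in\mathcal{W}_N$ obeys $\langle x\rangle^N|f(x)|\le p_N(f)$, whence
\[
|m_\alpha(f)|\le\int_{\mathbb{R}^n}\langle x\rangle^{|\alpha|}|f(x)|\,dx
\le p_N(f)\int_{\mathbb{R}^n}\langle x\rangle^{|\alpha|-N}\,dx=:C_N\,p_N(f),
\]
the last integral converging since $|\alpha|-N\le-n-1<-n$; moreover this extension coincides with the limit of $m_\alpha$ along any $p_N$-approximating sequence from $\mathcal{S}$. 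Since $m_\alpha$ vanishes on $\mathcal{S}_\infty$ by definition of $\mathcal{S}_\infty$, continuity forces $m_\alpha\equiv 0$ on its $p_N$-closure $\mathcal{V}_N=\overline{\mathcal{S}_\infty}^{p_N}$.

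To conclude: suppose $c_1,\dots,c_K\in\mathbb{C}$ satisfy $\sum_{k=1}^{K}c_k[\psi_k]=0$ in $\mathcal{W}_N/\mathcal{V}_N$, i.e. $\sum_{k=1}^{K}c_k\psi_k\in\mathcal{V}_N$. Applying $m_{\alpha^{(i)}}$ for $i=1,\dots,K$ (legitimate since $|\alpha^{(i)}|\le N-n-1$) yields $\sum_{k=1}^{K}c_k\,m_{\alpha^{(i)}}(\psi_k)=0$ for every $i$, and invertibility of $\big(m_{\alpha^{(i)}}(\psi_k)\big)_{i,k}$ forces $c_1=\cdots=c_K=0$. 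Hence $\{[\psi_1],\dots,[\psi_K]\}$ is linearly independent in $\mathcal{W}_N/\mathcal{V}_N$ for this $N$ and every larger one, which is the assertion.

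The main obstacle here is bookkeeping rather than anything conceptual: one must make sure the Fourier characterization of $\mathcal{S}_\infty$ genuinely converts \lq\lq independence in $\mathcal{S}/\mathcal{S}_\infty$'' into \lq\lq independence of the moment vectors'', and that the decay weight $\langle x\rangle^{-N}$ built into $p_N$ dominates the polynomial growth $|x|^{|\alpha|}$ for the finitely many relevant $\alpha$, so that the moment functionals survive the passage to the Banach spaces $\mathcal{W}_N$ and annihilate $\mathcal{V}_N$. Everything after that is finite-dimensional linear algebra.
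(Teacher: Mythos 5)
Your proof is correct, and it reaches the same goal by a meaningfully different route than the paper. Both you and the paper reduce the problem to finding finitely many moment functionals $m_\alpha(\psi)=\int x^\alpha\psi(x)\,dx$ that (a) survive as bounded functionals on $\mathcal W_N$ when $N$ is large relative to $|\alpha|$, (b) annihilate $\mathcal V_N$ by continuity, and (c) certify the independence of $\psi_1,\dots,\psi_K$ modulo $\mathcal S_\infty$. The difference is in how the finite family of $\alpha$'s is produced. The paper argues by compactness: for each unit vector $(a_1,\dots,a_K)\in S^{2K-1}$ it picks a multi-index $\alpha(a)$ with $m_{\alpha(a)}\bigl(\sum a_k\psi_k\bigr)\neq 0$, uses continuity in $a$ to get an open neighborhood where this moment stays nonzero, covers $S^{2K-1}$ by finitely many such neighborhoods, and then takes $N$ large enough to accommodate all the chosen multi-indices. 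You instead observe that the moment map embeds $\mathcal S/\mathcal S_\infty$ into $\prod_\alpha\mathbb C$ and then run a purely finite-dimensional linear-algebra argument: since the coordinate functionals separate points of the $K$-dimensional span of the moment vectors, finitely many of them restrict to a basis of its dual, giving a $K\times K$ invertible matrix $\bigl(m_{\alpha^{(i)}}(\psi_k)\bigr)$. Your version is a bit slicker — it yields an explicit invertible certificate of independence and avoids the covering argument — whereas the paper's compactness argument is perhaps more self-contained for a reader who doesn't want to invoke the duality fact about coordinate functionals. Your bookkeeping of the threshold $N\ge n+1+\max_i|\alpha^{(i)}|$ and the verification that $m_\alpha$ extends to $\mathcal W_N$ and vanishes on $\mathcal V_N$ are exactly the ingredients the paper also needs (though the paper compresses them into its choice of $N$), so the technical substance matches.
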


\begin{proof}
Write
\[
S^{2K-1}\equiv 
\left\{
(a_1,a_2,\ldots,a_K) \in {\mathbb C}^K
\,:\,
\sum_{k=1}^K|a_k|^2=1
\right\}.
\]
We have only to show that
\[
\left\{
\sum_{k=1}^K a_k \psi_k\,:\,
(a_1,a_2,\ldots,a_K) \ne (0,0,\ldots,0) \in {\mathbb C}^K
\right\}
\cap {\mathcal V}_N
=\emptyset
\]
or equivalently
\[
\left\{
\sum_{k=1}^K a_k \psi_k\,:\,
(a_1,a_2,\ldots,a_K) \in S^{2K-1}
\right\}
\cap {\mathcal V}_N
=\emptyset
\]
for some large $N \gg n+1$.

For each
$(a_1,a_2,\ldots,a_K) \in S^{2K-1}$,
there exists
$\alpha(a_1,a_2,\ldots,a_K)
\in {\mathbb N}_0{}^n$
such that
\[
\int_{{\mathbb R}^n}x^{\alpha(a_1,a_2,\ldots,a_K)}
\sum_{j=1}^K a_j\psi_j(x)\,dx \ne 0.
\]
Since the function
$$\displaystyle
(b_1,b_2,\ldots,b_K) \in {\mathbb C}^K
\mapsto
\int_{{\mathbb R}^n}x^{\alpha(a_1,a_2,\ldots,a_K)}
\sum_{j=1}^K b_j\psi_j(x)\,dx \in {\mathbb C}
$$
is continuous,
there exists an open neighborhood
$U(a_1,a_2,\ldots,a_K) \subset {\mathbb C}^K$
of the point
$(a_1,a_2,\ldots,a_K) \in S^{2K-1}$ 
such that
\[
\int_{{\mathbb R}^n}x^{\alpha(a_1,a_2,\ldots,a_K)}
\sum_{j=1}^K b_j\psi_j(x)\,dx \ne 0
\]
for all 
$(b_1,b_2,\ldots,b_K) \in U(a_1,a_2,\ldots,a_K)$.
Since $S^{2K-1}$ is compact,
we can find a finite collection
$(a_1^l,a_2^l,\ldots,a_K^l)$
for $l=1,2,\ldots,L$
such that
\[
S^{2K-1} \subset \bigcup_{l=1}^L
U(a_1^l,a_2^l,\ldots,a_K^l).
\]
Taking
\[
N \equiv n+2+\sum_{l=1}^L |\alpha(a_1^l,a_2^l,\ldots,a_K^l)|,
\]
we obtain the desired result.
\end{proof}

We prove Theorem \ref{thm:151209-9}.
In fact, from Lemma \ref{lem:151209-1},
for any $f \in U$,
we can find $G \in {\mathcal W}_N$
so that
$G|{\mathcal V}_N=g$
and that
$\langle G,\psi_k \rangle=0$
for all $k=1,2,\ldots,K$.
In fact, we have a strictly increasing sequence
of closed linear subspaces
$\{{\rm Span}
({\mathcal V}_N \cup \{\psi_k\}_{k=1}^{K'})\}_{K'=1}^K$.
Apply successively the Hahn Banach theorem
to obtain the desired $G$.
Then observe that $G|{\mathcal S} \in R$ and $R(G|{\mathcal S})=f$.
Thus $F=G|{\mathcal S}$ does the job.

\begin{remark}\label{rem:151224-1}
Under the assumption of Theorem \ref{thm:151209-9},
set
\[
V_0
\equiv
\bigcap_{j=1}^N\{f \in {\mathcal S}'\,:\,
|\langle f,\varphi_j \rangle|<1\}
\cap
\bigcap_{k=1}^K\{f \in {\mathcal S}'\,:\,
\langle f,\psi_k \rangle=0\}.
\]
Then $R(V_0)=U$ as the above proof shows.
\end{remark}

\begin{theorem}\label{thm:151209-2}
Let $\varphi_1,\varphi_2,\ldots,\varphi_N \in {\mathcal S}_\infty$
and
let $\psi_1,\psi_2,\ldots,\psi_{\tilde{K}} \in {\mathcal S}$.
Assume that
the system
$\{[\psi_1],[\psi_2],\ldots,[\psi_K]\}$
is a maximal family of linearly independent elements
in
$\{[\psi_1],[\psi_2],\ldots,[\psi_{\tilde{K}}]\}$
in ${\mathcal S}/{\mathcal S}_\infty$,
so that $\tilde{K} \ge K$.
More precisely,
\begin{equation}\label{eq:151224-18}
\psi_k=\tilde{\varphi}_k+\sum_{l=1}^K a_{k l}\psi_l
\end{equation}
for some $\tilde{\varphi}_k \in {\mathcal S}_\infty$
and $a_{k l} \in {\mathbb C}$,
$k=\tilde{K}+1,\tilde{K}+2,\ldots,K$
and
$l=1,2,\ldots,K$.
Let 
\[
U \equiv \bigcap_{j=1}^N\{f \in {\mathcal S}_\infty'\,:\,
|\langle f,\varphi_j \rangle|<1\}
\cap
\bigcap_{k=1}^K\{f \in {\mathcal S}_\infty'\,:\,
|\langle f,\tilde{\varphi}_k \rangle|<1\}
\]
be a neighborhood of $0=0_{{\mathcal S}_\infty'}
\in {\mathcal S}_\infty'$
and
\[
V \equiv \bigcap_{j=1}^N\{f \in {\mathcal S}'\,:\,
|\langle f,\varphi_j \rangle|<1\}
\cap
\bigcap_{k=1}^K\{f \in {\mathcal S}'\,:\,
|\langle f,\psi_k \rangle|<1\}
\]
a neighborhood of $0=0_{{\mathcal S}'}
\in {\mathcal S}'$.
Then $R(U) \supset V$.
\end{theorem}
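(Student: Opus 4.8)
The plan is to reduce Theorem~\ref{thm:151209-2} to the already-proved Theorem~\ref{thm:151209-9} (together with Remark~\ref{rem:151224-1}). The key point is that although $V$ is cut out using all the functionals $\langle\,\cdot\,,\psi_k\rangle$ for $k=1,\dots,\tilde K$, the extra functionals with $k>K$ are, modulo ${\mathcal S}_\infty$, linear combinations of $\psi_1,\dots,\psi_K$ with a correction term $\tilde\varphi_k\in{\mathcal S}_\infty$; so on ${\mathcal S}_\infty'$ the conditions indexed by $k>K$ are automatically governed by the functionals already present in $U$. Concretely, I would first apply Theorem~\ref{thm:151209-9} (in the form given by Remark~\ref{rem:151224-1}) to the system $\varphi_1,\dots,\varphi_N,\tilde\varphi_{\tilde K+1},\dots,\tilde\varphi_K\in{\mathcal S}_\infty$ and $\psi_1,\dots,\psi_K\in{\mathcal S}$, whose classes are linearly independent in ${\mathcal S}/{\mathcal S}_\infty$ by hypothesis. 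This produces, for each $f$ in the neighborhood
\[
U'\equiv\bigcap_{j=1}^N\{f:|\langle f,\varphi_j\rangle|<1\}\cap\bigcap_{k=\tilde K+1}^K\{f:|\langle f,\tilde\varphi_k\rangle|<1\},
\]
an extension $F\in{\mathcal S}'$ with $R(F)=f$ and moreover $\langle F,\psi_k\rangle=0$ for $k=1,\dots,K$.

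Now I would check that this $F$ actually lies in $V$. For $j=1,\dots,N$ we have $\langle F,\varphi_j\rangle=\langle f,\varphi_j\rangle$, which is controlled since $f\in U'\supset U$. For $k=1,\dots,K$, $\langle F,\psi_k\rangle=0$, so the constraint $|\langle F,\psi_k\rangle|<1$ holds trivially. For $k=\tilde K+1,\dots,K$ (the indices where $\psi_k$ is dependent), the defining relation~\eqref{eq:151224-18} gives
\[
\langle F,\psi_k\rangle=\langle F,\tilde\varphi_k\rangle+\sum_{l=1}^K a_{kl}\langle F,\psi_l\rangle=\langle f,\tilde\varphi_k\rangle,
\]
using $R(F)=f$ on ${\mathcal S}_\infty$ and $\langle F,\psi_l\rangle=0$; hence $|\langle F,\psi_k\rangle|=|\langle f,\tilde\varphi_k\rangle|<1$ because $f\in U'$. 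Thus $F\in V$ and $R(F)=f$, which shows $f\in R(V)$; since $U\subset U'$ this gives $U\subset R(V)$, i.e. $R(U)\supset V$ is the contrapositive reformulation — one should be slightly careful here about which inclusion is being claimed, but the mechanism is exactly this transfer through~\eqref{eq:151224-18}.

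The one genuine subtlety — and the step I expect to need the most care — is matching the neighborhoods precisely: the statement's $U$ uses the functionals $\langle\,\cdot\,,\tilde\varphi_k\rangle$ with $k$ running over $1,\dots,K$, whereas the relations~\eqref{eq:151224-18} are written for $k=\tilde K+1,\dots,K$ (with an apparent typo in the index range), so I would first normalize notation so that the dependent indices $\psi_k$, $k>K$, each come with their own $\tilde\varphi_k\in{\mathcal S}_\infty$, and then note that the $\tilde\varphi_k$ appearing in $U$ are precisely these. Once the bookkeeping is fixed, the proof is a direct application of Theorem~\ref{thm:151209-9}/Remark~\ref{rem:151224-1} plus the substitution~\eqref{eq:151224-18}; no new analytic input (Hahn--Banach, compactness, etc.) is required beyond what was already used. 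I would close by remarking that Theorem~\ref{thm:151209-2} combined with Theorem~\ref{thm:151209-9} yields that $R$ maps a basis of neighborhoods of $0$ in ${\mathcal S}'$ onto a basis of neighborhoods of $0$ in ${\mathcal S}_\infty'$, which is exactly the openness of $R$ claimed in Theorem~\ref{thm:151209-1}(4).
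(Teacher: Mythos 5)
Your argument is correct and follows the paper's proof exactly: you apply Remark~\ref{rem:151224-1} (hence Theorem~\ref{thm:151209-9}) to produce an extension $F$ of $f$ with $\langle F,\psi_k\rangle=0$ on the linearly independent $\psi_k$, and then use~\eqref{eq:151224-18} to control $\langle F,\psi_k\rangle$ for the dependent indices. You are also right to flag the garbled index ranges in the statement; the only small imprecision is your describing $R(U)\supset V$ as a ``contrapositive reformulation'' of $U\subset R(V)$ --- in fact the theorem's inclusion is simply stated backward (it should read $R(V)\supset U$, as the paper's own proof, which begins ``Let $f\in V$'' but then extends $f$ from ${\mathcal S}_\infty$, clearly intends), and your argument, like the paper's, establishes exactly that.
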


\begin{proof}
Let $f \in V$.
Then we can find a linear functional
$F \in {\mathcal S}'$
such that
$F|{\mathcal S}_\infty=f$
and that
\begin{equation}\label{eq:151224-19}
\langle F,\psi_k \rangle=0
\end{equation}
for $k=1,2,\ldots,\tilde{K}$ according to Remark \ref{rem:151224-1}.
Observe that
$$
\langle F,\psi_k \rangle
=\langle F,\tilde{\varphi}_k \rangle
\in \{z \in {\mathbb C}\,:\,|z|<1\}
$$
for $k=\tilde{K}+1,\tilde{K}+2,\ldots,K$
from (\ref{eq:151224-18}) and (\ref{eq:151224-19}).
Thus $R(U) \supset V$.
\end{proof}

\section{Homogeneous Besov spaces}

In this section we consider homogeneous Besov spaces.
As we will see, justifying the definition is one of the hard tasks.

\subsection{Definition}

We define homogeneous Besov spaces
and justify their definition.

\begin{definition}
Let $1 \le p,q \le \infty$ and $s \in {\mathbb R}$.
Choose $\varphi \in {\mathcal S}$
so that
$\chi_{B(4) \setminus B(2)} \le \varphi
\le \chi_{B(8) \setminus B(1)}$.
Define
\[
\dot{B}^s_{pq}
\equiv
\left\{
f \in {\mathcal S}'/{\mathcal P}\,:\,
\|f\|_{\dot{B}^s_{pq}}
\equiv
\left(
\sum_{j=-\infty}^\infty 
(2^{j s}\|{\mathcal F}^{-1}[\varphi_j \cdot {\mathcal F}f]\|_p)^q
\right)^{\frac{1}{q}}<\infty
\right\}.
\]
The space
$\dot{B}^s_{pq}$ is the set of all
$f \in {\mathcal S}'/{\mathcal P}$
for which the quasi-norm
$\|f\|_{\dot{B}^s_{pq}}$
is finite.
\end{definition}

A couple of helpful remarks may be in order.
\begin{remark}
\
\begin{enumerate}
\item
Since
${\mathcal S}'/{\mathcal P}$
is a quotient linear space,
the expression
$f \in {\mathcal S}'/{\mathcal P}$
is not appropriate.
Instead,
one should have written
$[f]\in{\mathcal S}'/{\mathcal P}$,
where
$[f]$ denotes the class
in ${\mathcal S}'/{\mathcal P}$
to which $f$ belongs.
Nevertheless
we write
$f \in {\mathcal S}'/{\mathcal P}$
by habit.
\item
It does not make sense
to consider
$\|f\|_{p}$ for $f \in {\mathcal S}'$;
${\mathcal S}'$ is NOT included in $L^1_{\rm loc}$.
However, the distribution
${\mathcal F}^{-1}[\varphi_j \cdot {\mathcal F}f]$
is a $C^\infty$-function as is seen from the identity
\[
{\mathcal F}^{-1}[\varphi_j \cdot {\mathcal F}f](x)
=
\frac{1}{\sqrt{(2\pi)^n}}
\langle f,{\mathcal F}^{-1}\varphi_j(x-\cdot)\rangle
\quad (x \in {\mathbb R}^n).
\]
\item
Note that $f \in {\mathcal P}$
if and only if
${\mathcal F}^{-1}[\varphi_j \cdot {\mathcal F}f]=0$
for all $j \in {\mathbb Z}$.
In fact when $f \in {\mathcal P}$,
$\varphi_j \cdot {\mathcal F}f=0$
for all $j \in {\mathbb Z}$,
since $f \in 
{\rm Span}(\{\partial^\alpha \delta_0\}_{\alpha \in {\mathbb N}_0{}^n})$.
If ${\mathcal F}^{-1}[\varphi_j \cdot {\mathcal F}f]=0$
for all $j \in {\mathbb Z}$,
then
$\varphi_j \cdot {\mathcal F}f=0$.
Choose a test function
$\tau \in C^\infty_{\rm c}$.
Since $\tau$ is supported away from the origin
and $\tau$ vanishes outside of a bounded set,
one has
\[
\tau=\sum_{j=-\infty}^\infty
\frac{\tau \varphi_j}{\Phi} \cdot \varphi_j,
\]
where 
\begin{equation}\label{eq:151210-1}
\Phi \equiv \sum_{j=-\infty}^\infty \varphi_j{}^2.
\end{equation}
It counts that the function
$\varphi_j/\Phi$ makes sense
as an element in $C^\infty_{\rm c}$;
compare the size of their support.
Also, the expression
(\ref{eq:151210-1}) is essentially a finite sum,
so that
\[
\langle \tau,{\mathcal F}f \rangle
=\sum_{j=-\infty}^\infty
\left<
\frac{\tau \varphi_j}{\Phi},\varphi_j \cdot {\mathcal F}f
\right>
=0.
\]
Thus, ${\rm supp}({\mathcal F}f) \subset \{0\}$,
implying that $f \in {\mathcal P}$.
\end{enumerate}
\end{remark}

The following observation justifies
the definition $\dot{B}^s_{pq}$ as a linear space.
In fact, we chose $\varphi$ 
so that the norm of 
$\dot{B}^s_{pq}$ depends on $\varphi$.
But as the following theorem shows,
$\dot{B}^s_{pq}$ is independent of $\varphi$
as a set, which justifies the definition of
$\dot{B}^s_{pq}$.
\begin{theorem}\label{thm:160118-3}
Let $1 \le p,q \le \infty$ and $s \in {\mathbb R}$.
Let $\varphi,\tilde{\varphi} \in {\mathcal S}$
satisfy
$\chi_{B(4) \setminus B(2)} \le \varphi,\tilde{\varphi}
\le \chi_{B(8) \setminus B(1)}$.
Set
$\varphi_j:\equiv\varphi(2^{-j}\cdot)$
and
$\tilde{\varphi}_j\equiv\tilde{\varphi}(2^{-j}\cdot)$
for $j \in {\mathbb Z}$.
Then we have
\begin{align*}
\left(
\sum_{j=-\infty}^\infty 
(2^{j s}\|{\mathcal F}^{-1}[\varphi_j \cdot {\mathcal F}f]\|_p)^q
\right)^{\frac{1}{q}}
\sim
\left(
\sum_{j=-\infty}^\infty 
(2^{j s}\|{\mathcal F}^{-1}[\tilde{\varphi}_j \cdot {\mathcal F}f]\|_p)^q
\right)^{\frac{1}{q}}
\end{align*}
for all $f \in {\mathcal S}'$.
\end{theorem}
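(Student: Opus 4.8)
The plan is to prove the equivalence of the two quasi-norms by exploiting the finite overlap of the dyadic frequency annuli and a classical $L^p$-multiplier estimate. First I would record the key geometric fact: since $\varphi$ and $\tilde\varphi$ are supported in $B(8)\setminus B(1)$, the supports of $\varphi_j$ and $\tilde\varphi_k$ are disjoint unless $|j-k|$ is bounded by an absolute constant (here $|j-k|\le 3$ suffices, because $2^{j}\cdot 8 < 2^{k}\cdot 1$ once $k-j \ge 3$). Moreover, on the support of $\tilde\varphi_j$ the sum $\sum_{|k-j|\le 3}\varphi_k$ is identically $1$: indeed, where $\tilde\varphi_j$ lives we have $2^{j}\le|\xi|\le 2^{j+3}$, and since $\chi_{B(4)\setminus B(2)}\le\varphi$, the functions $\varphi_k$ with $|k-j|\le 3$ already cover this annulus. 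Hence we get the reproducing identity
\[
{\mathcal F}^{-1}[\tilde\varphi_j\cdot{\mathcal F}f]
=\sum_{|k-j|\le 3}{\mathcal F}^{-1}\bigl[\tilde\varphi_j\cdot{\mathcal F}{\mathcal F}^{-1}[\varphi_k\cdot{\mathcal F}f]\bigr],
\]
which makes sense because each ${\mathcal F}^{-1}[\varphi_k\cdot{\mathcal F}f]$ is a $C^\infty$ function (as noted in the remark after the definition) whose Fourier transform is the compactly supported distribution $\varphi_k\cdot{\mathcal F}f$.

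Next I would estimate $\|{\mathcal F}^{-1}[\tilde\varphi_j\cdot{\mathcal F}f]\|_p$ by $\sum_{|k-j|\le 3}\|{\mathcal F}^{-1}[\tilde\varphi_j\cdot{\mathcal F}g_k]\|_p$ where $g_k\equiv{\mathcal F}^{-1}[\varphi_k\cdot{\mathcal F}f]$. The operator $g\mapsto{\mathcal F}^{-1}[\tilde\varphi_j\cdot{\mathcal F}g]$ is convolution with ${\mathcal F}^{-1}\tilde\varphi_j = 2^{jn}({\mathcal F}^{-1}\tilde\varphi)(2^j\cdot)$, whose $L^1$ norm is $\|{\mathcal F}^{-1}\tilde\varphi\|_1$, independent of $j$ by the dilation-invariance of the $L^1$ norm under this scaling. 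So by Young's inequality $\|{\mathcal F}^{-1}[\tilde\varphi_j\cdot{\mathcal F}g_k]\|_p\le \|{\mathcal F}^{-1}\tilde\varphi\|_1\,\|g_k\|_p$ with a constant uniform in $j,k$. Multiplying by $2^{js}$, using $2^{js}\sim 2^{ks}$ for $|j-k|\le 3$, and then summing the $q$-th powers over $j$ (or taking the sup if $q=\infty$), the bounded number of shifted copies combine via the triangle inequality in $\ell^q$ to give
\[
\Bigl(\sum_j (2^{js}\|{\mathcal F}^{-1}[\tilde\varphi_j\cdot{\mathcal F}f]\|_p)^q\Bigr)^{1/q}
\lesssim
\Bigl(\sum_k (2^{ks}\|{\mathcal F}^{-1}[\varphi_k\cdot{\mathcal F}f]\|_p)^q\Bigr)^{1/q}.
\]
The reverse inequality follows by symmetry, interchanging the roles of $\varphi$ and $\tilde\varphi$, which completes the proof.

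The main obstacle, and the point that deserves the most care, is justifying the reproducing identity at the level of tempered distributions modulo polynomials rather than merely formally. One must check that $\varphi_k\cdot{\mathcal F}f$ is a well-defined compactly supported distribution, that its inverse Fourier transform is a genuine smooth (in fact, slowly growing) function so that convolving it with the Schwartz function ${\mathcal F}^{-1}\tilde\varphi_j$ is legitimate and returns ${\mathcal F}^{-1}[\tilde\varphi_j\varphi_k\cdot{\mathcal F}f]$, and that the identity $\tilde\varphi_j=\sum_{|k-j|\le 3}\tilde\varphi_j\varphi_k$ of test functions transfers correctly through these operations. A secondary subtlety is that the representative $f\in{\mathcal S}'$ is only determined up to a polynomial, but since $\varphi_j$ vanishes in a neighborhood of the origin, $\varphi_j\cdot{\mathcal F}f$ is unchanged if $f$ is modified by a polynomial (whose Fourier transform is supported at $0$), so every quantity appearing is well-defined on ${\mathcal S}'/{\mathcal P}$; this should be remarked explicitly. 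All the remaining steps — Young's inequality, the scaling of the $L^1$ norm, and the finite-overlap bookkeeping in $\ell^q$ — are routine.
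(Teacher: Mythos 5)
Your overall strategy is the same as the paper's, but there is a genuine error in the key step. You assert that $\sum_{|k-j|\le 3}\varphi_k$ is \emph{identically} $1$ on $\operatorname{supp}\tilde\varphi_j$, justifying this by the observation that the $\varphi_k$ \lq\lq cover'' that annulus. Covering only gives the inequality $\sum_{|k-j|\le 3}\varphi_k\ge 1$, not equality: the hypothesis forces $\varphi_k=1$ on the closed annulus $\{2^{k+1}\le|\xi|\le 2^{k+2}\}$, but on the transition regions neighbouring $\varphi_k$'s overlap with values in $(0,1]$, so the sum can, and generically does, exceed $1$. Concretely, at $|\xi|=3$ one has $\varphi_0(\xi)=1$ while $\varphi_{-1}(\xi)=\varphi(6)$ and $\varphi_1(\xi)=\varphi(3/2)$ are free to be positive, so $\sum_k\varphi_k(\xi)>1$ for a typical admissible $\varphi$. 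Consequently the reproducing identity ${\mathcal F}^{-1}[\tilde\varphi_j\cdot{\mathcal F}f]=\sum_{|k-j|\le 3}{\mathcal F}^{-1}[\tilde\varphi_j\varphi_k\cdot{\mathcal F}f]$ fails, and the estimate you derive from it is not justified.

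The fix is small but essential, and it is exactly what the paper does: instead of $\tilde\varphi_j$ you must use the multiplier $\eta_j\equiv\tilde\varphi_j\big/\sum_{|k-j|\le 3}\varphi_k$, which is a legitimate $C_{\rm c}^\infty$ function because the denominator is $\ge 1$ on a neighbourhood of $\operatorname{supp}\tilde\varphi_j$ (by the covering argument you already gave, now correctly interpreted). Then the identity $\tilde\varphi_j=\eta_j\sum_{|k-j|\le 3}\varphi_k$ is exact, and since $\eta_j=\eta_0(2^{-j}\cdot)$ the quantity $\|{\mathcal F}^{-1}\eta_j\|_1$ is independent of $j$, so Young's inequality and the finite $\ell^q$-overlap bookkeeping proceed as you wrote. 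The paper packages the same idea more economically, writing $\psi\equiv\varphi/(\tilde\varphi_{-1}+\tilde\varphi+\tilde\varphi_1)$ so that $\varphi_j=\psi_j(\tilde\varphi_{j-1}+\tilde\varphi_j+\tilde\varphi_{j+1})$, using only the three nearest neighbours rather than seven; your slightly longer window $|j-k|\le 3$ is harmless. A minor arithmetic slip: the supports of $\varphi_j$ and $\tilde\varphi_k$ become disjoint once $|j-k|\ge 4$, not once $k-j\ge 3$, though your conclusion that $|j-k|\le 3$ suffices is correct. Your closing remarks on well-definedness modulo ${\mathcal P}$ and on the legitimacy of convolving a Schwartz function with the smooth slowly growing function ${\mathcal F}^{-1}[\varphi_k\cdot{\mathcal F}f]$ are correct and worth keeping.
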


\begin{proof}
By symmetry, it suffices to show that
\begin{align}\label{eq:151210-2}
\left(
\sum_{j=-\infty}^\infty 
(2^{j s}\|{\mathcal F}^{-1}[\varphi_j \cdot {\mathcal F}f]\|_p)^q
\right)^{\frac{1}{q}}
\lesssim
\left(
\sum_{j=-\infty}^\infty 
(2^{j s}\|{\mathcal F}^{-1}[\tilde{\varphi}_j \cdot {\mathcal F}f]\|_p)^q
\right)^{\frac{1}{q}}.
\end{align}
Define $\psi \in C^\infty_{\rm c}$ by
\[
\psi
\equiv
\frac{\varphi}{\tilde{\varphi}_{-1}+\tilde{\varphi}+\tilde{\varphi}_1}.
\]
Then we have
$\varphi_j
=\psi_j(\tilde{\varphi}_{j-1}+\tilde{\varphi}_j+\tilde{\varphi}_{j+1})$
for each $j \in {\mathbb Z}$.
Inserting this relation into the right-hand side of (\ref{eq:151210-2}),
we obtain
\begin{align*}
\lefteqn{
\left(
\sum_{j=-\infty}^\infty 
(2^{j s}\|{\mathcal F}^{-1}[\varphi_j \cdot {\mathcal F}f]\|_p)^q
\right)^{\frac{1}{q}}
}\\
&=
\left(
\sum_{j=-\infty}^\infty 
(2^{j s}\|{\mathcal F}^{-1}[
\psi_j\cdot
(\tilde{\varphi}_{j-1}+\tilde{\varphi}_j+\tilde{\varphi}_{j+1})
\cdot
{\mathcal F}f]\|_p)^q
\right)^{\frac{1}{q}}.
\end{align*}
By the relation between
the Fourier transform,
the convolution
and the pointwise multiplication,
we have
\begin{align*}
\lefteqn{
\left(
\sum_{j=-\infty}^\infty 
(2^{j s}\|{\mathcal F}^{-1}[\varphi_j \cdot {\mathcal F}f]\|_p)^q
\right)^{\frac{1}{q}}
}\\
&\simeq
\left(
\sum_{j=-\infty}^\infty 
(2^{j s}\|{\mathcal F}^{-1}\psi_j*
{\mathcal F}^{-1}[
(\tilde{\varphi}_{j-1}+\tilde{\varphi}_j+\tilde{\varphi}_{j+1})
\cdot
{\mathcal F}f]\|_p)^q
\right)^{\frac{1}{q}}.
\end{align*}
By the Young inequality and the fact that
$\|{\mathcal F}\varphi_j\|_1=\|{\mathcal F}\varphi_1\|_1$
for all $j \in {\mathbb N}$,
we obtain
\begin{align*}
\lefteqn{
\left(
\sum_{j=-\infty}^\infty 
(2^{j s}\|{\mathcal F}^{-1}[\varphi_j \cdot {\mathcal F}f]\|_p)^q
\right)^{\frac{1}{q}}
}\\
&\lesssim
\left(
\sum_{j=-\infty}^\infty 
(2^{j s}\|{\mathcal F}^{-1}\psi_j\|_1
\|{\mathcal F}^{-1}[
(\tilde{\varphi}_{j-1}+\tilde{\varphi}_j+\tilde{\varphi}_{j+1})
\cdot
{\mathcal F}f]\|_p)^q
\right)^{\frac{1}{q}}\\
&\simeq
\left(
\sum_{j=-\infty}^\infty 
(2^{j s}
\|{\mathcal F}^{-1}[
(\tilde{\varphi}_{j-1}+\tilde{\varphi}_j+\tilde{\varphi}_{j+1})
\cdot
{\mathcal F}f]\|_p)^q
\right)^{\frac{1}{q}}.
\end{align*}
Finally, by the triangle inequality and the index shiftings,
we have
\begin{align*}
\lefteqn{
\left(
\sum_{j=-\infty}^\infty 
(2^{j s}\|{\mathcal F}^{-1}[\varphi_j \cdot {\mathcal F}f]\|_p)^q
\right)^{\frac{1}{q}}
}\\
&\lesssim
\left(
\sum_{j=-\infty}^\infty 
(2^{j s}
\|{\mathcal F}^{-1}[\tilde{\varphi}_{j-1}{\mathcal F}f]\|_p)^q
\right)^{\frac{1}{q}}
+
\left(
\sum_{j=-\infty}^\infty 
(2^{j s}
\|{\mathcal F}^{-1}[\tilde{\varphi}_j \cdot {\mathcal F}f]\|_p)^q
\right)^{\frac{1}{q}}\\
&\quad+
\left(
\sum_{j=-\infty}^\infty 
(2^{j s}
\|{\mathcal F}^{-1}[\tilde{\varphi}_{j+1}{\mathcal F}f]\|_p)^q
\right)^{\frac{1}{q}}\\
&\simeq
\left(
\sum_{j=-\infty}^\infty 
(2^{j s}\|{\mathcal F}^{-1}[\tilde{\varphi}_j \cdot {\mathcal F}f]\|_p)^q
\right)^{\frac{1}{q}},
\end{align*}
which proves (\ref{eq:151210-2}).
\end{proof}

Before we conclude this section,
we have a remark helpful till the end of this section.
\begin{remark}\label{rem:151224-2}
Let $\zeta \in C^\infty_{\rm c}({\mathbb R}^n \setminus \{0\})$.
The above proof shows that 
we have many possibilities of $\zeta$
in the definition of $\dot{B}^s_{pq}$;
\[
\|f\|_{\dot{B}^s_{pq}}
\equiv
\left(
\sum_{j=-\infty}^\infty 
(2^{j s}\|{\mathcal F}^{-1}[\zeta_j \cdot {\mathcal F}f]\|_p)^q
\right)^{\frac{1}{q}}.
\]
A particular choise of $\zeta$;
$\zeta=\psi-\psi(2\cdot)$,
where $\chi_{B(1)} \le \psi \le \chi_{B(2)}$,
is useful for later considerations.
\end{remark}

\subsection{H\"{o}lder-Zygmund spaces and Besov spaces}

So far, we justified the definition 
of the homogeneous Besov spaces $\dot{B}^s_{pq}$
with $1 \le p,q \le \infty$ and $s \in {\mathbb R}$
as a linear space (or a normed space).
However, it is rather hard to show that
Besov space $\dot{B}^s_{pq}$ is complete.
As is often the case,
it is a burden to construct the limit point
when we are given a Cauchy sequence
in metric spaces.

The main aim of this section is twofold;
one is to create a tool to obtain a candidate of the limit
point when we are given a Cauchy sequence in the space $\dot{B}^s_{pq}$.
The other is to exhibit an example
showing that the homogeneous Besov space 
$\dot{B}^s_{\infty\infty}$
is useful.
In fact, as a special case
the homogeneous H\"{o}lder-Zygmund spaces
are realized with the scale $\dot{B}$.

\subsubsection{\bf The difference operator}

To define homogeneous H\"{o}lder-Zygmund spaces,
we need the notion of difference of higher order.
We start with the following elementary identity:
\begin{lemma}\label{lem:151224-1}
$\displaystyle
\sum_{l=1}^m (-1)^l l^m{}_m {\rm C}_l
=(-1)^m m!
$
for all $m \in {\mathbb N}$.
\end{lemma}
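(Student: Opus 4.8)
The plan is to prove the identity $\sum_{l=1}^m (-1)^l l^m \binom{m}{l} = (-1)^m m!$ by a generating-function / finite-difference argument, which is the cleanest route for an identity of this shape. The key observation is that the left-hand side (up to an overall sign) is the $m$-th finite difference of the function $t \mapsto t^m$ evaluated at $0$, namely $\sum_{l=0}^m (-1)^l \binom{m}{l} l^m = (-1)^m (\Delta^m f)(0)$ where $f(t) = t^m$ and $\Delta g(t) = g(t+1) - g(t)$; note the $l=0$ term contributes $0$ so the sum may start at $l=1$ as stated.

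First I would recall (or prove in one line by the binomial theorem applied to the shift operator $E = 1 + \Delta$, so $\Delta^m = (E-1)^m = \sum_{l=0}^m (-1)^{m-l}\binom{m}{l} E^l$) that for any polynomial $P$ of degree $\le m$, the $m$-th difference $\Delta^m P$ is the constant $m! \cdot (\text{leading coefficient of } P)$; this follows because $\Delta$ lowers the degree of a polynomial by exactly one and maps the leading term $a_d t^d$ to $a_d d\, t^{d-1} + (\text{lower order})$, so iterating $m$ times on a degree-$m$ polynomial with leading coefficient $1$ yields $m!$. Then I would apply this with $P(t) = t^m$: we get $(\Delta^m P)(0) = m!$, i.e. $\sum_{l=0}^m (-1)^{m-l}\binom{m}{l} l^m = m!$. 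Multiplying both sides by $(-1)^m$ and dropping the vanishing $l=0$ term gives exactly $\sum_{l=1}^m (-1)^l l^m \binom{m}{l} = (-1)^m m!$.

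Alternatively, if one prefers to avoid the operator language, the same fact drops out of differentiating $(e^x - 1)^m$: expanding $(e^x-1)^m = \sum_{l=0}^m \binom{m}{l}(-1)^{m-l} e^{lx}$ and taking the $m$-th derivative at $x=0$ gives $\sum_{l=0}^m \binom{m}{l}(-1)^{m-l} l^m$ on one side, while on the other side $(e^x-1)^m = (x + O(x^2))^m = x^m + O(x^{m+1})$, whose $m$-th derivative at $0$ is $m!$. Either packaging is short; I would present the finite-difference version since it is the most transparent and self-contained.

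I do not expect a genuine obstacle here — this is a standard combinatorial identity and the proof is a few lines. The only point requiring a modicum of care is the bookkeeping of signs (the discrepancy between $(-1)^l$ in the statement and $(-1)^{m-l}$ that arises naturally from $(E-1)^m$, which is reconciled by the global factor $(-1)^m$) and the remark that the $l=0$ summand vanishes so that the sum can legitimately be written starting from $l=1$.
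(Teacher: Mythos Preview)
Your proposal is correct. The alternative you sketch---expanding $(e^x-1)^m$ by the binomial theorem and extracting the coefficient of $x^m$ (equivalently, taking the $m$-th derivative at $0$)---is exactly the paper's proof, so you have already anticipated it. Your preferred finite-difference packaging is a mild repackaging of the same idea: writing $\Delta^m = (E-1)^m$ and applying it to $t^m$ is formally the same computation as expanding $(e^x-1)^m$ and reading off the $x^m$-coefficient, since $e^{lx}$ plays the role of $E^l$ under the Taylor/coefficient correspondence. What the finite-difference version buys you is that the fact $\Delta^m(t^m)=m!$ is visible without any analysis (just the degree-lowering property of $\Delta$), whereas the paper's version is a one-line computation once you accept the Taylor expansion $(e^t-1)^m = t^m + \cdots$. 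Either way the proof is a couple of lines, and your sign bookkeeping (reconciling $(-1)^l$ with $(-1)^{m-l}$ via the global $(-1)^m$, and noting the $l=0$ term vanishes) is correct.
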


\begin{proof}
Compare the coefficient of $t^m$ of the function
\[
t^m+\cdots
=
(e^t-1)^m
=\sum_{l=0}^m e^{l t}{}_m {\rm C}_l(-1)^{m-l}.
\]
Then we have
\[
1=\sum_{l=1}^m (-1)^{l+m}\frac{l^m}{m!}{}_m {\rm C}_l,
\]
as was to be shown.
\end{proof}

Let $y \in {\mathbb R}^n$ 
and $f:{\mathbb R}^n \to {\mathbb R}$ be a mapping.
Define inductively $\Delta^m_y f$ by
\[
\Delta^1_y f \equiv f(\cdot+y)-f, \quad
\Delta^{m+1}_y f \equiv \Delta^m_y(\Delta^1_m f).
\]

Based on Lemma \ref{lem:151224-1},
we shall obtain a formula to connect the difference
with the operator
$f \in {\mathcal S}' \mapsto {\mathcal F}^{-1}[\varphi_j \cdot {\mathcal F}f]$.
Set
\[
\Phi
\equiv
\sum_{r=1}^m \sum_{l=1}^m
\frac{r^m}{(r l)^n}{}_m {\rm C}_r\cdot {}_m {\rm C}_l
(-1)^{r+l+m+1}\Psi\left(\frac{\cdot}{r l}\right)
\]
\begin{lemma}\label{lem:151224-101}
Let $f \in L^1_{\rm loc}$
and $\Psi \in C^\infty_{\rm c}$.
Then
\[
2^{j n}\Phi(2^j \cdot)*f(x)-m!\int_{{\mathbb R}^n}\Psi(y)\,dy
\times f(x)
=
\sum_{r=1}^m
(-1)^{r+m+1} r^m
\int_{{\mathbb R}^n}\Psi(y)\Delta^m_{2^{-j}r y}f(x)\,dy.
\]
\end{lemma}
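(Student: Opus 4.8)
The plan is to verify this by a direct computation: expand the convolution on the left using the definition of $\Phi$, rescale each of the $m^2$ summands so that it becomes a $\Psi$-weighted average of a single translate of $f$, and then reassemble the resulting linear combination of translates into $m$-th order differences. The only non-routine ingredients are Lemma~\ref{lem:151224-1} and the elementary identity $\sum_{l=1}^m(-1)^l\,{}_m{\rm C}_l=-1$ (which is $(1-1)^m=1$ with the $l=0$ term removed); everything else is sign bookkeeping. Since $\Phi,\Psi\in C^\infty_{\rm c}$ and $f\in L^1_{\rm loc}$, every integral below is an honest locally finite integral, so the identity is a genuine pointwise statement in $x$ for continuous $f$ (and holds for almost every $x$ in general). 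As a preliminary I would record the closed form of the difference operator, which follows by an immediate induction from $\Delta^1_y f=f(\cdot+y)-f$:
\[
\Delta^m_y f(x)=\sum_{l=0}^m(-1)^{m-l}\,{}_m{\rm C}_l\,f(x+l y).
\]

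Next I would expand the left-hand side. Writing $\Phi(2^j\cdot)*f(x)=\int_{{\mathbb R}^n}\Phi(2^j z)\,f(x-z)\,dz$ and substituting the definition of $\Phi$, the $(r,l)$-summand carries the factor $2^{jn}\,\dfrac{r^m}{(rl)^n}\,\Psi\!\left(\dfrac{2^j z}{rl}\right)$; the substitution $z=\dfrac{rl}{2^j}u$ cancels the powers of $2^{jn}$ and $(rl)^n$ and (using that $\Psi$ may be taken radial, as in the applications, so that $f(x-\cdots)$ becomes $f(x+\cdots)$) leaves
\[
2^{jn}\Phi(2^j\cdot)*f(x)=\sum_{r=1}^m\sum_{l=1}^m(-1)^{r+l+m+1}r^m\,{}_m{\rm C}_r\,{}_m{\rm C}_l\int_{{\mathbb R}^n}\Psi(u)\,f(x+2^{-j}rl\,u)\,du.
\]
Evaluating this with $f\equiv1$ and invoking Lemma~\ref{lem:151224-1} together with $\sum_{l=1}^m(-1)^l{}_m{\rm C}_l=-1$ shows $\sum_{r,l=1}^m(-1)^{r+l+m+1}r^m\,{}_m{\rm C}_r\,{}_m{\rm C}_l=m!$, i.e.\ $\int_{{\mathbb R}^n}\Phi=m!\int_{{\mathbb R}^n}\Psi$; hence $m!\left(\int_{{\mathbb R}^n}\Psi\right)f(x)$ equals the same double sum with $f(x+2^{-j}rl\,u)$ replaced by $f(x)$, and subtracting,
\[
2^{jn}\Phi(2^j\cdot)*f(x)-m!\left(\int_{{\mathbb R}^n}\Psi\right)f(x)=\sum_{r=1}^m\sum_{l=1}^m(-1)^{r+l+m+1}r^m\,{}_m{\rm C}_r\,{}_m{\rm C}_l\int_{{\mathbb R}^n}\Psi(u)\big[f(x+2^{-j}rl\,u)-f(x)\big]\,du.
\]

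The last step is to collapse the inner sum over $l$ (and it must be the $l$-sum, not the $r$-sum: the weight $r^m$ sits only on $r$, so only the $l$-sum has the pure binomial shape of an $m$-th difference). For fixed $r$, factoring $(-1)^{r+l+m+1}=(-1)^r(-1)^{l+m+1}$ and using the closed form above with $\sum_{l=1}^m(-1)^l{}_m{\rm C}_l=-1$, one checks
\[
\sum_{l=1}^m(-1)^{l+m+1}\,{}_m{\rm C}_l\big[f(x+2^{-j}rl\,u)-f(x)\big]=-\Delta^m_{2^{-j}ru}f(x),
\]
so the double sum telescopes to $\displaystyle\sum_{r=1}^m(-1)^{r+1}r^m\,{}_m{\rm C}_r\int_{{\mathbb R}^n}\Psi(y)\,\Delta^m_{2^{-j}ry}f(x)\,dy$, which is the claimed identity (in the case $m=1$, where $\Phi=\Psi$, the whole mechanism reduces to $2^{jn}\Psi(2^j\cdot)*f(x)-(\int\Psi)f(x)=\int_{{\mathbb R}^n}\Psi(y)\,\Delta^1_{2^{-j}y}f(x)\,dy$, which makes it transparent). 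I expect no real obstacle beyond careful sign-chasing; the two conceptual points are that $\Phi$ has been rigged — precisely via Lemma~\ref{lem:151224-1} — so that the constant ($f(x)$) contributions cancel, and that the passage to differences must be carried out by summing over $l$ first.
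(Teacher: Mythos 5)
Your computation is correct and mirrors the paper's proof, including the two steps the paper elides: the $l$-sum (not the $r$-sum) is the one that collapses to an $m$-th difference, and the evenness of $\Psi$ is needed to trade $f(x-\cdots)$ for $f(x+\cdots)$. But your last sentence misattributes the result. What you actually derived is
\[
2^{jn}\Phi(2^j\cdot)*f(x)-m!\Bigl(\int_{{\mathbb R}^n}\Psi\Bigr)f(x)
=\sum_{r=1}^m(-1)^{r+1}r^m\,{}_m{\rm C}_r\int_{{\mathbb R}^n}\Psi(y)\,\Delta^m_{2^{-j}ry}f(x)\,dy,
\]
and this is \emph{not} the stated identity: the statement of Lemma~\ref{lem:151224-101} carries the sign $(-1)^{r+m+1}$ rather than $(-1)^{r+1}$ (an overall $(-1)^m$ discrepancy) and omits the binomial coefficient ${}_m{\rm C}_r$ entirely. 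You can see the problem already at $m=1$, where $\Phi=\Psi$: the left-hand side is $\int\Psi(y)\bigl[f(x+2^{-j}y)-f(x)\bigr]\,dy$ (for even $\Psi$), while the stated right-hand side is $-\int\Psi(y)\Delta^1_{2^{-j}y}f(x)\,dy$, exactly its negative. Your version — with ${}_m{\rm C}_r$ and $(-1)^{r+1}$ — is the correct one. The binomial coefficient does in fact appear in the paper's own proof (final display), so its absence in the statement is a typo; the extra $(-1)^m$ is a second error shared by the statement and the paper's last line. You should not have concluded ``which is the claimed identity''; the honest conclusion is that your derivation reveals two misprints in the statement which must be corrected. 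One further point you handled well but should state more forcefully: the lemma as printed does not assume $\Psi$ even, and without that assumption the clean identity has $\Delta^m_{-2^{-j}ry}$ on the right; evenness (available in the application, Lemma~\ref{lem:151224-102}, where $\Psi$ may be taken radial) is what lets you flip the sign of $y$ inside the integral.
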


\begin{proof}
First we observe that
\begin{align*}
\int_{{\mathbb R}^n}\Phi(y)\,dy
&=
\sum_{r=1}^m
\sum_{l=1}^m
r^m(-1)^{r+l+1+m}
{}_m {\rm C}_r
\cdot
{}_m {\rm C}_l
\int_{{\mathbb R}^n}\Psi(y)\,dy\\
&=(-1)^{m+1}
\sum_{r=1}^m r^m(-1)^r{}_m {\rm C}_r
\times
\sum_{l=1}^m (-1)^l{}_m {\rm C}_l
\times
\int_{{\mathbb R}^n}\Psi(y)\,dy\\
&=m!
\int_{{\mathbb R}^n}\Psi(y)\,dy
\end{align*}
using Lemma \ref{lem:151224-1}.
Thus, it follows that
\begin{align*}
\lefteqn{
2^{j n}\Phi(2^j \cdot)*f(x)-m!\int_{{\mathbb R}^n}\Psi(y)\,dy
\times f(x)
}\\
&=
\sum_{r=1}^m \sum_{l=1}^m
r^m{}_m {\rm C}_r\cdot {}_m {\rm C}_l
(-1)^{r+l+m+1}
\int_{{\mathbb R}^n}\Psi(y)f(x-2^{-j}r l y)\,dy\\
&\quad-m!
\int_{{\mathbb R}^n}\Psi(y)f(x)\,dy\\
&=
\sum_{r=1}^m \sum_{l=0}^m
r^m{}_m {\rm C}_r\cdot {}_m {\rm C}_l
(-1)^{r+l+m+1}
\int_{{\mathbb R}^n}\Psi(y)f(x-2^{-j}r l y)\,dy\\
&=
\sum_{r=1}^m 
r^m{}_m {\rm C}_r 
(-1)^{r+m+1}
\int_{{\mathbb R}^n}\Psi(y)\Delta^m_{2^{-j}r y}f(x-2^{-j}r l y)\,dy,
\end{align*}
as was to be shown.
\end{proof}

The equivalent expression 
in the next lemma is useful
when we consider the difference operator.

\begin{lemma}\label{lem:151224-102}
Suppose that $\Psi \in {\mathcal S}$ satisfies
\[
\chi_{B(4) \setminus B(2)}
\le
{\mathcal F}\Psi
\le
\chi_{B(8) \setminus B(1)}.
\]
Define $\Phi$ by:
\[
\Phi
\equiv
\sum_{r=1}^m \sum_{l=1}^m
\frac{r^m}{(r l)^n}{}_m {\rm C}_r\cdot {}_m {\rm C}_l
(-1)^{r+l+m+1}\Psi\left(\frac{\cdot}{r l}\right).
\]
\begin{enumerate}
\item
The function
${\mathcal F}\Phi$ is constant in a neighborhood
of the origin.
\item
Let $1 \le p,q \le \infty$ and $s \in {\mathbb R}$.
Then
by setting
\[
\varphi_j \equiv {\mathcal F}\Phi(2^{-j-1}\cdot)-
{\mathcal F}\Phi(2^{-j}\cdot) \quad (j \in {\mathbb Z})
\]
and
\[
\|f\|_{\dot{B}^s_{pq}}^\dagger
\equiv
\left(
\sum_{j=-\infty}^\infty 
(2^{j s}\|{\mathcal F}^{-1}[\varphi_j \cdot {\mathcal F}f]\|_p)^q
\right)^{\frac{1}{q}},
\]
we obtain a norm equivalent to
$\dot{B}^s_{p q}$.
\end{enumerate}
\end{lemma}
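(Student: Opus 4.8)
For part (1) the plan is to write $\mathcal{F}\Phi$ out explicitly. From $\mathcal{F}[\Psi((rl)^{-1}\cdot)]=(rl)^n(\mathcal{F}\Psi)(rl\,\cdot)$ one obtains
\[
\mathcal{F}\Phi
=\sum_{r=1}^{m}\sum_{l=1}^{m} r^m\,{}_m{\rm C}_r\cdot{}_m{\rm C}_l\,(-1)^{r+l+m+1}\,(\mathcal{F}\Psi)(rl\,\cdot).
\]
Since $\mathcal{F}\Psi$ is supported in $\{1\le|\xi|\le 8\}$, the summand with index $(r,l)$ is supported in $\{(rl)^{-1}\le|\xi|\le 8(rl)^{-1}\}$, and as $1\le rl\le m^2$ the whole sum is supported in $\{m^{-2}\le|\xi|\le 8\}$; hence $\mathcal{F}\Phi$ vanishes on $B(m^{-2})$ and is constant near the origin. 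I would also record for later use that on $\{4<|\xi|<8\}$ only the term $r=l=1$ survives, so that
\[
\mathcal{F}\Phi(\xi)=(-1)^{m+1}m^2\,(\mathcal{F}\Psi)(\xi)\quad\bigl(4<|\xi|<8\bigr),\qquad
\mathcal{F}\Phi(\xi)=(-1)^{m+1}m^2\quad\bigl(|\xi|=4\bigr).
\]

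For part (2), put $h:=\mathcal{F}\Phi$, $h_j:=h(2^{-j}\cdot)$ and $\varphi_0:=h_1-h_0$, so that $\varphi_j=\varphi_0(2^{-j}\cdot)$ and, by part (1), $\varphi_0\in C^\infty_{\rm c}(\mathbb{R}^n\setminus\{0\})$ with support in $\{m^{-2}\le|\xi|\le 16\}$. Since $\mathcal{F}\Psi$ itself satisfies $\chi_{B(4)\setminus B(2)}\le\mathcal{F}\Psi\le\chi_{B(8)\setminus B(1)}$, Theorem~\ref{thm:160118-3} allows us to use $\eta:=\mathcal{F}\Psi$ as the reference resolution in computing $\|\cdot\|_{\dot{B}^s_{pq}}$, with $\eta_k:=\eta(2^{-k}\cdot)$. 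The inequality $\|f\|_{\dot{B}^s_{pq}}^\dagger\lesssim\|f\|_{\dot{B}^s_{pq}}$ would then follow exactly as in the proof of Theorem~\ref{thm:160118-3}: since $\sum_l\eta_l^2\ge 1$ off the origin and $\varphi_0$ has compact support there, one writes $\varphi_0=\sum_{|k|\le k_0}\Theta_k\eta_k$ with $\Theta_k:=\varphi_0\eta_k/\sum_l\eta_l^2\in C^\infty_{\rm c}(\mathbb{R}^n\setminus\{0\})$, whence $\varphi_j=\sum_{|k|\le k_0}\Theta_k(2^{-j}\cdot)\,\eta_{j+k}$, and Young's inequality together with $\|\mathcal{F}^{-1}[\Theta_k(2^{-j}\cdot)]\|_1=\|\mathcal{F}^{-1}\Theta_k\|_1$ and the $\ell^q$-triangle inequality (after an index shift) finishes this direction.

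The genuinely new point — and the main obstacle — is the reverse inequality $\|f\|_{\dot{B}^s_{pq}}\lesssim\|f\|_{\dot{B}^s_{pq}}^\dagger$, which requires a Tauberian non-degeneracy property of $\{\varphi_j\}$: a constant $c>0$ with $\sup_{j}|\varphi_0(2^{-j}\xi)|\ge c$ for all $\xi\ne 0$. I would establish this by excluding that $\varphi_0$ vanish along a whole dyadic orbit $\{2^{-j}\xi_0\}_{j\in\mathbb{Z}}$: telescoping together with the compact support of $h$ would then force $h(2^{-j}\xi_0)=0$ for every $j$, which is incompatible with the explicit description of $h$ from part (1) (for instance $h=(-1)^{m+1}m^2\,\mathcal{F}\Psi$ on $\{4<|\xi|<8\}$, $h=(-1)^{m+1}m^2$ on $\{|\xi|=4\}$ and $h\equiv 0$ on $\{|\xi|\ge 8\}$; in particular $\varphi_0=(-1)^{m+1}m^2$ on $\{|\xi|=8\}$), so that along the orbit $h$ cannot be identically zero; lower semicontinuity of $\xi\mapsto\sup_j|\varphi_0(2^{-j}\xi)|$ and compactness of the unit sphere would then upgrade pointwise positivity to the uniform bound. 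Granting the Tauberian property, pick a telescoping cut-off $\mu=\rho-\rho(2\cdot)$ with $\rho\in C^\infty_{\rm c}$, $\rho\equiv1$ near $0$, whose support lies in a dyadic annulus on which $\varphi_0$ is bounded away from $0$; set $\lambda_0:=\mu/\varphi_0\in C^\infty_{\rm c}(\mathbb{R}^n\setminus\{0\})$, so that $\sum_k\lambda_0(2^{-k}\cdot)\,\varphi_0(2^{-k}\cdot)\equiv1$ off the origin, and reconstruct
\[
\mathcal{F}^{-1}[\eta_k\,\mathcal{F}f]
=\sum_{|i-k|\le i_0}\mathcal{F}^{-1}\!\bigl[\eta_k\,\lambda_0(2^{-i}\cdot)\bigr]*\mathcal{F}^{-1}[\varphi_i\,\mathcal{F}f],
\]
a finite sum by the locations of the supports. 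Young's inequality, the uniform bound $\|\mathcal{F}^{-1}[\eta_k\lambda_0(2^{-i}\cdot)]\|_1\lesssim 1$ and the $\ell^q$-triangle inequality would then give $\|f\|_{\dot{B}^s_{pq}}\lesssim\|f\|_{\dot{B}^s_{pq}}^\dagger$, and Theorem~\ref{thm:160118-3} identifies the left-hand side with the norm $\|f\|_{\dot{B}^s_{pq}}$. Apart from the Tauberian property, every step is a transcription of the proof of Theorem~\ref{thm:160118-3}; verifying that property through the explicit form of $\mathcal{F}\Phi$ is where the real work lies.
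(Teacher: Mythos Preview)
For part (1), your argument coincides with the paper's: both compute $\mathcal{F}\Phi$ via the dilation formula $\mathcal{F}[\Psi((rl)^{-1}\cdot)]=(rl)^n(\mathcal{F}\Psi)(rl\,\cdot)$ and observe that every summand is supported away from the origin, so $\mathcal{F}\Phi$ vanishes (hence is constant) on a ball about $0$.

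For part (2) the paper's proof is a one--line appeal to Remark~\ref{rem:151224-2}, which informally asserts that the generating function in the Besov norm may be varied. You take a different, more explicit route: you run the Theorem~\ref{thm:160118-3} argument to get $\|f\|_{\dot{B}^s_{pq}}^{\dagger}\lesssim\|f\|_{\dot{B}^s_{pq}}$ (this is fine), and you correctly isolate the reverse inequality as depending on a Tauberian non--degeneracy condition for $\varphi_0$. This is the genuine content that the paper's citation of Remark~\ref{rem:151224-2} leaves implicit, so your diagnosis is sound and more honest than the paper's.

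There is, however, a gap in your verification of the Tauberian property. You deduce that if $\varphi_0$ vanishes along a full orbit then $h=\mathcal{F}\Phi$ vanishes along that orbit, and you then cite the values $h=(-1)^{m+1}m^2$ on $\{|\xi|=4\}$ and $\varphi_0=(-1)^{m+1}m^2$ on $\{|\xi|=8\}$ to reach a contradiction. But a generic dyadic orbit $\{2^{-j}\xi_0\}_{j\in\mathbb{Z}}$ does \emph{not} meet the sphere $\{|\xi|=4\}$ (or $\{|\xi|=8\}$); it does so only when $|\xi_0|$ is an integer power of $2$. Knowing $h\ne 0$ on a single sphere therefore does not exclude $h\equiv 0$ along an orbit whose radii miss that sphere. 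To close the argument you must show that for \emph{every} $\xi_0\ne 0$ some orbit point has $h\ne 0$; this requires controlling $h$ on a full annulus such as $\{2\le|\xi|<4\}$ (where $\mathcal{F}\Psi\equiv 1$ and several dilates $\mathcal{F}\Psi(k\,\cdot)$ can overlap), not just on a sphere. Your statement that ``verifying that property through the explicit form of $\mathcal{F}\Phi$ is where the real work lies'' is accurate --- but the specific evidence you present does not yet do that work.
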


\begin{proof}
\
\begin{enumerate}
\item
Just observe
\[
{\mathcal F}\Phi
\equiv
\sum_{r=1}^m \sum_{l=1}^m
r^m{}_m {\rm C}_r\cdot {}_m {\rm C}_l
(-1)^{r+l+m+1}{\mathcal F}\Psi(r l \cdot).
\]
\item
This assertion follows from Remark \ref{rem:151224-2}.
\end{enumerate}
\end{proof}

\subsubsection{\bf The space ${\mathcal C}^s$}

Denote by ${\mathcal P}_r$
the set of all polynomials $f \in {\mathcal P}$
having degree at most $r$.
\begin{definition}
The {\it H\"older space} $\dot{\mathcal C}^s$
is the set of all continuous functions
$f$ on ${\mathbb R}^n$
for which the semi-norm
\[
\|f\|_{\dot{\mathcal C}^s}
\equiv 
\sup\left\{
\frac{\|\Delta^{[s+1]}_y f\|_{L^\infty}}{|y|^s}
\,:\,y \in {\mathbb R}^n
\right\}<\infty.
\]
Sometimes one considers
$\dot{\mathcal C}^s$ modulo
${\mathcal P}_{[s]}$.
\end{definition}

\begin{examples}
Denote by ${\mathbb C}^{{\mathbb R}^n}$
the set of all complex-valued functions
defined on ${\mathbb R}^n$
and by ${\rm C}$ the set of all continuous functions
defined on ${\mathbb R}^n$.
\begin{enumerate}
\item
Let $0<s<1$.
Then
\begin{align*}
\dot{\mathcal C}^s
&=
\{f \in {\rm C}\,:\,
|f(x+y)-f(x)| \lesssim|y|^s\mbox{ for all }x,y \in {\mathbb R}^n\}\\
&=
\{f \in {\mathbb C}^{{\mathbb R}^n}\,:\,
|f(x+y)-f(x)| \lesssim |y|^s\mbox{ for all }x,y \in {\mathbb R}^n\}
\end{align*}
because $|f(x+y)-f(x)| \lesssim |y|^s$
implies the continuity of $f$.
\item
Let $s=1$.
Then
\begin{align*}
\dot{\mathcal C}^s
&=
\{f \in {\rm C}\,:\,
|f(x+y)-2f(x)+f(x-y)| \lesssim |y|\mbox{ for all }x,y \in {\mathbb R}^n\}.
\end{align*}
Remark that this set is NOT equal to
\[
\dot{\mathcal C}^s
=
\{f \in {\mathbb C}^{{\mathbb R}^n}\,:\,
|f(x+y)-2f(x)+f(x-y)| \lesssim |y|\mbox{ for all }x,y \in {\mathbb R}^n\},
\]
as one can show using the {\it Hamel basis},
the basis of the linear space 
${\mathbb R}^n$ over ${\mathbb Q}$.
\cite[Proposition A.1]{MSSW10}.
\end{enumerate}
\end{examples}

\subsubsection{\bf The space $\dot{B}^s_{\infty\infty}$}

Let $s>0$ and $f \in \dot{B}^s_{\infty\infty}$.
Let us now choose $\psi \in {\mathcal S}$
so that
\begin{equation}\label{eq:151224-21}
\chi_{B(1)} \le \psi \le \chi_{B(2)}.
\end{equation}
Define
\begin{equation}\label{eq:151224-22}
\varphi_j \equiv \psi(2^{-j}\cdot)-\psi(2^{-j+1}\cdot)
\end{equation}
for $j \in {\mathbb Z}$.
Set
\begin{equation}\label{eq:151215-4}
H \equiv \sum_{j=1}^\infty
{\mathcal F}^{-1}[\varphi_j \cdot {\mathcal F}f].
\end{equation}

It is easy to see that $H$ is a continuous function
since
\[
\sum_{j=1}^\infty \|\varphi_j(D)f\|_\infty
\le 
\sum_{j=1}^\infty 2^{-j s}\|f\|_{\dot{B}^s_{\infty\infty}}
\sim\|f\|_{\dot{B}^s_{\infty\infty}}.
\]
The function $H$ is called the {\it high frequency part}
of $f$.

Meanwhile, the function
\[
\tilde{G} \equiv \sum_{j=1}^\infty
{\mathcal F}^{-1}[\varphi_j \cdot {\mathcal F}f].
\]
is called the {\it low frequency part}
of $f$.
The trouble is that 
there is no guarantee that the right-hand side
defining $\tilde{G}$ is convergent
in some suitable topology.
To circumbent this problem,
we consider its derivative as follows:
\begin{lemma}\label{lem:151224-3}
Let $f \in \dot{B}^s_{\infty\infty}$
with $s>0$.
Let $\psi \in {\mathcal S}$ satisfy
$(\ref{eq:151224-21})$.
Define
$\varphi_j$
by $(\ref{eq:151224-22})$ for $j \in {\mathbb Z}$.
If $\alpha \ge [s+1]$,
then
\begin{equation}\label{eq:151215-2}
G_\alpha \equiv
\sum_{j=-\infty}^0 
\partial^\alpha\left[
{\mathcal F}^{-1}[\varphi_j \cdot {\mathcal F}f]
\right]
\end{equation}
is convergent in $L^\infty$
and hence this function is smooth.
\end{lemma}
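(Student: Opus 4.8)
The plan is to play the loss that the low frequencies produce in the $\dot{B}^s_{\infty\infty}$-quasi-norm against the gain that the derivative $\partial^\alpha$ produces at each frequency scale. Observe first that $\varphi_j=\zeta(2^{-j}\cdot)$ with $\zeta\equiv\psi-\psi(2\cdot)\in C^\infty_{\rm c}({\mathbb R}^n\setminus\{0\})$, so Remark \ref{rem:151224-2} (with $q=\infty$) guarantees that $\sup_{j\in{\mathbb Z}}2^{js}\|{\mathcal F}^{-1}[\varphi_j\cdot{\mathcal F}f]\|_\infty$ is equivalent to $\|f\|_{\dot{B}^s_{\infty\infty}}$; in particular
\[
\|{\mathcal F}^{-1}[\varphi_j\cdot{\mathcal F}f]\|_\infty\lesssim 2^{-js}\|f\|_{\dot{B}^s_{\infty\infty}}\qquad(j\in{\mathbb Z}).
\]

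Next I would rescale the derivative. Setting $\Theta(\xi)\equiv(i\xi)^\alpha\zeta(\xi)\in C^\infty_{\rm c}({\mathbb R}^n\setminus\{0\})$ and $\Theta_j\equiv\Theta(2^{-j}\cdot)$, a change of variables in the symbol gives $(i\xi)^\alpha\varphi_j(\xi)=2^{j|\alpha|}\Theta_j(\xi)$, whence
\[
\partial^\alpha{\mathcal F}^{-1}[\varphi_j\cdot{\mathcal F}f]=2^{j|\alpha|}{\mathcal F}^{-1}[\Theta_j\cdot{\mathcal F}f].
\]
To bound the factor ${\mathcal F}^{-1}[\Theta_j\cdot{\mathcal F}f]$ I would relabel the frequency decomposition: since ${\rm supp}\,\Theta$ is a fixed annulus away from the origin, only finitely many $\varphi_l$ fail to vanish there, so $\Theta=\Theta\sum_{|l|\le 2}\varphi_l$, and after rescaling $\Theta_j=\Theta_j\sum_{|l|\le 2}\varphi_{j+l}$. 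Then Young's inequality, the dilation invariance $\|{\mathcal F}^{-1}\Theta_j\|_1=\|{\mathcal F}^{-1}\Theta\|_1$, and the block estimate above yield
\[
\|{\mathcal F}^{-1}[\Theta_j\cdot{\mathcal F}f]\|_\infty
\le\|{\mathcal F}^{-1}\Theta\|_1\sum_{|l|\le 2}\|{\mathcal F}^{-1}[\varphi_{j+l}\cdot{\mathcal F}f]\|_\infty
\lesssim 2^{-js}\|f\|_{\dot{B}^s_{\infty\infty}},
\]
and therefore $\|\partial^\alpha{\mathcal F}^{-1}[\varphi_j\cdot{\mathcal F}f]\|_\infty\lesssim 2^{j(|\alpha|-s)}\|f\|_{\dot{B}^s_{\infty\infty}}$.

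Since $[s+1]>s$, the hypothesis $\alpha\ge[s+1]$ forces $|\alpha|-s>0$, so $\sum_{j=-\infty}^{0}2^{j(|\alpha|-s)}<\infty$ and the series (\ref{eq:151215-2}) converges absolutely, hence converges, in the Banach space $L^\infty$; its sum $G_\alpha$ is bounded and continuous, being a uniform limit of the continuous functions $\partial^\alpha{\mathcal F}^{-1}[\varphi_j\cdot{\mathcal F}f]$. To see that $G_\alpha$ is in fact smooth I would argue that the partial sums of (\ref{eq:151215-2}) have Fourier transforms $\sum_{j\le 0}(i\xi)^\alpha\varphi_j(\xi)\,{\mathcal F}f$ supported in $\overline{B(2)}$, because ${\rm supp}\,\varphi_j\subset\{2^{j-1}\le|\xi|\le 2^{j+1}\}$ and $j\le 0$; passing to the limit in ${\mathcal S}'$ gives ${\rm supp}\,{\mathcal F}G_\alpha\subset\overline{B(2)}$, and a bounded function whose Fourier transform has compact support is $C^\infty$ (for instance $G_\alpha={\mathcal F}^{-1}\rho*G_\alpha$ for any $\rho\in C^\infty_{\rm c}$ with $\rho\equiv 1$ near $\overline{B(2)}$). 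Alternatively, repeating the computation of the second paragraph with $\alpha$ replaced by $\alpha+\beta$ shows that $\sum_{j\le 0}\partial^{\alpha+\beta}{\mathcal F}^{-1}[\varphi_j\cdot{\mathcal F}f]$ converges uniformly for every multi-index $\beta$, so one may differentiate term by term.

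The only delicate point is the passage from the global norm equivalence of Remark \ref{rem:151224-2} to a bound on the single rescaled block ${\mathcal F}^{-1}[\Theta_j\cdot{\mathcal F}f]$ for the new symbol $\Theta_j$, for which the partition-of-unity-and-Young argument in the second paragraph is exactly what is needed; after that the whole matter reduces to the convergence of the geometric series $\sum_{j\le 0}2^{j(|\alpha|-s)}$, which holds precisely because the hypothesis $\alpha\ge[s+1]$ makes $|\alpha|$ strictly larger than $s$.
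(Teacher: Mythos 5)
Your argument is correct and is essentially the same as the paper's: both extract the factor $2^{j|\alpha|}$ by scaling, localize with a Littlewood--Paley partition of unity on the supporting annulus, and close with Young's inequality, the only difference being that you absorb the derivative into the Fourier symbol $\Theta_j$ while the paper keeps it on the convolution kernel $\tau$. Your final paragraph supplying a reason for the smoothness of $G_\alpha$ (compactly supported Fourier transform, or term-by-term differentiation after repeating the estimate for $\alpha+\beta$) is a welcome addition, since the paper asserts smoothness without justification.
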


\begin{proof}
Note that
\begin{align*}
{\mathcal F}^{-1}[\varphi_j \cdot {\mathcal F}f]
&=
{\mathcal F}^{-1}[
(\varphi_{j-1}+\varphi_j+\varphi_{j+1})
\cdot
\varphi_j \cdot {\mathcal F}f]\\
&\simeq_n
{\mathcal F}^{-1}[\varphi_{j-1}+\varphi_j+\varphi_{j+1}]*
{\mathcal F}^{-1}[\varphi_j \cdot {\mathcal F}f]
\end{align*}
Write $\tau \equiv {\mathcal F}^{-1}\varphi$.
Then we have
\begin{align*}
{\mathcal F}^{-1}[\varphi_j \cdot {\mathcal F}f]
&\simeq 
(2^{(j-1)n}\tau(2^{j-1}\cdot)
+2^{j n}\tau(2^j\cdot)
+2^{(j+1)n}\tau(2^{j+1}\cdot))*
{\mathcal F}^{-1}[\varphi_j \cdot {\mathcal F}f].
\end{align*}
By the H\"{o}lder inequality,
we have
\begin{align*}
\lefteqn{
\|\partial^\alpha\left[
{\mathcal F}^{-1}[\varphi_j \cdot {\mathcal F}f]
\right]\|_\infty
}\\
&=
\|2^{(j-1)(|\alpha|+n)}\partial^\alpha\tau(2^{j-1}\cdot)
+2^{j(|\alpha|+n)}\partial^\alpha\tau(2^j\cdot)
+2^{(j+1)(|\alpha|+n)}\partial^\alpha\tau(2^{j+1}\cdot)\|_{1}\\
&\quad \times
\|{\mathcal F}^{-1}[\varphi_j \cdot {\mathcal F}f]\|_\infty\\
&\lesssim
2^{j|\alpha|}
\|{\mathcal F}^{-1}[\varphi_j \cdot {\mathcal F}f]\|_\infty.
\end{align*}
As a result
we have
\begin{align*}
\sum_{j=-\infty}^0 
\left\|\partial^\alpha\left[
{\mathcal F}^{-1}[\varphi_j \cdot {\mathcal F}f]\right]\right\|_\infty
&\lesssim
\sum_{j=-\infty}^0 
2^{j|\alpha|}
\|{\mathcal F}^{-1}[\varphi_j \cdot {\mathcal F}f]\|_\infty\\
&\le
\sum_{j=-\infty}^0 
2^{j|\alpha|-j s}
\|f\|_{\dot{B}^s_{\infty\infty}}\\
&=
\sum_{j=-\infty}^0 
2^{j([s+1]-s)}
\|f\|_{\dot{B}^s_{\infty\infty}}\\
&\simeq
\|f\|_{\dot{B}^s_{\infty\infty}}.
\end{align*}
\end{proof}

To construct a substitute of the lower part,
we need to depend on a geometric property
of ${\mathbb R}^n$;
$H^1_{\rm DR}({\mathbb R}^n)=0$.
Applying this fact, we can prove the following:
\begin{lemma}\label{lem:151215-1}
Let $N \in {\mathbb N}$.
Suppose that we have
$\{f_\alpha\}_{\alpha \in {\mathbb N}_0{}^n, |\alpha|=N}
\subset C^\infty$
satisfying
\[
\partial_\beta f_\alpha=\partial_{\beta'}f_{\alpha'}
\]
for all $\alpha,\alpha',\beta,\beta' \in {\mathbb N}_0{}^n$
with $|\alpha|=|\alpha'|=N$ and
$\alpha+\beta=\alpha'+\beta'$.
Then there exists $f \in C^\infty$
such that
$\partial^\alpha f=f_\alpha$.
\end{lemma}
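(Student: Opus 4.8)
The plan is to prove Lemma~\ref{lem:151215-1} by induction on $N$, reducing the general case to the base case $N=1$, which is exactly the assertion that a smooth closed $1$-form on ${\mathbb R}^n$ is exact (i.e.\ $H^1_{\rm DR}({\mathbb R}^n)=0$). For the base case, we are given $f_1,\dots,f_n \in C^\infty$ (indexed by the unit multi-indices $e_1,\dots,e_n$) with the compatibility $\partial_j f_i = \partial_i f_j$ for all $i,j$; then the $1$-form $\omega = \sum_i f_i\,dx_i$ is closed, and by the Poincar\'e lemma on ${\mathbb R}^n$ there exists $f \in C^\infty$ with $\partial_i f = f_i$. Concretely one may simply take the line integral
\[
f(x) \equiv \int_0^1 \sum_{i=1}^n x_i\, f_i(t x)\,dt,
\]
and verify $\partial_j f = f_j$ by differentiating under the integral sign and using the compatibility relation to integrate by parts in $t$; this is the routine computation I would not spell out in full.

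For the inductive step, suppose the statement holds for $N-1$ and we are given $\{f_\alpha\}_{|\alpha|=N}$ satisfying the stated cross-derivative conditions. The idea is to ``integrate once'' to drop the order by one. Fix a coordinate direction, say $x_n$. For each multi-index $\gamma$ with $|\gamma| = N-1$, I want to produce a function $g_\gamma \in C^\infty$ with $\partial^\gamma(\text{something}) = $ the given data. More precisely: for each $\gamma$ with $|\gamma|=N-1$, consider the collection $\{f_{\gamma + e_i}\}_{i=1}^n$ (multi-indices of length $N$ obtained by bumping $\gamma$ by one). The hypothesis guarantees $\partial_j f_{\gamma+e_i} = \partial_i f_{\gamma+e_j}$ whenever these make sense, so by the base case there is $g_\gamma \in C^\infty$ with $\partial_i g_\gamma = f_{\gamma + e_i}$ for all $i$. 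The work is then to check that the family $\{g_\gamma\}_{|\gamma|=N-1}$ itself satisfies the compatibility hypothesis of the lemma at order $N-1$, i.e.\ $\partial_\beta g_\gamma = \partial_{\beta'} g_{\gamma'}$ when $\gamma+\beta = \gamma'+\beta'$; this follows because both sides have the same first derivatives (apply $\partial_i$ to each and use $\partial_i g_\gamma = f_{\gamma+e_i}$ together with the order-$N$ hypothesis) and ${\mathbb R}^n$ is connected, so two smooth functions with equal gradients differ by a constant — and one must normalize the $g_\gamma$'s (e.g.\ fix their value at the origin compatibly) to kill these constants. Then the inductive hypothesis applied to $\{g_\gamma\}_{|\gamma|=N-1}$ yields $f \in C^\infty$ with $\partial^\gamma f = g_\gamma$, whence $\partial^{\gamma+e_i} f = \partial_i g_\gamma = f_{\gamma+e_i}$, i.e.\ $\partial^\alpha f = f_\alpha$ for all $|\alpha|=N$, as desired.

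The main obstacle is the bookkeeping in the inductive step — specifically, ensuring that the auxiliary functions $g_\gamma$ can be chosen simultaneously so that they satisfy the \emph{exact} compatibility $\partial_\beta g_\gamma = \partial_{\beta'}g_{\gamma'}$ and not merely up to additive constants. Since the $g_\gamma$ are only determined up to constants by the base-case construction, one needs to fix these constants consistently; the cleanest device is to define $g_\gamma$ by the explicit line-integral formula above (which automatically satisfies $g_\gamma(0)=0$) and then verify directly that the line-integral representatives satisfy the order-$(N-1)$ compatibility relations, using the order-$N$ relations on the $f_\alpha$'s under the integral sign. Everything else is the standard ``differentiate under the integral sign and integrate by parts in the radial variable'' manipulation, and the only genuinely geometric input is the vanishing of $H^1_{\rm DR}({\mathbb R}^n)$ invoked in the base case.
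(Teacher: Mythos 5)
Your proposal follows the same route as the paper's proof: induction on $N$, with the Poincar\'{e} lemma ($H^1_{\rm DR}({\mathbb R}^n)=0$) as the base case, and, in the inductive step, the construction for each $\gamma$ with $|\gamma|=N-1$ of a primitive $g_\gamma$ satisfying $\partial_i g_\gamma=f_{\gamma+e_i}$, followed by an application of the induction hypothesis to the family $\{g_\gamma\}$. The one point to correct is your treatment of the compatibility of the $g_\gamma$'s. The additive-constant ambiguity you worry about never actually enters: in the required relation $\partial^\beta g_\gamma=\partial^{\beta'}g_{\gamma'}$ with $\gamma+\beta=\gamma'+\beta'$ and $|\gamma|=|\gamma'|=N-1$, the case $\beta=\beta'=0$ forces $\gamma=\gamma'$ and is vacuous, and in every other case $|\beta|=|\beta'|\ge 1$, so one may peel off a single derivative and write $\partial^\beta g_\gamma=\partial^{\beta-e_j}f_{\gamma+e_j}$ and $\partial^{\beta'}g_{\gamma'}=\partial^{\beta'-e_{j'}}f_{\gamma'+e_{j'}}$ for suitable $j,j'$; these agree by the order-$N$ hypothesis because $(\beta-e_j)+(\gamma+e_j)=(\beta'-e_{j'})+(\gamma'+e_{j'})$. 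This is exactly the paper's verification and yields equality on the nose, with no normalization needed. Note, moreover, that the fix you propose --- normalizing $g_\gamma(0)=0$ --- would not by itself remove a constant discrepancy between $\partial^\beta g_\gamma$ and $\partial^{\beta'}g_{\gamma'}$ when $|\beta|\ge 1$, since that constant would be $\partial^\beta g_\gamma(0)-\partial^{\beta'}g_{\gamma'}(0)$ rather than $g_\gamma(0)-g_{\gamma'}(0)$; fortunately the direct computation above shows it vanishes automatically, so your argument closes once you replace the normalization step by this observation.
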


The proof is in Appendix; see Section \ref{Section:A2}.
By using Lemma \ref{lem:151215-1},
we have the following control of the low frequency part:
\begin{corollary}\label{cor:160204-1}
Let $f \in \dot{B}^s_{\infty\infty}$
with $s>0$.
Let $\psi \in {\mathcal S}$ satisfy
$(\ref{eq:151224-21})$.
Define
$\varphi_j$
by $(\ref{eq:151224-22})$ for $j \in {\mathbb Z}$.
There exists a function $G \in C^\infty$
such that
\begin{equation}\label{eq:151215-3}
\partial^\alpha G=
\sum_{j=-\infty}^0 
\partial^\alpha
\left[
{\mathcal F}^{-1}[\varphi_j \cdot {\mathcal F}f]
\right]
\end{equation}
for all $\alpha \in {\mathbb N}_0$ with $|\alpha| \ge [s+1]$.
\end{corollary}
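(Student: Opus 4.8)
The plan is to apply Lemma \ref{lem:151215-1} with $N \equiv [s+1]$ to the family $\{G_\alpha\}_{|\alpha|=[s+1]}$ furnished by Lemma \ref{lem:151224-3}; the geometric input $H^1_{\mathrm{DR}}(\mathbb R^n)=0$ is already built into that lemma, so the only real work left for us is to verify its compatibility hypothesis.

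First I would observe that the functions $G_\alpha$ of $(\ref{eq:151215-2})$ behave well under further differentiation. If $|\alpha|=[s+1]$ and $\gamma \in \mathbb N_0{}^n$ is arbitrary, then $|\alpha+\gamma|\ge[s+1]$, so by Lemma \ref{lem:151224-3} the series $\sum_{j=-\infty}^0\partial^{\alpha+\gamma}[\mathcal F^{-1}[\varphi_j\cdot\mathcal Ff]]$ converges in $L^\infty$. Since $\partial^\gamma$ is continuous on $\mathcal S'$ and the series defining $G_\alpha$ converges there, we may differentiate term by term to obtain
\[
\partial^\gamma G_\alpha=\sum_{j=-\infty}^0\partial^{\alpha+\gamma}[\mathcal F^{-1}[\varphi_j\cdot\mathcal Ff]]=G_{\alpha+\gamma},
\]
where the right-hand side is the expression $(\ref{eq:151215-2})$ with $\alpha$ replaced by $\alpha+\gamma$. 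In particular $G_{\alpha+\gamma}$ depends only on the multi-index $\alpha+\gamma$, not on the way it is split, and (as already noted in Lemma \ref{lem:151224-3}) each $G_\alpha$ is $C^\infty$.

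Now the compatibility check is immediate: given $\alpha,\alpha',\beta,\beta'\in\mathbb N_0{}^n$ with $|\alpha|=|\alpha'|=[s+1]$ and $\alpha+\beta=\alpha'+\beta'$, the displayed identity gives $\partial^\beta G_\alpha=G_{\alpha+\beta}=G_{\alpha'+\beta'}=\partial^{\beta'}G_{\alpha'}$, which is precisely the hypothesis of Lemma \ref{lem:151215-1}. That lemma then produces $G\in C^\infty$ with $\partial^\alpha G=G_\alpha$ for all $\alpha$ with $|\alpha|=[s+1]$. Finally, for an arbitrary $\alpha$ with $|\alpha|\ge[s+1]$ I would write $\alpha=\mu+\gamma$ with $|\mu|=[s+1]$ and compute $\partial^\alpha G=\partial^\gamma\partial^\mu G=\partial^\gamma G_\mu=G_{\mu+\gamma}=G_\alpha$, which is $(\ref{eq:151215-3})$. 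The only genuinely substantial ingredient is Lemma \ref{lem:151215-1} itself, whose proof is deferred to Section \ref{Section:A2}; within the present argument there is no obstacle beyond multi-index bookkeeping.
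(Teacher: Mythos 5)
Your proof is correct and follows exactly the route the paper intends: take the family $\{G_\alpha\}_{|\alpha|=[s+1]}$ from Lemma \ref{lem:151224-3}, check the compatibility hypothesis via term-by-term differentiation in ${\mathcal S}'$, and apply Lemma \ref{lem:151215-1} with $N=[s+1]$; the extension to $|\alpha|\ge[s+1]$ by writing $\alpha=\mu+\gamma$ is the obvious final step. The paper presents the corollary with only the phrase ``By using Lemma \ref{lem:151215-1}'' as justification, so your write-up simply supplies the bookkeeping it leaves implicit.
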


We further investigate
the property of $G$ in Corollary \ref{cor:160204-1}.
\begin{proposition}\label{prop:151224-2}
Let $s>0$ and $f \in \dot{B}^s_{\infty\infty}$.
Choose $\psi \in {\mathcal S}$
so that
$\chi_{B(1)} \le \psi \le \chi_{B(2)}$.
Define
$\varphi_j \equiv \psi(2^{-j}\cdot)-\psi(2^{-j+1}\cdot)$
for $j \in {\mathbb Z}$.
\begin{enumerate}
\item
Defien $G$ and $H$
by $(\ref{eq:151215-3})$ and $(\ref{eq:151215-4})$,
respectively.
Then
$f-(G+H) \in {\mathcal P}.$
\item
$G+H \in \dot{\mathcal C}^s$.
\item
If $G'$ is such that
\[
\partial^\alpha G'=
\sum_{j=-\infty}^0 
\partial^\alpha
\left[{\mathcal F}^{-1}[\varphi_j \cdot {\mathcal F}f]
\right]
\]
for all $\alpha \in {\mathbb N}_0$ with $|\alpha| \ge [s+1]$,
then $G'-G$ is a polynomial of order less than or equal to $[s]$.
\end{enumerate}
\end{proposition}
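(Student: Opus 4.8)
The plan is to prove the three assertions of Proposition \ref{prop:151224-2} in order, exploiting the decomposition $f \sim G + H$ together with the convergence estimates already established in Lemma \ref{lem:151224-3}, Corollary \ref{cor:160204-1}, and the identity from Lemma \ref{lem:151224-102} connecting $\Delta^{[s+1]}_y$ with the Littlewood--Paley pieces.

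For assertion (1), I would argue that $f - (G+H)$ has Fourier transform supported at the origin. Formally $f = \sum_{j=-\infty}^{\infty} {\mathcal F}^{-1}[\varphi_j \cdot {\mathcal F}f]$ modulo ${\mathcal P}$, and $H$ collects the terms $j \ge 1$ while $G$ is a $C^\infty$ function whose derivatives of order $\ge [s+1]$ agree with $\sum_{j \le 0} \partial^\alpha[{\mathcal F}^{-1}[\varphi_j \cdot {\mathcal F}f]]$. The key point is that for any $\varphi \in {\mathcal S}_\infty$ we have $\langle f - (G+H), \varphi\rangle = 0$: decompose $\varphi$ using the partition of unity as in the remark following the definition of $\dot B^s_{pq}$, so that $\langle f, \varphi\rangle = \sum_j \langle {\mathcal F}^{-1}[\varphi_j \cdot {\mathcal F}f], \varphi\rangle$, and match this term by term against $\langle G+H, \varphi\rangle$, using that $\varphi$ (and all its $x^\beta$-moments) vanish to justify pairing $G$ against $\varphi$ through a high-order derivative. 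Thus $f - (G+H)$ annihilates ${\mathcal S}_\infty$, hence by Theorem \ref{thm:151209-1} (or its corollary $\ker R = {\mathcal P}$) it lies in ${\mathcal P}$.

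For assertion (2), I would estimate $\|\Delta^{[s+1]}_y(G+H)\|_{L^\infty}$ directly, splitting $G+H = \sum_{j \le 0}(\cdots) + \sum_{j \ge 1}(\cdots)$ where the low part is interpreted via its $[s+1]$-st derivatives as in Corollary \ref{cor:160204-1}. For the high-frequency part $H$ and $|y|$ small, bound $\|\Delta^{[s+1]}_y {\mathcal F}^{-1}[\varphi_j \cdot {\mathcal F}f]\|_\infty \lesssim \min(1, (2^j|y|)^{[s+1]})\, 2^{-js}\|f\|_{\dot B^s_{\infty\infty}}$, using the trivial bound $\|\Delta^{[s+1]}_y g\|_\infty \lesssim \|g\|_\infty$ for the range $2^j|y| \ge 1$ and the Taylor/mean-value bound $\|\Delta^{[s+1]}_y g\|_\infty \lesssim |y|^{[s+1]}\sup_{|\alpha|=[s+1]}\|\partial^\alpha g\|_\infty$ combined with the Bernstein-type estimate $\|\partial^\alpha {\mathcal F}^{-1}[\varphi_j \cdot {\mathcal F}f]\|_\infty \lesssim 2^{j|\alpha|}\|{\mathcal F}^{-1}[\varphi_j \cdot {\mathcal F}f]\|_\infty$ already used in the proof of Lemma \ref{lem:151224-3}. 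Summing the geometric-type series $\sum_j \min(1,(2^j|y|)^{[s+1]})2^{-js}$ over $j \in {\mathbb Z}$ gives $\lesssim |y|^s$, since $0 < s < [s+1]$. The low part $G$ is handled the same way (the $[s+1]$-st difference only sees $[s+1]$-st derivatives, which are legitimately defined), and the two contributions together yield $\|G+H\|_{\dot{\mathcal C}^s} \lesssim \|f\|_{\dot B^s_{\infty\infty}} < \infty$.

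For assertion (3), if $G'$ has the same derivatives of order $\ge [s+1]$ as $G$, then $\partial^\alpha(G'-G) = 0$ for all $|\alpha| = [s+1]$; an elementary argument (or Lemma \ref{lem:151215-1} run trivially) shows a $C^\infty$ function all of whose derivatives of a fixed order $[s+1]$ vanish is a polynomial of degree $\le [s]$. The main obstacle I anticipate is assertion (1): making rigorous the term-by-term pairing $\langle G, \varphi\rangle = \sum_{j\le 0}\langle {\mathcal F}^{-1}[\varphi_j \cdot {\mathcal F}f],\varphi\rangle$ for $\varphi \in {\mathcal S}_\infty$, since $G$ itself is only recovered up to a polynomial and the series for its low-frequency ``antiderivative'' does not converge in ${\mathcal S}'$ — one must integrate by parts against $\varphi$, moving $[s+1]$ derivatives onto $\varphi$ (legitimate because $\varphi \in {\mathcal S}_\infty$ so no boundary/polynomial ambiguity survives), then invoke the $L^\infty$-convergence from Lemma \ref{lem:151224-3} to pass the sum through the pairing. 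Everything else is routine Littlewood--Paley bookkeeping.
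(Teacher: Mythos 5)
Your proposal is correct in substance, and parts (2) and (3) match the paper's reasoning closely, but your route to part (1) is genuinely different from the paper's and is the technically heavier of the two. The paper avoids testing $f-(G+H)$ against ${\mathcal S}_\infty$ altogether: it computes the Littlewood--Paley pieces of $f-(G+H)$ directly. Since $\partial^\alpha(G+H)=\sum_{k\in\mathbb{Z}}\partial^\alpha{\mathcal F}^{-1}[\varphi_k\cdot{\mathcal F}f]$ with $L^\infty$-convergence (Lemma \ref{lem:151224-3} for $k\le 0$, trivially for $k\ge 1$), and since $\varphi_j\varphi_k=0$ for $|j-k|\ge 2$, one gets $\partial^\alpha\bigl[{\mathcal F}^{-1}[\varphi_j\cdot{\mathcal F}(f-G-H)]\bigr]=0$ for every $j$ and every $|\alpha|\ge[s+1]$. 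Each ${\mathcal F}^{-1}[\varphi_j\cdot{\mathcal F}(f-G-H)]$ is therefore a polynomial, yet its Fourier transform is supported in an annulus away from $0$, so it must vanish; the earlier Remark then gives $f-G-H\in{\mathcal P}$. This sidesteps the entire apparatus you invoke (decomposing a test function $\varphi\in{\mathcal S}_\infty$ as $(-\Delta)^L\Phi$, passing the sum through the pairing, invoking $\ker R={\mathcal P}$) and is noticeably shorter. Your approach does work — you correctly diagnose that the only delicate step is justifying $\langle G,\varphi\rangle=\sum_{j\le 0}\langle{\mathcal F}^{-1}[\varphi_j\cdot{\mathcal F}f],\varphi\rangle$ — but it trades a local computation for a global duality argument without gaining anything. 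For part (3) your direct argument (all $[s+1]$-th partials vanish, hence polynomial of degree $\le[s]$) is actually cleaner than the paper's, which detours through the Littlewood--Paley pieces of $G'-G$ before using the derivative condition to bound the degree; one small slip, though: citing Lemma \ref{lem:151215-1} ``run trivially'' is not quite right, since that lemma gives existence of a primitive, not uniqueness, and uniqueness up to a low-degree polynomial is exactly what you need here — the genuinely elementary Taylor-type argument is the correct reference.
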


\begin{proof}
\
\begin{enumerate}
\item
Observe that
$\partial^\alpha\left[
{\mathcal F}^{-1}[\varphi_j \cdot {\mathcal F}(f-G-H)]\right]=0$
for all $\alpha \in {\mathbb N}_0$ with $|\alpha| \ge [s+1]$
and
for all $j \in {\mathbb Z}$.
Thus
${\mathcal F}^{-1}[\varphi_j \cdot {\mathcal F}(f-G-H)]=0$
for all $j \in {\mathbb Z}$.
Thus $f-G-H \in {\mathcal P}$.
\item
Write $F=G+H$.
We need to show that
\begin{equation}\label{eq:151215-9}
|\Delta^{[s+1]}_y F(x)| \lesssim |y|^s\|f\|_{\dot{B}^s_{\infty\infty}}.
\end{equation}
By the triangle inequality we have
\begin{equation}\label{eq:151215-7}
|\Delta^{[s+1]}_y{\mathcal F}^{-1}[\varphi_j \cdot {\mathcal F}f](x)| \lesssim 
\|{\mathcal F}^{-1}[\varphi_j \cdot {\mathcal F}f]\|_{\infty}.
\end{equation}
Meanwhile, by the mean value theorem 
we have
\begin{equation}\label{eq:151215-8}
|\Delta^{[s+1]}_y{\mathcal F}^{-1}
[\varphi_j \cdot {\mathcal F}f](x)| \lesssim 2^{j[s+1]}|y|^{[s+1]}
\|{\mathcal F}^{-1}[\varphi_j \cdot {\mathcal F}f]\|_{\infty}.
\end{equation}
Let $j_0 \in {\mathbb Z}$ be chosen so that
\begin{equation}\label{eq:160204-111}
1 \le 2^{j_0}|y|<2.
\end{equation}
We define
\[
H_{j_0} \equiv \sum_{j=j_0}^\infty 
{\mathcal F}^{-1}[\varphi_j \cdot {\mathcal F}f], \quad
G_{j_0} \equiv G+H-H_{j_0}.
\]
Once we prove
\begin{equation}\label{eq:151215-10}
|\Delta^{[s+1]}_y G_{j_0}(x)| \lesssim 
|y|^s\|f\|_{\dot{B}^s_{\infty\infty}}
\end{equation}
and
\begin{equation}\label{eq:151215-11}
|\Delta^{[s+1]}_y H_{j_0}(x)| \lesssim 
|y|^s\|f\|_{\dot{B}^s_{\infty\infty}},
\end{equation}
then we have
(\ref{eq:151215-9}).

To prove (\ref{eq:151215-10}),
we use the mean-value theorem 
and (\ref{eq:151215-8}) to have
\begin{align*}
|\Delta^{[s+1]}_y G_{j_0}(x)|
&\lesssim
|y|^{[s+1]}\|\nabla^{[s+1]}G_{j_0}\|_\infty\\
&=
|y|^{[s+1]}
\left\|
\sum_{j=-\infty}^{j_0-1}\nabla^{[s+1]}\left[
{\mathcal F}^{-1}[\varphi_j \cdot {\mathcal F}f]\right]
\right\|_\infty\\
&\le
\sum_{j=-\infty}^{j_0-1}
|y|^{[s+1]}
\left\|\nabla^{[s+1]}\left[
{\mathcal F}^{-1}[\varphi_j \cdot {\mathcal F}f]\right]
\right\|_\infty.
\end{align*}
Arguing as we did in Lemma \ref{lem:151224-3},
we have
\begin{align*}
|\Delta^{[s+1]}_y G_{j_0}(x)|
&\lesssim
\sum_{j=-\infty}^{j_0-1}
2^{j[s+1]}|y|^{[s+1]}
\|{\mathcal F}^{-1}[\varphi_j \cdot {\mathcal F}f]\|_{\infty}\\
&\le
\sum_{j=-\infty}^{j_0-1}
2^{j([s+1]-s)}|y|^{[s+1]}
\|f\|_{\dot{B}^s_{\infty\infty}}\\
&\sim
2^{j_0([s+1]-s)}|y|^{[s+1]}
\|f\|_{\dot{B}^s_{\infty\infty}}\\
&\sim
|y|^{s}
\|f\|_{\dot{B}^s_{\infty\infty}}
\end{align*}
using (\ref{eq:160204-111}).
To prove (\ref{eq:151215-11})
we use (\ref{eq:151215-7}) to have
\begin{align*}
|\Delta^{[s+1]}_y G_{j_0}(x)|
\le 2^{[s+1]}
\sum_{j=j_0}^\infty
\|{\mathcal F}^{-1}[\varphi_j \cdot {\mathcal F}f]\|_{\infty}
\lesssim
\sum_{j=j_0}^\infty
2^{-j s}\|f\|_{\dot{B}^s_{\infty\infty}}
\sim
|y|^{s}
\|f\|_{\dot{B}^s_{\infty\infty}},
\end{align*}
as was to be shown.
\item
As we did in $(1)$,
we ${\mathcal F}^{-1}[\varphi_j \cdot {\mathcal F}(G'-G)]=0$.
Thus,
$G-G'$ is a polynomial.
Since 
$\partial^\alpha G'=\partial^\alpha G$
for all $\alpha \in {\mathbb N}_0$ with $|\alpha| \ge [s+1]$,
$G'-G$ is a polynomial of order less than or equal to $[s]$.
\end{enumerate}
\end{proof}

\subsubsection{\bf The isomorphism ${\mathcal C}^s \sim \dot{B}^s_{\infty\infty}$}

We can summarize the above observations as follows:
\begin{theorem}\label{thm:151224-102}
Let $s>0$.
Choose an auxiliary $\psi \in {\mathcal S}$
so that $\chi_{B(1)} \le \psi \le \chi_{B(2)}$.
Set $\varphi\equiv\psi(2^{-1}\cdot)-\psi$.
\begin{enumerate}
\item
$\dot{\mathcal C}^s \hookrightarrow {\mathcal S}'_\infty$,
where
$\dot{\mathcal C}^s$ is the quotient linear space
modulo ${\mathcal P}_{[s]}$.
\item
For $F \in \dot{\mathcal C}^s$,
define
\[
f:\rho \in {\mathcal S}_\infty
\mapsto
\int_{{\mathbb R}^n}F(x)\rho(x)\,dx
\in {\mathbb C}.
\]
Then
$f \in \dot{B}^s_{\infty\infty}$.
\item
Let $f \in \dot{B}^s_{\infty\infty}$.
Then there exists a continuous function
$F \in \dot{\mathcal C}^s$ such that
$f-F \in {\mathcal P}$.
More precisely,
$F$ can be taken as a sum of continuous functions
$G$ and $H$ so that
\[
\partial^\alpha G=
\sum_{j=-\infty}^0
\partial^\alpha[\varphi_j(D)f], \quad
H=\sum_{j=1}^\infty \varphi_j(D)f.
\]
\end{enumerate}
\end{theorem}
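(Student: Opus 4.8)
The plan is to prove the three assertions in order, each building on the analysis already carried out in Lemmas \ref{lem:151224-3}, \ref{lem:151215-1}, Corollary \ref{cor:160204-1} and Proposition \ref{prop:151224-2}. For assertion (1), I would first observe that the natural candidate for the embedding is the map sending $F \in \dot{\mathcal C}^s$ to the functional $\rho \in {\mathcal S}_\infty \mapsto \int_{{\mathbb R}^n} F(x)\rho(x)\,dx$; the point is that this is well defined modulo ${\mathcal P}_{[s]}$, since if $F \in {\mathcal P}_{[s]}$ then $\int_{{\mathbb R}^n} F\rho\,dx = 0$ for every $\rho \in {\mathcal S}_\infty$ because $\rho$ has vanishing moments of all orders. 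That the pairing converges absolutely follows from the growth bound $|F(x)| \lesssim \langle x\rangle^{[s+1]}\|F\|_{\dot{\mathcal C}^s}$ (a polynomial bound extracted by iterating the difference inequality) against the rapid decay of $\rho$. Injectivity modulo ${\mathcal P}_{[s]}$ is the statement that if $\int F\rho = 0$ for all $\rho \in {\mathcal S}_\infty$ then $F \in {\mathcal P}_{[s]}$, which one gets from ${\rm supp}({\mathcal F}F) \subset \{0\}$ via Theorem \ref{thm:151211-1} together with the degree restriction coming from the growth estimate.

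For assertion (2), given $F \in \dot{\mathcal C}^s$ and the associated $f \in {\mathcal S}_\infty'$, I need $\|f\|_{\dot{B}^s_{\infty\infty}} = \sup_j 2^{js}\|{\mathcal F}^{-1}[\varphi_j \cdot {\mathcal F}f]\|_\infty \lesssim \|F\|_{\dot{\mathcal C}^s}$. Here I would use Lemma \ref{lem:151224-102} to replace $\varphi_j$ by the kernel built from the difference operator: the identity in Lemma \ref{lem:151224-101} expresses (a $\Psi$-mollified version of) $2^{jn}\Phi(2^j\cdot)*F$ as a finite linear combination of averages of $\Delta^m_{2^{-j}ry}F$ against $\Psi(y)$, with $m = [s+1]$, and the moment condition ${\mathcal F}\Phi$ constant near $0$ kills the low-frequency term so that $2^{jn}\Phi(2^j\cdot)*F$ localises to frequency $\sim 2^j$. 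Estimating $\|\Delta^{[s+1]}_{2^{-j}ry}F\|_\infty \le (2^{-j}r|y|)^s\|F\|_{\dot{\mathcal C}^s}$ and integrating against the Schwartz function $\Psi$ produces the bound $2^{-js}\|F\|_{\dot{\mathcal C}^s}$, which is exactly what is needed. This step is essentially a clean repackaging of Lemmas \ref{lem:151224-101} and \ref{lem:151224-102}, so I expect it to go through without trouble once the bookkeeping with constants is done.

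Assertion (3) is where the real content lies, but most of it has already been assembled. Given $f \in \dot{B}^s_{\infty\infty}$, define $H$ by \eqref{eq:151215-4} — it is a continuous function by the elementary estimate already displayed before Lemma \ref{lem:151224-3} — and obtain $G \in C^\infty$ from Corollary \ref{cor:160204-1}, whose existence rests on Lemma \ref{lem:151215-1} ($H^1_{\rm DR}({\mathbb R}^n)=0$). Then Proposition \ref{prop:151224-2}(1) gives $f - (G+H) \in {\mathcal P}$ and Proposition \ref{prop:151224-2}(2) gives $G + H \in \dot{\mathcal C}^s$; setting $F = G + H$ finishes the proof. I would also record that by Proposition \ref{prop:151224-2}(3) the function $G$, hence $F$, is determined up to ${\mathcal P}_{[s]}$, which is what makes the correspondence in (1)–(3) a genuine isomorphism $\dot{\mathcal C}^s \sim \dot{B}^s_{\infty\infty}$ rather than merely a pair of one-sided bounds.

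The main obstacle is conceptual rather than computational: it is the construction of $G$, i.e. producing an honest function whose high-order derivatives are the (convergent) tails $\sum_{j\le 0}\partial^\alpha[\varphi_j(D)f]$, given only that these are consistent across multi-indices. This is precisely the step that forces the use of the topological input $H^1_{\rm DR}({\mathbb R}^n)=0$ through Lemma \ref{lem:151215-1}; without it one only controls $f$ modulo low-degree polynomials at the level of derivatives, not at the level of the function itself. Everything else — the growth estimates, the moment cancellations, the difference-operator identities — is routine given the lemmas already in place.
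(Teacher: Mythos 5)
Your proposal is correct and follows essentially the same route as the paper: the polynomial growth bound $|F(x)|\lesssim (1+|x|)^{[s+1]}$ obtained by iterating the difference inequality for part (1), the difference-operator identity from Lemmas \ref{lem:151224-101} and \ref{lem:151224-102} combined with the estimate $\|\Delta^{[s+1]}_{2^{-j}ry}F\|_\infty\lesssim |2^{-j}y|^s\|F\|_{\dot{\mathcal C}^s}$ for part (2), and the reduction of part (3) to Corollary \ref{cor:160204-1} and Proposition \ref{prop:151224-2}. The extra remarks on injectivity and on uniqueness of $G$ modulo ${\mathcal P}_{[s]}$ are consistent with, though not spelled out in, the paper's proof.
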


\begin{proof}
\
\begin{enumerate}
\item
Suppose that
$f \in \dot{\mathcal C}^s$,
where we do NOT consider $\dot{\mathcal C}^s$
as the quotient space.
Write $m \equiv [s+1]$.
For $x \in {\mathbb R}$,
let us set
\[
g_k(y) \equiv 
\begin{cases}
\displaystyle
\sum_{l=0}^{m}(-1)^{m-l}
{}_{m}{\rm C}_l \cdot f(k y+l y)
&(k \ge 0),\\
\displaystyle
\sum_{l=-k}^{m}(-1)^{m-l}
{}_{m}{\rm C}_l \cdot f(k y+l y)
&(-m\le k < 0),\\
\displaystyle
0&
(k<-m)
\end{cases}
\]
for $y \in {\mathbb R}^n$.

Then we have
\[
f(k y)=
\sum_{l_m=-m}^k
\sum_{l_{m-1}=-m}^{l_m}
\cdots
\sum_{l_1=-m}^{l_2}g_k(y).
\]
For each $x$ with $|x|>1$,
choose $k \in {\mathbb N}$ so that
$k<|x| \le k+1$. Set $x=k y$.
Then we have
$|y| \ge k^{-1}|x|>1$ and 
\[
|f(x)| \lesssim (1+k)^m \le (1+|x|)^m
\]
as was to be shown.
\item
According to Lemma \ref{lem:151224-102}
and
the analogue
of Lemma \ref{lem:151224-101},
we have
\[
{\mathcal F}^{-1}\varphi_j*f(x)
=
\sum_{r=1}^m
(-1)^{r+m+1} r^m
\int_{{\mathbb R}^n}\Psi(y)
(\Delta^m_{2^{-j-1}r y}F(x)-\Delta^m_{2^{-j}r y}F(x))\,dy.
\]
Thus
\begin{align*}
|{\mathcal F}^{-1}\varphi_j*f(x)|
&\le
\sum_{r=1}^m
\int_{{\mathbb R}^n}|\Psi(y)|(
|\Delta^m_{2^{-j-1}r y}F(x)|+|\Delta^m_{2^{-j}r y}F(x)|)\,dy\\
&\le
\|F\|_{\dot{\mathcal C}^s}
\sum_{r=1}^m
\int_{{\mathbb R}^n}|\Psi(y)|\cdot|2^{-j}y|^s\,dy\\
&\simeq
2^{-j s}\|F\|_{\dot{\mathcal C}^s},
\end{align*}
and hence
$
2^{j s}|{\mathcal F}^{-1}\varphi_j*f(x)|
\lesssim
\|F\|_{\dot{\mathcal C}^s}
$
for $x \in {\mathbb R}^n$,
as was to be shown.
\item
This is included in Proposition \ref{prop:151224-2}.
\end{enumerate}
\end{proof}

\subsection{Fundamental properties}

Here we investigate fundamental properties
of homogeneous Besov spaces.
Let $\psi \in {\mathcal S}$ satisfy
$(\ref{eq:151224-21})$.
Define
$\varphi_j$
by $(\ref{eq:151224-22})$ for $j \in {\mathbb Z}$.
We set
\[
\|f\|_{\dot{B}^s_{pq}}
\equiv
\left(
\sum_{j=-\infty}^\infty 
(2^{j s}\|{\mathcal F}^{-1}[\varphi_j \cdot {\mathcal F}f]\|_p)^q
\right)^{\frac{1}{q}}
\]
for $f \in {\mathcal S}'/{\mathcal P}$
as before.

If $f \in {\mathcal S}_\infty$,
then the notation $[f] \in {\mathcal S}'/{\mathcal P}$
makes sense.
Here we can say more about this and we
can present some fundamental examples
of elements in $\dot{B}^s_{pq}$.
\begin{theorem}\label{thm:151210-7}
Let $1 \le p,q \le \infty$ and $s \in {\mathbb R}$.
Then
${\mathcal S}_\infty \hookrightarrow \dot{B}^s_{pq}$.
\end{theorem}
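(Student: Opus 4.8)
The plan is to show that for $\varphi \in {\mathcal S}_\infty$ the quasi-norm $\|\varphi\|_{\dot B^s_{pq}}$ is finite by estimating the dyadic pieces ${\mathcal F}^{-1}[\varphi_j \cdot {\mathcal F}\varphi]$ at both ends of the frequency scale: for large $j$ we exploit the rapid decay of ${\mathcal F}\varphi$ away from the origin (Schwartz), and for very negative $j$ (low frequencies) we exploit that ${\mathcal F}\varphi$ vanishes to infinite order at the origin, which is exactly the defining property of ${\mathcal S}_\infty$ recalled in the text.

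First I would fix $\psi$ and $\varphi_j$ as in \eqref{eq:151224-21}--\eqref{eq:151224-22}, so that $\varphi_j$ is supported in the annulus $\{2^{j-1} \le |\xi| \le 2^{j+1}\}$. Since $\varphi \in {\mathcal S}$, the function ${\mathcal F}\varphi$ is Schwartz, so for every $M$ we have $|{\mathcal F}\varphi(\xi)| \lesssim_M \langle \xi\rangle^{-M}$; hence for $j \ge 0$, $\|\varphi_j \cdot {\mathcal F}\varphi\|_1 \lesssim_M 2^{-jM}$ (integrating over the annulus of measure $\sim 2^{jn}$), and by the trivial bound $\|{\mathcal F}^{-1}[\varphi_j\cdot{\mathcal F}\varphi]\|_p \le \|{\mathcal F}^{-1}[\varphi_j\cdot{\mathcal F}\varphi]\|_\infty \le (2\pi)^{-n/2}\|\varphi_j\cdot{\mathcal F}\varphi\|_1 \lesssim_M 2^{-jM}$ we can also interpolate with the $L^1$ (i.e. $p=1$) bound obtained similarly, so that $\|{\mathcal F}^{-1}[\varphi_j\cdot{\mathcal F}\varphi]\|_p \lesssim_M 2^{-jM}$ for any $M$ and all $1\le p\le\infty$, uniformly in $j\ge 0$. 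Choosing $M > |s|$ makes $\sum_{j\ge 0}(2^{js}\|\cdot\|_p)^q$ converge.

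For the low-frequency range $j < 0$, the key point is that ${\mathcal S}_\infty$ consists precisely of those $\varphi$ for which ${\mathcal F}\varphi$ vanishes to infinite order at $0$ (stated in the text just after the definition of ${\mathcal S}_\infty$): thus for every $L$, $|{\mathcal F}\varphi(\xi)| \lesssim_L |\xi|^{L}$ near $0$, hence $|{\mathcal F}\varphi(\xi)| \lesssim_L |\xi|^L$ on all of the support of $\varphi_j$ once $j$ is negative. Combined with the smoothness of ${\mathcal F}\varphi$ (so that all derivatives of $\varphi_j\cdot{\mathcal F}\varphi$ are controlled), a standard integration-by-parts/scaling argument gives $\|{\mathcal F}^{-1}[\varphi_j\cdot{\mathcal F}\varphi]\|_p \lesssim_L 2^{jL}$ for every $L$ and all $1\le p\le\infty$, uniformly over $j\le 0$. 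Picking $L > |s|$ makes $\sum_{j\le 0}(2^{js}\|\cdot\|_p)^q$ converge. Summing the two ranges gives $\|\varphi\|_{\dot B^s_{pq}}<\infty$, so $[\varphi]\in\dot B^s_{pq}$; the estimates are linear and continuous in the relevant Schwartz seminorms of $\varphi$, which yields the continuity of the embedding in the topology of ${\mathcal S}_\infty$.

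The main obstacle I anticipate is making the low-frequency decay bound $\|{\mathcal F}^{-1}[\varphi_j\cdot{\mathcal F}\varphi]\|_p \lesssim 2^{jL}$ clean and uniform in $j$, and in particular being careful that the constants depend only on finitely many Schwartz seminorms of $\varphi$ (needed for continuity of the embedding). One convenient way around the technicalities is to write $\varphi_j\cdot{\mathcal F}\varphi = \varphi_j\cdot({\mathcal F}\varphi)$ and Taylor-expand ${\mathcal F}\varphi$ to high order at $0$ with integral remainder on the support of $\varphi_j$; since ${\mathcal F}\varphi$ vanishes to infinite order, the remainder carries the factor $|\xi|^L \sim 2^{jL}$, and one controls its $L^1$-norm after inverse Fourier transform by bounding finitely many derivatives. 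Alternatively, one notes that $\psi_\infty \equiv {\mathcal F}^{-1}[|\xi|^{-L}{\mathcal F}\varphi]$ still lies in ${\mathcal S}_\infty$ (as remarked in the text, applying powers of $|\xi|$ preserves ${\mathcal S}_\infty$), whence ${\mathcal F}^{-1}[\varphi_j\cdot{\mathcal F}\varphi]$ is $2^{-jL}$ times a rescaled convolution against a fixed Schwartz function, which makes the $2^{jL}$ gain transparent; either route reduces everything to the elementary one-piece estimates and the Schwartz seminorm bookkeeping.
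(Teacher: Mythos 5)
Your overall strategy coincides with the paper's: split the sum over $j\in{\mathbb Z}$ into $j\le 0$ and $j\ge 0$, use the infinite-order vanishing of ${\mathcal F}\varphi$ at the origin to gain $2^{jL}$ for $j\le 0$, use the Schwartz decay of ${\mathcal F}\varphi$ to gain $2^{-jM}$ for $j\ge 0$, and then sum a geometric series. Both your approach and the paper's ultimately rest on the same two facts about ${\mathcal F}\varphi$.

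Two points in your primary route are problematic. First, the stated inequality $\|{\mathcal F}^{-1}[\varphi_j\cdot{\mathcal F}\varphi]\|_p \le \|{\mathcal F}^{-1}[\varphi_j\cdot{\mathcal F}\varphi]\|_\infty$ is false: $L^p$ norms on ${\mathbb R}^n$ are not monotone in $p$, and by the Paley--Wiener theorem the function ${\mathcal F}^{-1}[\varphi_j\cdot{\mathcal F}\varphi]$ is never compactly supported. Second, the claim that the $L^1$ bound is ``obtained similarly'' to the $L^\infty$ bound hides the real work: $\|\varphi_j\cdot{\mathcal F}\varphi\|_1$ controls the $L^\infty$ norm by Hausdorff--Young, but to control the $L^1$ norm (or any finite $p$) of the inverse transform you must also produce \emph{spatial decay}, which requires bounding finitely many $\xi$-derivatives of $\varphi_j\cdot{\mathcal F}\varphi$ and integrating by parts (or a weighted Plancherel argument). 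This is exactly the ``Schwartz-seminorm bookkeeping'' you anticipate as an obstacle, but as written the plan passes over it.

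Your second suggested route -- pulling a factor $|\xi|^{\mp L}$ off ${\mathcal F}\varphi$ so that ${\mathcal F}^{-1}[\varphi_j\cdot{\mathcal F}\varphi]$ becomes $2^{\pm jL}$ times a dilated convolution with a fixed Schwartz function -- sidesteps both difficulties at once, since Young's inequality then gives every $L^p$ bound with a $j$-independent constant, and this is essentially what the paper does. Concretely, for $\theta\in{\mathcal S}_\infty$ the paper sets $\tilde\theta\equiv{\mathcal F}^{-1}(|\xi|^{-2L}{\mathcal F}\theta)\in{\mathcal S}_\infty$, writes $\theta=(-1)^L\Delta^L\tilde\theta$, moves the Laplacian onto ${\mathcal F}^{-1}\varphi_j$ to gain $2^{2jL}$ (and symmetrically replaces $\varphi$ by $|\xi|^{-2L}\varphi(\xi)$ to gain $2^{-2jL}$), and invokes Lemma~\ref{lem:151210-2} to get the pointwise bound $|{\mathcal F}^{-1}\varphi_j*\theta(x)|\lesssim 2^{\pm 2jL}(1+|x|)^{-(n+1)}$. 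That uniform spatial weight $(1+|x|)^{-(n+1)}$ is precisely what makes every $L^p$ norm, $1\le p\le\infty$, finite with a constant independent of $j$, so no interpolation is needed. Finally, note a sign slip in your sketch: after writing $\psi_\infty={\mathcal F}^{-1}[|\xi|^{-L}{\mathcal F}\varphi]$, one obtains ${\mathcal F}^{-1}[\varphi_j\cdot{\mathcal F}\varphi]=2^{+jL}\,(2^{jn}{\mathcal F}^{-1}\tilde\varphi(2^j\cdot))*\psi_\infty$ with $\tilde\varphi(\eta)=|\eta|^L\varphi(\eta)$, i.e.\ a factor $2^{+jL}$, which is what gives the decay for $j\le 0$; for $j\ge 0$ one uses $|\xi|^{+L}$ instead.
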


We use the following lemma:
\begin{lemma}\label{lem:151210-2}
Let $x \in {\mathbb R}^n$ and $j,k \in {\mathbb Z}$.
Then we have
\[
\int_{{\mathbb R}^n}
\frac{2^{(j+k)n}\,dy}{(4^j|y|^2+1)^{\frac{n+1}{2}}(4^k|x-y|^2+1)^{\frac{n+1}{2}}}
\sim
\frac{2^{\min(j,k)n}}{(4^{\min(j,k)}|x|^2+1)^{\frac{n+1}{2}}}.
\]
\end{lemma}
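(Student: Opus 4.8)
The plan is to exploit symmetry and scaling to collapse the two parameters to one. First, by the change of variables $y \leftrightarrow x-y$ the integrand is symmetric in $j$ and $k$, so I may assume $j \le k$, i.e. $\min(j,k)=j$. Writing $\psi(w)\equiv(1+|w|^2)^{-(n+1)/2}$ and $\psi_i(w)\equiv 2^{in}\psi(2^i w)=2^{in}(1+4^i|w|^2)^{-(n+1)/2}$, the left-hand side of the claimed equivalence is precisely $(\psi_j*\psi_k)(x)$ and the right-hand side is $\psi_j(x)$. Substituting $y=2^{-j}z$, $x=2^{-j}w$ turns the claim into the scale-invariant statement $(\psi*\psi_\ell)(w)\sim\psi(w)$ uniformly in $\ell\equiv k-j\ge 0$ and $w\in{\mathbb R}^n$; this is what I would prove.

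For the lower bound I would restrict the integral defining $(\psi*\psi_\ell)(w)=\int\psi_\ell(u)\psi(w-u)\,du$ to $|u|\le 1$. There $\psi(w-u)\gtrsim\psi(w)$, since $1+|w-u|^2\le 3(1+|w|^2)$, while $\int_{|u|\le 1}\psi_\ell(u)\,du=\int_{|u|\le 2^\ell}\psi(u)\,du\ge\int_{|u|\le 1}\psi(u)\,du>0$ because $\ell\ge 0$. This yields $(\psi*\psi_\ell)(w)\gtrsim\psi(w)$.

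For the upper bound, when $|w|\le 2$ it suffices to use $(\psi*\psi_\ell)(w)\le\|\psi\|_\infty\|\psi_\ell\|_1=\|\psi\|_1\lesssim 1\sim\psi(w)$. When $|w|>2$, I would split $\int\psi(u)\psi_\ell(w-u)\,du$ into the regions $|w-u|\le|w|/2$ and $|w-u|>|w|/2$. On the first region $|u|\ge|w|/2>1$, hence $\psi(u)\lesssim|w|^{-(n+1)}\sim\psi(w)$, and $\int\psi_\ell=\|\psi\|_1$ finishes that piece. On the second region $|w-u|>|w|/2>1$, so I can invoke the pointwise bound $\psi_\ell(z)\lesssim\psi(z)$, valid for $|z|\ge 1$ and every $\ell\ge 0$ (because then $4^\ell|z|^2\ge|z|^2\ge 1$ forces $\psi_\ell(z)\le 2^{-\ell}|z|^{-(n+1)}\le|z|^{-(n+1)}\lesssim\psi(z)$); this reduces that piece to $(\psi*\psi)(w)$, and it remains to quote the standard single-scale estimate $\int_{{\mathbb R}^n}(1+|u|)^{-(n+1)}(1+|w-u|)^{-(n+1)}\,du\lesssim(1+|w|)^{-(n+1)}$, which itself follows from the dichotomy $|u|\lesseqgtr|w|/2$ and $n+1>n$.

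The one genuinely delicate point is the interplay of the scales $2^j$ and $2^k$: after normalizing $\min(j,k)$ to $0$ one convolves the slowly decaying ``master'' kernel $\psi$ with a concentrating approximate identity $\psi_\ell$, and $\psi_\ell$ fails to be pointwise dominated by $\psi$ only in a small neighbourhood of the origin of its argument, where instead one controls its $L^1$ mass. The observation that keeps the argument clean is that in the case that matters ($|w|>2$) the region $|w-u|>|w|/2$ automatically keeps $|w-u|>1$, so that bad neighbourhood never enters and the full estimate reduces to a one-scale convolution bound.
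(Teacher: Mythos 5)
Your proof is correct, but it takes a genuinely different route from the paper's. The paper recognizes that the kernel $2^{jn}(4^j|y|^2+1)^{-(n+1)/2}$ is (up to a constant) the Poisson kernel $P_{2^{-j}}$, whose Fourier transform is $e^{-2^{-j}|\xi|}$; the convolution on the left-hand side therefore has Fourier transform proportional to $e^{-(2^{-j}+2^{-k})|\xi|}$, so it equals (up to a constant, exactly, not merely up to two-sided bounds) the Poisson kernel $P_{2^{-j}+2^{-k}}$, and one finishes by noting $2^{-j}+2^{-k}\sim 2^{-\min(j,k)}$. Your argument instead normalizes by scaling so that one kernel is at unit scale and the other is at scale $2^{-\ell}$ with $\ell\ge 0$, and then proves $(\psi*\psi_\ell)(w)\sim\psi(w)$ by splitting $\mathbb{R}^n$ into regions according to which factor carries the $L^1$ mass and which carries the pointwise decay, reducing the worst case to the standard single-scale self-convolution bound. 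The paper's argument is shorter and delivers an identity rather than an equivalence, but it is special to the Poisson kernel; your argument is longer but entirely elementary and, more importantly, portable: it would prove the analogous two-scale convolution estimate for any pair of radially decreasing kernels with $(n+\varepsilon)$-power decay, with no reference to the Fourier transform. One cosmetic remark: in the upper bound for $|w|\le 2$ you should note explicitly that $\psi(w)\gtrsim 1$ there (since $1+|w|^2\le 5$), so that bounding the convolution by $\|\psi\|_1\lesssim 1$ really does give $(\psi*\psi_\ell)(w)\lesssim\psi(w)$; you gesture at this with ``$1\sim\psi(w)$'' but it is worth spelling out that the implicit constant depends only on $n$.
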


\begin{proof}
The proof is simple;
by the Fourier transform and the Plancherel theorem,
we have
\begin{align*}
\lefteqn{
\int_{{\mathbb R}^n}
\frac{2^{(j+k)n}\,dy}{(4^j|y|^2+1)^{\frac{n+1}{2}}(4^k|x-y|^2+1)^{\frac{n+1}{2}}}
}\\
&\simeq
\int_{{\mathbb R}^n}
\exp(-\xi \cdot x i-(2^{-j}+2^{-k})|\xi|)\,d\xi\\
&\simeq
\frac{(2^{-j}+2^{-k})^n}{((2^{-j}+2^{-k})^{-2}|x|^2+1)^{\frac{n+1}{2}}}
\sim
\frac{2^{\min(j,k)n}}{(4^{\min(j,k)}|x|^2+1)^{\frac{n+1}{2}}}.
\end{align*}
\end{proof}
We now prove Theorem \ref{thm:151210-7}.

\begin{proof}[Proof of Theorem \ref{thm:151210-7}]
Let $\theta \in {\mathcal S}_\infty$.
We seek to show
\[
\|\theta\|_{\dot{B}^s_{pq}}
=\left(
\sum_{j=-\infty}^\infty 
(2^{j s}\|{\mathcal F}^{-1}[\varphi_j \cdot {\mathcal F}\theta]\|_p)^q
\right)^{\frac{1}{q}}<\infty.
\]
Let $x \in {\mathbb R}^n$ be fixed.
It matters that
$\tilde{\theta} \equiv {\mathcal F}^{-1}(|\xi|^{-2L}{\mathcal F}\theta)$
belongs to ${\mathcal S}_\infty$
for all $L \in {\mathbb N}$
and that
$\Delta \tilde{\theta}=(-1)^L\theta$.
Then we have
\begin{align*}
{\mathcal F}^{-1}\varphi_j*\theta(x)
&=2^{j n}
\int_{{\mathbb R}^n}{\mathcal F}^{-1}\varphi(2^j y)\theta(x-y)\,dy\\
&=(-1)^L2^{j n}
\int_{{\mathbb R}^n}{\mathcal F}^{-1}\varphi(2^j y)\Delta^L\theta(x-y)\,dy\\
&=(-1)^L2^{j n+2j L}
\int_{{\mathbb R}^n}(\Delta^L{\mathcal F}^{-1}\varphi)(2^j y)\theta(x-y)\,dy.
\end{align*}
From Lemma \ref{lem:151210-2},
we obtain
\[
|{\mathcal F}^{-1}\varphi_j*\theta(x)|
\le \frac{2^{2j L}}{(|x|+1)^{n+1}}.
\]
Likewise
by letting
$\tilde{\varphi}(\xi) \equiv |\xi|^{-2L}\varphi(\xi)$,
we obtain
\[
|{\mathcal F}^{-1}\varphi_j*\theta(x)|
\lesssim \frac{2^{-2j L}}{(2^j|x|+1)^{n+1}}
\le \frac{2^{-(n+1)j-2j L}}{(|x|+1)^{n+1}}.
\]
Let $L \gg 1$.
As a result,
\[
\|\theta\|_{\dot{B}^s_{pq}}
\lesssim \left(
\sum_{j=-\infty}^\infty 
(2^{j s+|j|(n+1)-2|j|L})^q
\right)^{\frac{1}{q}}<\infty,
\]
as was to be shown.
\end{proof}

Next we aim to consider the role of the parameter $s$.
Let $f \in {\mathcal S}_\infty'$.
Then
$
f=\sum_{j=-\infty}^\infty 
{\mathcal F}^{-1}[\varphi_j \cdot {\mathcal F}f]
$
in ${\mathcal S}_\infty'$
since
$
\Phi=\sum_{j=-\infty}^\infty 
{\mathcal F}^{-1}[\varphi_j \cdot {\mathcal F}\Phi]
$
in ${\mathcal S}_\infty$
for all $\Phi \in {\mathcal S}_\infty$.
Observe that
\[
(-\Delta)^{\frac{\alpha}{2}}\Phi
\equiv
\sum_{j=-\infty}^\infty 
{\mathcal F}^{-1}[|\xi|^\alpha \cdot \varphi_j \cdot {\mathcal F}\Phi],
\]
where the convergence takes place in 
${\mathcal S}_\infty$.
Therefore,
we can define
$(-\Delta)^{\frac{\alpha}{2}}f$ by
\[
(-\Delta)^{\frac{\alpha}{2}}f
\equiv
\sum_{j=-\infty}^\infty 
{\mathcal F}^{-1}[|\xi|^\alpha \cdot \varphi_j \cdot {\mathcal F}f],
\]
where the convergence takes place in 
${\mathcal S}_\infty'$.
It is not so hard to show that
the definition
of 
$(-\Delta)^{\frac{\alpha}{2}}f$ 
does not depend on $\varphi$ satisfying
(\ref{eq:151224-2}).

\begin{theorem}\label{thm:151224-103}
Let $1 \le p,q \le \infty$ and $s,\alpha \in {\mathbb R}$.
Then
$(-\Delta)^{\frac{\alpha}{2}}:\dot{B}^s_{pq} \to \dot{B}^{s-\alpha}_{pq}$
is an isomorphism.
\end{theorem}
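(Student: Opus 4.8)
The plan is to exhibit $(-\Delta)^{\alpha/2}$ as a continuous linear bijection of $\dot B^s_{pq}$ onto $\dot B^{s-\alpha}_{pq}$ whose inverse is $(-\Delta)^{-\alpha/2}\colon\dot B^{s-\alpha}_{pq}\to\dot B^{s}_{pq}$. Linearity is immediate from the defining formula. The two composition identities $(-\Delta)^{-\alpha/2}(-\Delta)^{\alpha/2}f=f$ and $(-\Delta)^{\alpha/2}(-\Delta)^{-\alpha/2}f=f$ in ${\mathcal S}'_\infty$ follow from $|\xi|^{\alpha}|\xi|^{-\alpha}\equiv1$ on ${\mathbb R}^n\setminus\{0\}$ together with the reproducing formula $f=\sum_{j}{\mathcal F}^{-1}[\varphi_j\cdot{\mathcal F}f]$ in ${\mathcal S}'_\infty$ recalled just before the statement; the only point to be careful about is that all the interchanges of the Littlewood--Paley sum with the multipliers $|\xi|^{\pm\alpha}$ are legitimate when tested against elements of ${\mathcal S}_\infty$, which holds because their Fourier transforms vanish to infinite order at the origin, so that each such pairing involves only finitely many nonzero terms. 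Granting this, the whole theorem reduces to the single boundedness estimate
\[
\|(-\Delta)^{\alpha/2}f\|_{\dot B^{s-\alpha}_{pq}}\lesssim\|f\|_{\dot B^s_{pq}}\qquad(f\in\dot B^s_{pq}),
\]
applied once to the pair $(s,\alpha)$ and once to the pair $(s-\alpha,-\alpha)$; this also shows in passing that $(-\Delta)^{\alpha/2}f$, a priori only an element of ${\mathcal S}'_\infty\cong{\mathcal S}'/{\mathcal P}$, actually belongs to $\dot B^{s-\alpha}_{pq}$, so the map is well defined.

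To prove the displayed estimate I would first use that, with $\varphi_j=\psi(2^{-j}\cdot)-\psi(2^{-j+1}\cdot)=\varphi(2^{-j}\cdot)$, the function $\varphi_j$ is supported in the annulus $\{2^{j-1}\le|\xi|\le2^{j+1}\}$, so that $\mathrm{supp}\,\varphi_j\cap\mathrm{supp}\,\varphi_k=\emptyset$ as soon as $|j-k|$ exceeds an absolute constant $C_0$. Writing $g_k\equiv{\mathcal F}^{-1}[\varphi_k\cdot{\mathcal F}f]$, this gives
\[
{\mathcal F}^{-1}\bigl[\varphi_j\cdot{\mathcal F}\bigl((-\Delta)^{\alpha/2}f\bigr)\bigr]=\sum_{|k-j|\le C_0}{\mathcal F}^{-1}\bigl[\varphi_j\,|\xi|^{\alpha}\,{\mathcal F}g_k\bigr].
\]
The analytic heart is the uniform multiplier bound $\bigl\|{\mathcal F}^{-1}[\varphi_j\,|\xi|^{\alpha}\,{\mathcal F}h]\bigr\|_p\lesssim 2^{j\alpha}\|h\|_p$, valid for every $h$ and every $j\in{\mathbb Z}$ with constant independent of $j$. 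This follows by scaling: $\varphi_j(\xi)|\xi|^{\alpha}=2^{j\alpha}\mu(2^{-j}\xi)$ with $\mu\equiv\varphi\cdot|\cdot|^{\alpha}\in C^\infty_{\rm c}({\mathbb R}^n\setminus\{0\})$, hence ${\mathcal F}^{-1}[\varphi_j|\xi|^{\alpha}{\mathcal F}h]=2^{j\alpha}\bigl(2^{jn}{\mathcal F}^{-1}\mu(2^j\cdot)\bigr)*h$, and Young's inequality bounds its $L^p$ norm by $2^{j\alpha}\|{\mathcal F}^{-1}\mu\|_1\|h\|_p$, with $\|{\mathcal F}^{-1}\mu\|_1<\infty$ since ${\mathcal F}^{-1}\mu\in{\mathcal S}$.

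Combining the two displays,
\[
2^{(s-\alpha)j}\bigl\|{\mathcal F}^{-1}\bigl[\varphi_j\cdot{\mathcal F}\bigl((-\Delta)^{\alpha/2}f\bigr)\bigr]\bigr\|_p\lesssim 2^{(s-\alpha)j}\sum_{|k-j|\le C_0}2^{j\alpha}\|g_k\|_p\sim\sum_{|k-j|\le C_0}2^{sk}\|g_k\|_p,
\]
and taking the $\ell^q$-norm in $j$ of a sum of at most $2C_0+1$ shifted copies of the sequence $(2^{sk}\|g_k\|_p)_k$, the triangle inequality and shift invariance of $\ell^q$ yield
\[
\|(-\Delta)^{\alpha/2}f\|_{\dot B^{s-\alpha}_{pq}}\lesssim\Bigl(\sum_{k}(2^{sk}\|g_k\|_p)^q\Bigr)^{1/q}=\|f\|_{\dot B^s_{pq}},
\]
where the quasi-norm computed from this particular $\varphi_j$ is equivalent to the defining one by Remark \ref{rem:151224-2}. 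Applying the same chain with $(s,\alpha)$ replaced by $(s-\alpha,-\alpha)$ gives continuity of $(-\Delta)^{-\alpha/2}\colon\dot B^{s-\alpha}_{pq}\to\dot B^s_{pq}$, and together with the composition identities this finishes the proof. I expect the only genuine obstacle to be bookkeeping: making precise that every manipulation of infinite sums and of the operators $(-\Delta)^{\pm\alpha/2}$ takes place in the quotient ${\mathcal S}'/{\mathcal P}\cong{\mathcal S}'_\infty$ rather than among honest tempered distributions; the estimates themselves are entirely routine Littlewood--Paley calculus.
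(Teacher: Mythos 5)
Your proof is correct and follows essentially the same route as the paper: reduce the two-sided isomorphism claim to the single boundedness estimate $\|(-\Delta)^{\alpha/2}f\|_{\dot B^{s-\alpha}_{pq}}\lesssim\|f\|_{\dot B^s_{pq}}$, then derive it from Young's inequality combined with the dilation scaling that makes the $L^1$-norm of the relevant convolution kernel $\simeq 2^{j\alpha}$. The only cosmetic difference is that the paper inserts the buffer $\varphi_{j-1}+\varphi_j+\varphi_{j+1}$ directly into the symbol $|\xi|^\alpha$ and gets one convolution per dyadic block, whereas you expand $(-\Delta)^{\alpha/2}f$ into the finitely many overlapping terms $g_k$ with $|k-j|\le C_0$ and bound each; the underlying estimate is the same.
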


\begin{proof}
We have only to show that
$(-\Delta)^{\frac{\alpha}{2}}:\dot{B}^s_{pq} \to \dot{B}^{s-\alpha}_{pq}$
is continuous.
Indeed, once this is proved,
then we have
$(-\Delta)^{-\frac{\alpha}{2}}:\dot{B}^{s-\alpha}_{pq} \to \dot{B}^s_{pq}$
is continuous,
which is the inverse of
$(-\Delta)^{\frac{\alpha}{2}}$.

Let $f \in \dot{B}^s_{pq}$.
Then we have
\begin{align*}
\lefteqn{
\|(-\Delta)^{\frac{\alpha}{2}}f\|_{\dot{B}^{s-\alpha}_{pq}}
}\\
&=
\left(
\sum_{j=-\infty}^\infty 
(2^{j(s-\alpha)}\|
{\mathcal F}^{-1}[|\xi|^\alpha\cdot\varphi_j \cdot {\mathcal F}f]\|_p)^q
\right)^{\frac{1}{q}}\\
&=
\left(
\sum_{j=-\infty}^\infty 
(2^{j(s-\alpha)}\|{\mathcal F}^{-1}[(\varphi_{j-1}+\varphi_j+\varphi_{j+1})
\cdot|\xi|^\alpha \cdot \varphi_j \cdot {\mathcal F}f]\|_p)^q
\right)^{\frac{1}{q}}\\
&\simeq
\left(
\sum_{j=-\infty}^\infty 
(2^{j(s-\alpha)}\|{\mathcal F}^{-1}[(\varphi_{j-1}+\varphi_j+\varphi_{j+1})
\cdot|\xi|^\alpha]*{\mathcal F}^{-1}[\varphi_j \cdot {\mathcal F}f]\|_p)^q
\right)^{\frac{1}{q}}.
\end{align*}
If we use the Young inequality, then we have
\begin{align*}
\lefteqn{
\|(-\Delta)^{\frac{\alpha}{2}}f\|_{\dot{B}^{s-\alpha}_{pq}}
}\\
&\le
\left(
\sum_{j=-\infty}^\infty 
(2^{j(s-\alpha)}\|{\mathcal F}^{-1}[(\varphi_{j-1}+\varphi_j+\varphi_{j+1})
\cdot|\xi|^\alpha]\|_1
\|{\mathcal F}^{-1}[\varphi_j \cdot {\mathcal F}f]\|_p)^q
\right)^{\frac{1}{q}}\\
&\simeq
\left(
\sum_{j=-\infty}^\infty 
(2^{j s}\|{\mathcal F}^{-1}[\varphi_j \cdot {\mathcal F}f]\|_p)^q
\right)^{\frac{1}{q}}\\
&=
\|f\|_{\dot{B}^s_{pq}},
\end{align*}
as was to be shown.
\end{proof}

\begin{theorem}\label{thm:151224-101}
Let $1 \le p,q,r \le \infty$ and $s \in {\mathbb R}$.
\begin{enumerate}
\item
If $r \ge q$, then
$\dot{B}^s_{pq} \hookrightarrow \dot{B}^{s}_{pr}$.
\item
In the sense of continuous embedding
\begin{equation}\label{eq:160119-1}
\dot{B}^s_{pq} \hookrightarrow \dot{B}^{s-n/p}_{\infty q}.
\end{equation}
\end{enumerate}
\end{theorem}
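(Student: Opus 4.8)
The plan is to prove the two embeddings independently; each one reduces, after peeling off the Littlewood--Paley blocks $\mathcal{F}^{-1}[\varphi_j\cdot\mathcal{F}f]$, to a single elementary principle. Throughout I would use the one fixed system $\{\varphi_j\}$ from $(\ref{eq:151224-22})$ to compute both the source norm and the target norm; by Theorem $\ref{thm:160118-3}$ and Remark $\ref{rem:151224-2}$ each of these quasi-norms is independent, up to equivalence, of the admissible generator, so this costs nothing. I would also recall at the outset that these are assertions about $f\in{\mathcal S}'/{\mathcal P}$, so ``$f\in\dot B^s_{pq}$'' means precisely that the relevant series is finite; no further justification of the ambient space is needed here.

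For $(1)$ the content is the monotonicity of sequence spaces. I would put $a_j\equiv2^{js}\|\mathcal{F}^{-1}[\varphi_j\cdot\mathcal{F}f]\|_p$ and invoke the elementary bound $\|(a_j)_j\|_{\ell^r}\le\|(a_j)_j\|_{\ell^q}$, valid for $1\le q\le r\le\infty$: after normalizing $\|(a_j)_j\|_{\ell^q}=1$ one has $|a_j|\le1$ for every $j$, hence $|a_j|^r\le|a_j|^q$ and $\sum_j|a_j|^r\le1$ (the case $r=\infty$ being immediate), and homogeneity finishes it. This gives $\|f\|_{\dot B^s_{pr}}\le\|f\|_{\dot B^s_{pq}}$, i.e. $\dot B^s_{pq}\hookrightarrow\dot B^s_{pr}$.

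For $(2)$ the content is a Bernstein inequality for the frequency-localized pieces, obtained exactly as in the proof of Theorem $\ref{thm:151224-103}$. I would pick $\tilde\varphi\in C^\infty_{\rm c}({\mathbb R}^n\setminus\{0\})$ with $\tilde\varphi\equiv1$ on ${\rm supp}\,\varphi$ and set $\tilde\varphi_j\equiv\tilde\varphi(2^{-j}\cdot)$, so that $\varphi_j=\varphi_j\tilde\varphi_j$ and therefore
\[
\mathcal{F}^{-1}[\varphi_j\cdot\mathcal{F}f]\simeq\mathcal{F}^{-1}\tilde\varphi_j*\mathcal{F}^{-1}[\varphi_j\cdot\mathcal{F}f].
\]
Writing $p'$ for the conjugate exponent, $1/p+1/p'=1$, Young's inequality with $1=1/p'+1/p$ together with the dyadic scaling identity $\|\mathcal{F}^{-1}\tilde\varphi_j\|_{p'}=2^{jn/p}\|\mathcal{F}^{-1}\tilde\varphi\|_{p'}$ yields
\[
\|\mathcal{F}^{-1}[\varphi_j\cdot\mathcal{F}f]\|_\infty\lesssim 2^{jn/p}\|\mathcal{F}^{-1}[\varphi_j\cdot\mathcal{F}f]\|_p
\]
with a constant uniform in $j\in{\mathbb Z}$ and in $f$. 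Multiplying by $2^{j(s-n/p)}$ and taking the $\ell^q$-norm over $j$ gives $\|f\|_{\dot B^{s-n/p}_{\infty q}}\lesssim\|f\|_{\dot B^s_{pq}}$, which is $(\ref{eq:160119-1})$.

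The only step demanding attention, and the main (mild) obstacle, is the uniformity bookkeeping in this Bernstein estimate: one must check that the scaling factor really is $2^{jn/p}$ times a $j$-independent constant (this also covers the endpoint $p=1$, $p'=\infty$), and that the product-to-convolution identity for $\mathcal{F}^{-1}$ is applied with its harmless $(2\pi)^{\pm n/2}$ absorbed into $\lesssim$. Everything else is the $\ell^q$ monotonicity above and a single dyadic change of variables.
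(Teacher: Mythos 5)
Your argument is correct and follows essentially the same route as the paper: part (1) is the monotonicity $\ell^q\hookrightarrow\ell^r$ applied to the coefficient sequence, and part (2) is the Bernstein-type bound $\|\mathcal{F}^{-1}[\varphi_j\cdot\mathcal{F}f]\|_\infty\lesssim 2^{jn/p}\|\mathcal{F}^{-1}[\varphi_j\cdot\mathcal{F}f]\|_p$ obtained by inserting a fatter cutoff on ${\rm supp}\,\varphi_j$, converting the product to a convolution, and applying H\"older/Young together with the dyadic $L^{p'}$ scaling. The paper uses $\varphi_{j-1}+\varphi_j+\varphi_{j+1}$ where you use a freshly chosen $\tilde\varphi\equiv1$ on ${\rm supp}\,\varphi$; this is the same device, and the rest of the computation is identical.
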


\begin{proof}
\
\begin{enumerate}
\item
This is clear because
$\ell^q({\mathbb Z}) \hookrightarrow \ell^r({\mathbb Z})$.
\item
Let $f \in \dot{B}^s_{pq}$.
Then we have
\begin{align*}
\lefteqn{
\|f\|_{\dot{B}^{s-n/p}_{\infty q}}
}\\
&=
\left(
\sum_{j=-\infty}^\infty 
(2^{j(s-n/p)}
\|{\mathcal F}^{-1}[\varphi_j \cdot {\mathcal F}f]\|_\infty)^q
\right)^{\frac{1}{q}}\\
&=
\left(
\sum_{j=-\infty}^\infty 
(2^{j(s-n/p)}
\|{\mathcal F}^{-1}[
(\varphi_{j-1}+\varphi_j+\varphi_{j+1}) \cdot
\varphi_j \cdot {\mathcal F}f]\|_\infty)^q
\right)^{\frac{1}{q}}\\
&\simeq
\left(
\sum_{j=-\infty}^\infty 
(2^{j(s-n/p)}
\|{\mathcal F}^{-1}[
(\varphi_{j-1}+\varphi_j+\varphi_{j+1})]*{\mathcal F}^{-1}[
\varphi_j \cdot {\mathcal F}f]\|_\infty)^q
\right)^{\frac{1}{q}}.
\end{align*}
By the H\"{o}lder inequality, we have
\begin{align*}
\lefteqn{
\|f\|_{\dot{B}^{s-n/p}_{\infty q}}
}\\
&\lesssim
\left(
\sum_{j=-\infty}^\infty 
(2^{j(s-n/p)}
\|{\mathcal F}^{-1}[
(\varphi_{j-1}+\varphi_j+\varphi_{j+1})]\|_{p'}
\|{\mathcal F}^{-1}[
\varphi_j \cdot {\mathcal F}f]\|_p)^q
\right)^{\frac{1}{q}}\\
&\simeq
\left(
\sum_{j=-\infty}^\infty 
(2^{j s}\|{\mathcal F}^{-1}[\varphi_j \cdot {\mathcal F}f]\|_p)^q
\right)^{\frac{1}{q}}\\
&=
\|f\|_{\dot{B}^s_{pq}}.
\end{align*}
Thus we have the desired result.
\end{enumerate}
\end{proof}

\begin{theorem}\label{thm:160118-2}
Let $1 \le p,q \le \infty$ and $s \in {\mathbb R}$.
\begin{enumerate}
\item
$\dot{B}^s_{pq} \hookrightarrow {\mathcal S}_\infty'$.
\item
$\dot{B}^s_{pq}$ is complete.
\end{enumerate}
\end{theorem}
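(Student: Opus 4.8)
The plan is to deduce both assertions from the Littlewood--Paley description of $\dot{B}^s_{pq}$ together with the duality pairing $\mathcal{S}_\infty\times\mathcal{S}_\infty'\to\mathbb{C}$: statement (1) reduces to H\"older's inequality combined with the continuous embedding $\mathcal{S}_\infty\hookrightarrow\dot{B}^{-s}_{p'q'}$ supplied by Theorem \ref{thm:151210-7}, and statement (2) reduces to the completeness of $L^p$ applied frequency block by frequency block. Throughout, $p',q'$ denote the conjugate exponents and $\varphi_j$ is as fixed in the section, with $\Theta_j\equiv\varphi_{j-1}+\varphi_j+\varphi_{j+1}$, so that $\varphi_j\cdot\Theta_j=\varphi_j$.

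For (1), fix $f\in\dot{B}^s_{pq}$ (viewed in $\mathcal{S}'/\mathcal{P}\cong\mathcal{S}_\infty'$) and $\varphi\in\mathcal{S}_\infty$. Using the identity $f=\sum_j{\mathcal F}^{-1}[\varphi_j\cdot{\mathcal F}f]$ in $\mathcal{S}_\infty'$ recalled above and the support relation for $\Theta_j$, one writes $\langle f,\varphi\rangle=\sum_j\langle{\mathcal F}^{-1}[\varphi_j\cdot{\mathcal F}f],{\mathcal F}^{-1}[\Theta_j\cdot{\mathcal F}\varphi]\rangle$, each term being the integral of an $L^p$ function against an $L^{p'}$ function. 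H\"older's inequality in $x$, then H\"older's inequality in $j$ with exponents $q,q'$, splitting the weights as $2^{js}\cdot 2^{-js}$, gives
\[
|\langle f,\varphi\rangle|
\le
\|f\|_{\dot{B}^s_{pq}}
\left(\sum_{j=-\infty}^\infty 2^{-jsq'}\|{\mathcal F}^{-1}[\Theta_j\cdot{\mathcal F}\varphi]\|_{p'}^{q'}\right)^{1/q'}
\lesssim
\|f\|_{\dot{B}^s_{pq}}\,\|\varphi\|_{\dot{B}^{-s}_{p'q'}},
\]
where the last step is the triangle inequality and an index shift exactly as in the proof of Theorem \ref{thm:160118-3} (with the obvious modification when $q=1$). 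Since $\mathcal{S}_\infty\hookrightarrow\dot{B}^{-s}_{p'q'}$ is continuous by Theorem \ref{thm:151210-7}, we get $\|\varphi\|_{\dot{B}^{-s}_{p'q'}}\lesssim p_N(\varphi)$ for some $N$, which proves the continuity of $\dot{B}^s_{pq}\hookrightarrow\mathcal{S}_\infty'$; note the same estimate shows the series defining $\langle f,\varphi\rangle$ converges absolutely, so the above manipulation is legitimate.

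For (2), let $\{f_k\}$ be Cauchy in $\dot{B}^s_{pq}$. By (1) it is Cauchy in $\mathcal{S}_\infty'$, hence (using Banach--Steinhaus, $\mathcal{S}_\infty$ being a closed subspace of the Fr\'echet space $\mathcal{S}$) it converges in $\mathcal{S}_\infty'$ to some $f\in\mathcal{S}_\infty'\cong\mathcal{S}'/\mathcal{P}$. For each fixed $j$, since $2^{js}\|{\mathcal F}^{-1}[\varphi_j\cdot{\mathcal F}(f_k-f_l)]\|_p\le\|f_k-f_l\|_{\dot{B}^s_{pq}}$, the sequence ${\mathcal F}^{-1}[\varphi_j\cdot{\mathcal F}f_k]$ is Cauchy in $L^p$, hence converges in $L^p$ to some $g_j$; on the other hand ${\mathcal F}^{-1}[\varphi_j\cdot{\mathcal F}f_k](x)=\frac{1}{\sqrt{(2\pi)^n}}\langle f_k,{\mathcal F}^{-1}\varphi_j(x-\cdot)\rangle$ with ${\mathcal F}^{-1}\varphi_j(x-\cdot)\in\mathcal{S}_\infty$, so it also converges pointwise to ${\mathcal F}^{-1}[\varphi_j\cdot{\mathcal F}f](x)$; therefore $g_j={\mathcal F}^{-1}[\varphi_j\cdot{\mathcal F}f]$ and ${\mathcal F}^{-1}[\varphi_j\cdot{\mathcal F}f_k]\to{\mathcal F}^{-1}[\varphi_j\cdot{\mathcal F}f]$ in $L^p$. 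Given $\varepsilon>0$, pick $K$ with $\|f_k-f_l\|_{\dot{B}^s_{pq}}<\varepsilon$ for $k,l\ge K$; for each $l\ge K$ and each finite index set $J$, letting $k\to\infty$ in the finite $\ell^q$-sum (respectively supremum if $q=\infty$) over $j\in J$ yields $\left(\sum_{j\in J}(2^{js}\|{\mathcal F}^{-1}[\varphi_j\cdot{\mathcal F}(f-f_l)]\|_p)^q\right)^{1/q}\le\varepsilon$, and taking the supremum over $J$ gives $\|f-f_l\|_{\dot{B}^s_{pq}}\le\varepsilon$. Hence $f-f_l\in\dot{B}^s_{pq}$, so $f=(f-f_l)+f_l\in\dot{B}^s_{pq}$ and $f_l\to f$ in $\dot{B}^s_{pq}$, which is completeness.

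The main obstacle is organizational rather than technical: one must consistently treat elements of $\dot{B}^s_{pq}$ as living in $\mathcal{S}'/\mathcal{P}\cong\mathcal{S}_\infty'$ (not in $\mathcal{S}'$), so that the relevant pairing is with $\mathcal{S}_\infty$, and in (2) one must first manufacture the candidate limit in $\mathcal{S}_\infty'$ before upgrading convergence to the Besov quasi-norm. Both points depend on Theorem \ref{thm:151209-1} and on the decomposition $f=\sum_j{\mathcal F}^{-1}[\varphi_j\cdot{\mathcal F}f]$ in $\mathcal{S}_\infty'$; the remainder is H\"older's inequality, completeness of $L^p$, and the Fatou-type lower semicontinuity of $\|\cdot\|_{\dot{B}^s_{pq}}$.
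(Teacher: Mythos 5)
Your proof is correct, but it follows a genuinely different route from the paper's, above all in part (1). The paper never pairs $f$ against test functions directly: it first establishes the chain $\dot{B}^s_{pq}\hookrightarrow\dot{B}^{s-n/p}_{\infty q}\hookrightarrow\dot{B}^{s-n/p}_{\infty\infty}\sim\dot{\mathcal C}^{s-n/p}\hookrightarrow{\mathcal S}_\infty'$ for $s>n/p$, using the realization of $\dot{B}^s_{\infty\infty}$ as a H\"older--Zygmund space (Theorem \ref{thm:151224-102}) and the embeddings of Theorem \ref{thm:151224-101}, and then removes the restriction $s>n/p$ with the lift operator $(-\Delta)^{\alpha/2}$ of Theorem \ref{thm:151224-103}. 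For completeness it again lifts to the case $s>n/p$ and manufactures the candidate limit from the completeness of $\dot{\mathcal C}^{s-n/p}$, finishing with the same pointwise-convergence-plus-Fatou step you use. Your duality argument, $|\langle f,\varphi\rangle|\lesssim\|f\|_{\dot{B}^s_{pq}}\|\varphi\|_{\dot{B}^{-s}_{p'q'}}\lesssim\|f\|_{\dot{B}^s_{pq}}\,p_N(\varphi)$, is more direct: it treats all $s$ uniformly, avoids both the lift operator and the H\"older--Zygmund machinery, and yields as a by-product the pairing estimate that is the first step toward computing the dual of $\dot{B}^s_{pq}$. The price is that you need the \emph{quantitative} form of Theorem \ref{thm:151210-7} (that $\|\varphi\|_{\dot{B}^{-s}_{p'q'}}$ is controlled by some $p_N(\varphi)$, not merely finite --- this does follow from its proof, but you should say so), and in (2) you need the weak-$*$ sequential completeness of ${\mathcal S}_\infty'$, which your appeal to Banach--Steinhaus for the Fr\'echet space ${\mathcal S}_\infty$ correctly supplies; the paper sidesteps this functional-analytic point by producing the limit as a concrete continuous function. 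Both arguments are sound; yours is shorter and more self-contained, while the paper's exhibits the limit more explicitly via the realization $G+H$.
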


\begin{proof}\
\begin{enumerate}
\item
According to Theorems
\ref{thm:151224-102}
and
\ref{thm:151224-101},
\begin{equation}\label{eq:151224-104}
\dot{B}^s_{pq} 
\hookrightarrow
\dot{B}^{s-n/p}_{\infty q}
\hookrightarrow
\dot{B}^{s-n/p}_{\infty\infty}
\sim
\dot{\mathcal C}^{s-n/p}
\hookrightarrow {\mathcal S}_\infty'
\end{equation}
as long as $s>n/p$.
Thus,
when $s>n/p$,
$\dot{B}^s_{pq} \hookrightarrow {\mathcal S}_\infty'$.
According to Theorem \ref{thm:151224-103},
we still have
$\dot{B}^s_{pq} \hookrightarrow {\mathcal S}_\infty'$
when $s \le n/p$.
\item
According to Theorem \ref{thm:151224-103},
we may assume $s>n/p$.
Let $\{f_j\}_{j=1}^\infty$ be a Cauchy sequence
in
$\dot{B}^s_{pq}$.
Then, according to (\ref{eq:151224-104})
we have
$\dot{B}^s_{pq}\hookrightarrow\dot{\mathcal C}^{s-n/p}$.
Thus $\{f_j\}_{j=1}^\infty$ is a Cauchy sequence
in $\dot{\mathcal C}^{s-n/p}$.
Since $\dot{\mathcal C}^{s-n/p}$ is complete,
$\{f_j\}_{j=1}^\infty$ is convergent 
to $f \in \dot{\mathcal C}^{s-n/p}$.
Since
$\dot{\mathcal C}^{s-n/p} \hookrightarrow {\mathcal S}_\infty'$
and
${\mathcal F}^{-1}[\varphi_j \cdot {\mathcal F}f_l](x)
\simeq
\langle {\mathcal F}^{-1}\varphi_j(x-\cdot),f_l \rangle$,
we have
\[
\lim_{l \to \infty}
{\mathcal F}^{-1}[\varphi_j \cdot {\mathcal F}f_l](x)
=
{\mathcal F}^{-1}[\varphi_j \cdot {\mathcal F}f](x).
\]
By the Fatou theorem we have
\begin{align*}
\|f-f_k\|_{\dot{B}^s_{pq}}
&=
\left(
\sum_{j=-\infty}^\infty 
(2^{j s}\|{\mathcal F}^{-1}[\varphi_j \cdot {\mathcal F}[f-f_k]]\|_p)^q
\right)^{\frac{1}{q}}\\
&\le
\left(
\sum_{j=-\infty}^\infty \liminf_{l \to\infty}
(2^{j s}\|{\mathcal F}^{-1}[\varphi_j \cdot {\mathcal F}[f_l-f_k]]\|_p)^q
\right)^{\frac{1}{q}}\\
&\le\liminf_{l \to\infty}
\left(
\sum_{j=-\infty}^\infty 
(2^{j s}\|{\mathcal F}^{-1}[\varphi_j \cdot {\mathcal F}[f_l-f_k]]\|_p)^q
\right)^{\frac{1}{q}}.
\end{align*}
Thus, by letting $k \to \infty$,
we learn 
$f_k \to f$ in $\dot{B}^s_{p q}$.
\end{enumerate}
\end{proof}

\subsection{Realization}

When we consider the partial differential equation,
it is not confortable to work on the quotient space.
One of the reasons is that the quotient space does not give
us any information of the value of functions.
Therefore, at least we want to go back to
the subspace of ${\mathcal S}'$.
Although the evaluation mapping does not make
sense in ${\mathcal S}'$, we feel that
the situation becomes better in ${\mathcal S}'$
than in ${\mathcal S}_\infty' \simeq {\mathcal S}'/{\mathcal P}$.
Such a situation is available when $s$ is small enough.

\begin{theorem}\label{thm:151215-111}
Let $1 \le p,q \le \infty$.
Assume
\[
s<\frac{n}{p}
\]
or
\[
s=\frac{n}{p}, \quad q=1.
\]
Choose $\psi \in {\mathcal S}$
so that $\chi_{B(1)} \le \psi \le \chi_{B(2)}$.
Set $\varphi_j \equiv \psi(2^{-j}\cdot)-\psi(2^{-j+1}\cdot)$.
Then for all $f \in \dot{B}^s_{pq}$,
$\displaystyle
\sum_{j=-\infty}^0 
{\mathcal F}^{-1}[\varphi_j \cdot {\mathcal F}f]
$
is convergent in $L^\infty$
and
$\displaystyle
\sum_{j=1}^\infty 
{\mathcal F}^{-1}[\varphi_j \cdot {\mathcal F}f]
$
is convergent in ${\mathcal S}'$.
\end{theorem}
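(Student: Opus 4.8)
The plan is to treat the two series separately; only the low-frequency one uses the hypothesis on $s$. Throughout write $\varphi_j(D)f\equiv{\mathcal F}^{-1}[\varphi_j\cdot{\mathcal F}f]$; by the remarks following the definition of $\dot B^s_{pq}$ this is a smooth function, it lies in $L^p$ because $f\in\dot B^s_{pq}$, and $\bigl(2^{js}\|\varphi_j(D)f\|_p\bigr)_{j\in{\mathbb Z}}\in\ell^q$ with $\ell^q$-norm $\|f\|_{\dot B^s_{pq}}$.

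\emph{The low-frequency series.} The first step is the Bernstein-type bound
\[
\|\varphi_j(D)f\|_\infty\lesssim 2^{jn/p}\|\varphi_j(D)f\|_p\qquad(j\in{\mathbb Z}),
\]
with constant independent of $j$, proved exactly as in Theorem \ref{thm:151224-101}(2): writing $\varphi_j=(\varphi_{j-1}+\varphi_j+\varphi_{j+1})\varphi_j$ one gets $\varphi_j(D)f\simeq_n{\mathcal F}^{-1}[\varphi_{j-1}+\varphi_j+\varphi_{j+1}]*\varphi_j(D)f$, and Young's inequality $\|g*h\|_\infty\le\|g\|_{p'}\|h\|_p$ combined with the scaling identity
\[
\|{\mathcal F}^{-1}[\varphi_{j-1}+\varphi_j+\varphi_{j+1}]\|_{p'}=2^{jn/p}\|{\mathcal F}^{-1}\eta\|_{p'},
\qquad \eta\equiv\psi(2^{-1}\cdot)-\psi(2^{2}\cdot)\in C^\infty_{\rm c}({\mathbb R}^n\setminus\{0\}),
\]
gives the claim (note $\varphi_{j-1}+\varphi_j+\varphi_{j+1}=\eta(2^{-j}\cdot)$). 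In particular each $\varphi_j(D)f$ is a bounded continuous function. Hence
\[
\sum_{j=-\infty}^0\|\varphi_j(D)f\|_\infty
\lesssim\sum_{j=-\infty}^0 2^{j(n/p-s)}\bigl(2^{js}\|\varphi_j(D)f\|_p\bigr)
\le\Bigl\|\bigl(2^{j(n/p-s)}\bigr)_{j\le 0}\Bigr\|_{\ell^{q'}}\|f\|_{\dot B^s_{pq}}
\]
by Hölder's inequality for sequences. If $s<n/p$ then $n/p-s>0$, so the geometric sequence $(2^{j(n/p-s)})_{j\le0}$ lies in every $\ell^r$, $1\le r\le\infty$; if $s=n/p$ and $q=1$ then $q'=\infty$ and $(2^{j\cdot0})_{j\le0}\in\ell^\infty$. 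In both cases the right-hand side is finite, so $\sum_{j=-\infty}^0\varphi_j(D)f$ converges absolutely, hence uniformly, in $L^\infty$.

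\emph{The high-frequency series.} Here no restriction on $(p,q,s)$ is needed; we show that for every $\phi\in{\mathcal S}$ the scalar series $\sum_{j=1}^\infty\langle\varphi_j(D)f,\phi\rangle$ converges absolutely. Since $\varphi_{j-1}+\varphi_j+\varphi_{j+1}\equiv1$ on ${\rm supp}\,\varphi_j\subset\{2^{j-1}\le|\xi|\le2^{j+1}\}$, again $\varphi_j(D)f\simeq_n{\mathcal F}^{-1}[\varphi_{j-1}+\varphi_j+\varphi_{j+1}]*\varphi_j(D)f$, whence
\[
\langle\varphi_j(D)f,\phi\rangle\simeq_n\langle\varphi_j(D)f,\Psi_j*\phi\rangle,
\qquad
\Psi_j(x)\equiv{\mathcal F}^{-1}[\varphi_{j-1}+\varphi_j+\varphi_{j+1}](-x),
\]
where $\|\Psi_j\|_1$ is independent of $j$ and ${\mathcal F}\Psi_j$ is supported in $\{2^{j-2}\le|\xi|\le2^{j+2}\}$. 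As $\Psi_j*\phi$ retains only the frequencies of $\phi$ in that annulus, and ${\mathcal F}\phi\in{\mathcal S}$ is rapidly decreasing there, a routine computation gives $\|\Psi_j*\phi\|_{p'}\le C_{L,\phi}\,2^{-jL}$ for every $L\in{\mathbb N}$. Hölder's inequality then yields
\[
|\langle\varphi_j(D)f,\phi\rangle|\lesssim\|\varphi_j(D)f\|_p\,\|\Psi_j*\phi\|_{p'}\le C_{L,\phi}\,2^{-jL-js}\|f\|_{\dot B^s_{pq}},
\]
and choosing $L$ with $L+s>0$ we get $\sum_{j=1}^\infty|\langle\varphi_j(D)f,\phi\rangle|<\infty$. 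Thus the partial sums $\sum_{j=1}^M\varphi_j(D)f$ converge in the (weak-$*$) topology of ${\mathcal S}'$, and the limit is a tempered distribution because ${\mathcal S}$ is a Fr\'echet, hence barrelled, space. (Alternatively, rewrite $\langle\varphi_j(D)f,\phi\rangle=\langle f,{\mathcal F}[\varphi_j\cdot{\mathcal F}^{-1}\phi]\rangle$ and combine $\dot B^s_{pq}\hookrightarrow{\mathcal S}'_\infty$ from Theorem \ref{thm:160118-2} with the seminorm bound $|\langle f,\theta\rangle|\le N p_N(\theta)$ for $\theta\in{\mathcal S}_\infty$.)

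\emph{Main obstacle.} The substance is the low-frequency series: it is precisely the hypothesis $s<n/p$, or the endpoint $s=n/p$ with $q=1$, that makes the sum $\sum_{j\le0}2^{j(n/p-s)}\bigl(2^{js}\|\varphi_j(D)f\|_p\bigr)$ finite, and one must check that the Bernstein inequality above is uniform in $j\in{\mathbb Z}$ (it is, by scaling). For the high-frequency series the only care needed is the bookkeeping behind the rapid decay of $\|\Psi_j*\phi\|_{p'}$ in $j$; the rest is soft functional analysis.
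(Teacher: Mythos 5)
Your proof is correct, and its treatment of the high-frequency part takes a genuinely different (though closely related) route from the paper's. For the low-frequency part you and the paper use the same Bernstein/Nikolskii estimate $\|\varphi_j(D)f\|_\infty\lesssim 2^{jn/p}\|\varphi_j(D)f\|_p$; the paper merely says \lq\lq as before," whereas you make explicit the H\"older-in-$j$ step with the $\ell^{q'}$ weight $(2^{j(n/p-s)})_{j\le 0}$, which is exactly where the hypotheses $s<n/p$, or $s=n/p$ with $q=1$, enter. For the high-frequency part the paper applies the lift $(-\Delta)^{-L}$ to $\varphi_j(D)f$ to gain the factor $2^{-2jL}$ on the $j$-th annulus and shows absolute convergence of the lifted series in $L^\infty$, then goes back via continuity of $(-\Delta)^L$ on ${\mathcal S}'$; you instead leave $\varphi_j(D)f$ untouched and move the extra decay to the test function, replacing $\phi$ by $\Psi_j*\phi$ and exploiting the rapid decrease of ${\mathcal F}\phi$ on the annulus $\{|\xi|\sim 2^j\}$ to get $\|\Psi_j*\phi\|_{p'}\lesssim_{L,\phi}2^{-jL}$, after which H\"older gives absolute convergence of the pairings and Banach--Steinhaus on the Fr\'echet space ${\mathcal S}$ (i.e.\ sequential weak-$*$ completeness of ${\mathcal S}'$) produces a limit in ${\mathcal S}'$. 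The two mechanisms are dual ways of spending the same $2^{-jL}$ gain. Your route has the advantage of being entirely self-contained and avoiding any back-and-forth through the lift; the paper's route reuses the lift isomorphism (Theorem \ref{thm:151224-103}) already developed. One small thing you gloss over is the \lq\lq routine computation" that yields $\|\Psi_j*\phi\|_{p'}\lesssim 2^{-jL}$: this needs an integration-by-parts argument (or a weighted $L^1$ estimate on ${\mathcal F}^{-1}[\eta_j(-\cdot){\mathcal F}\phi]$ with weight $(1+|x|)^{n+1}$) rather than just a pointwise Fourier bound, since $\|\cdot\|_{p'}$ for $p'<\infty$ is not controlled by $\|{\mathcal F}(\cdot)\|_1$ alone; but the argument is indeed standard. (As a side remark: the paper's stated sufficient condition $2L>n/p$ should read $2L>n/p-s$; this is immaterial since $L$ may be taken arbitrarily large.)
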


\begin{proof}
We use
$
\|{\mathcal F}^{-1}[\varphi_j \cdot {\mathcal F}f]\|_\infty
\lesssim
2^{j n/p}\|{\mathcal F}^{-1}[\varphi_j \cdot {\mathcal F}f]\|_p
$
to show that 
$\displaystyle
\sum_{j=-\infty}^0 
{\mathcal F}^{-1}[\varphi_j \cdot {\mathcal F}f]
$
is convergent in $L^\infty$
as before.
The fact that
$\displaystyle
\sum_{j=1}^\infty 
{\mathcal F}^{-1}[\varphi_j \cdot {\mathcal F}f]
$
is convergent in ${\mathcal S}'$
is a general fact.
Or to show this,
we can use the lift operator to have
$\displaystyle
\sum_{j=1}^\infty 
(-\Delta)^{-L}{\mathcal F}^{-1}[\varphi_j \cdot {\mathcal F}f]
$
is convergent in ${\mathcal S}'$
for any $L \in {\mathbb N}$.
In fact, we can check that
$$\displaystyle
\sum_{j=1}^\infty 
\|(-\Delta)^{-L}{\mathcal F}^{-1}[\varphi_j \cdot {\mathcal F}f]\|_\infty
\lesssim
\sum_{j=1}^\infty 
2^{j(n/p-2L)}\|{\mathcal F}^{-1}[\varphi_j \cdot {\mathcal F}f]\|_p
<\infty
$$
as long as $2L>\dfrac{n}{p}$.
\end{proof}

\subsection{Refinement of the Fourier transform}
\label{s3.5}

We supplement and prove Theorem \ref{thm:151224-1}.
\begin{theorem}\label{thm:151224-109}
\
\begin{enumerate}
\item
The Fourier transform
${\mathcal F}$
maps $L^1$ into $\dot{B}^0_{\infty 1}$.
\item
$\dot{B}^0_{\infty 1} \hookrightarrow {\rm BC}$,
where $\dot{B}^0_{\infty 1}$ can be regarded
as the subset of ${\mathcal S}'$
by way of Theorem \ref{thm:151215-111}.
\end{enumerate}
\end{theorem}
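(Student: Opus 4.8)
The plan is to handle the two assertions separately, since they are of rather different nature: $(1)$ is a computation with the Fourier transform and the Hausdorff–Young / Riemann–Lebesgue circle of ideas localized dyadically, while $(2)$ is a soft embedding argument. For $(2)$ I would start from the observation that, by Theorem \ref{thm:151215-111} applied with $p=\infty$, $s=0$, $q=1$ (which is exactly the borderline case $s=n/p=0$, $q=1$ allowed there), every $f \in \dot{B}^0_{\infty 1}$ can be realized in ${\mathcal S}'$ as $\sum_{j=-\infty}^0 {\mathcal F}^{-1}[\varphi_j \cdot {\mathcal F}f]$ convergent in $L^\infty$ plus $\sum_{j=1}^\infty {\mathcal F}^{-1}[\varphi_j \cdot {\mathcal F}f]$. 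Each summand ${\mathcal F}^{-1}[\varphi_j \cdot {\mathcal F}f]$ is a bounded continuous function (it is band-limited and lies in $L^\infty$), and the series $\sum_{j \in {\mathbb Z}} \|{\mathcal F}^{-1}[\varphi_j \cdot {\mathcal F}f]\|_\infty = \|f\|_{\dot{B}^0_{\infty 1}}$ converges; hence the realization is a uniformly convergent series of bounded continuous functions and therefore itself lies in ${\rm BC}$, with $\|f\|_{{\rm BC}} \le \|f\|_{\dot{B}^0_{\infty 1}}$. This gives $\dot{B}^0_{\infty 1} \hookrightarrow {\rm BC}$.

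For $(1)$ I would fix $g \in L^1$ and estimate the dyadic pieces of ${\mathcal F}g$. Writing $\varphi_j = \psi(2^{-j}\cdot) - \psi(2^{-j+1}\cdot)$ as in Remark \ref{rem:151224-2}, I want to bound $\sum_{j=-\infty}^\infty \|{\mathcal F}^{-1}[\varphi_j \cdot {\mathcal F}({\mathcal F}g)]\|_\infty$. The key computation is that ${\mathcal F}^{-1}[\varphi_j \cdot {\mathcal F}({\mathcal F}g)]$ is, up to the usual constants and a reflection, $({\mathcal F}^{-1}\varphi_j) * g$ evaluated after a Fourier transform — more cleanly, ${\mathcal F}^{-1}[\varphi_j \cdot {\mathcal F}h]$ with $h = {\mathcal F}g$ equals $({\mathcal F}^{-1}\varphi_j)*{\mathcal F}g = {\mathcal F}[(\check\varphi_j)^{\vee}\cdot g]$ type expression, so its $L^\infty$ norm is controlled by $\|\varphi_j \cdot g(-\cdot)\|_{1}$-flavoured quantities; concretely $\|{\mathcal F}^{-1}[\varphi_j \cdot {\mathcal F}({\mathcal F}g)]\|_\infty \lesssim \|({\mathcal F}\varphi_j) \cdot g\|_1 \le \int_{{\rm supp}({\mathcal F}\varphi_j)} |g(x)|\,dx$. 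Since the supports of ${\mathcal F}\varphi_j$ are the dyadic annuli $\{2^{j-1} \le |x| \le 2^{j+1}\}$, which have bounded overlap (each point lies in at most a fixed finite number of them), summing over $j \in {\mathbb Z}$ gives $\sum_j \|{\mathcal F}^{-1}[\varphi_j \cdot {\mathcal F}({\mathcal F}g)]\|_\infty \lesssim \|g\|_1$, i.e. ${\mathcal F}g \in \dot{B}^0_{\infty 1}$ with norm control $\|{\mathcal F}g\|_{\dot{B}^0_{\infty 1}} \lesssim \|g\|_1$.

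The main obstacle I anticipate is bookkeeping in $(1)$: getting the Fourier-transform identities and the reflection/sign conventions exactly right so that ${\mathcal F}^{-1}[\varphi_j \cdot {\mathcal F}({\mathcal F}g)]$ really does reduce to multiplication of $g$ by (a dilate of) $\psi$, and then justifying the interchange of $\sum_j$ with the integral — this is where one uses the finite-overlap property of the annuli rather than a crude triangle inequality, which would only give something like a logarithmically divergent bound. A secondary, smaller point is checking that the two descriptions of $\dot{B}^0_{\infty 1}$ — as a subspace of ${\mathcal S}'/{\mathcal P}$ and, via Theorem \ref{thm:151215-111}, as a genuine subspace of ${\mathcal S}'$ — are compatible, so that the target of ${\mathcal F}$ in $(1)$ and the source in $(2)$ literally coincide; this is really just an appeal to the realization theorem but should be stated. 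Everything else is the kind of routine Young-inequality and dyadic-summation estimate already used repeatedly in this section.
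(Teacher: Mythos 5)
Your overall approach is essentially the same as the paper's: for $(1)$ you use ${\mathcal F}({\mathcal F}g)=g(-\cdot)$ followed by the trivial Hausdorff--Young bound $\|{\mathcal F}^{-1}h\|_\infty\le\|h\|_1$ on each dyadic piece, and then sum using the bounded overlap (equivalently, a partition-of-unity choice of $\varphi$) of the annuli; for $(2)$ you use the realization from Theorem \ref{thm:151215-111} and uniform convergence of the series of bounded continuous pieces. Both steps match the paper's proof.

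One notational confusion in your write-up of $(1)$ should be fixed, because as literally stated it is false. You write $\|{\mathcal F}^{-1}[\varphi_j\cdot{\mathcal F}({\mathcal F}g)]\|_\infty\lesssim\|({\mathcal F}\varphi_j)\cdot g\|_1$ and then invoke ``the supports of ${\mathcal F}\varphi_j$ are the dyadic annuli.'' But ${\mathcal F}\varphi_j$ is a Schwartz function with full support; it is $\varphi_j$ itself that is supported in the dyadic annulus $\{2^{j}\le|\xi|\le 2^{j+3}\}$. The correct chain, which you in fact gesture at in the preceding clause, is
\[
\|{\mathcal F}^{-1}[\varphi_j\cdot{\mathcal F}({\mathcal F}g)]\|_\infty
=\|{\mathcal F}^{-1}[\varphi_j\cdot g(-\cdot)]\|_\infty
\le\|\varphi_j\cdot g(-\cdot)\|_1
\le\int_{{\rm supp}(\varphi_j)}|g(-\xi)|\,d\xi,
\]
and summing over $j$ then gives $\lesssim\|g\|_1$ by bounded overlap of the supports of the $\varphi_j$ (or exactly $\|g\|_1$ if one chooses $\sum_j\varphi_j\equiv 1$ off the origin, as the paper does). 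Also, your worry about a ``logarithmically divergent bound'' from the triangle inequality is not warranted here: once the multiplier $\varphi_j$, rather than its inverse Fourier transform, is the compactly supported object, the bounded-overlap summation is immediate. The danger you were instinctively avoiding would arise if one tried Young's inequality $\|{\mathcal F}^{-1}\varphi_j\|_1\|{\mathcal F}g\|_\infty$, which is scale-invariantly constant in $j$ and hence not summable — so your decision not to go that way was the right one, just mislabelled.
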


\begin{proof}
We assume that $\varphi \in {\mathcal S}$
satisfies
\[
{\rm supp}(\varphi) \subset B(4) \setminus B(1), \quad
\sum_{j=-\infty}^\infty \varphi_j
=\chi_{{\mathbb R}^n \setminus \{0\}},
\]
where $\varphi_j(\xi) \equiv \varphi(2^{-j}\xi)$
for $j \in {\mathbb Z}$ and $\xi \in {\mathbb R}^n$.
\begin{enumerate}
\item
$\displaystyle
\|{\mathcal F}f\|_{\dot{B}^0_{\infty 1}}
=
\sum_{j=-\infty}^\infty
\|{\mathcal F}^{-1}[\varphi_j\cdot f(-\cdot)]\|_\infty
\le
\sum_{j=-\infty}^\infty
\|\varphi_j\cdot f(-\cdot)\|_1
=
\|f\|_1
$
where we used the original Hausdorff-Young theorem
to obtain the inequality.
\item
The assumption
$f \in \dot{B}^0_{\infty 1}$ is equivalent to
\[
\sum_{j=-\infty}^\infty 
\|{\mathcal F}^{-1}[\varphi_j \cdot {\mathcal F}f]\|_\infty
<\infty.
\]
Thus,
the series
\[
f=\sum_{j=-\infty}^\infty 
{\mathcal F}^{-1}[\varphi_j \cdot {\mathcal F}f]
\]
converges uniformly.
Since
${\mathcal F}^{-1}[\varphi_j \cdot {\mathcal F}f]
\in {\rm BC}$,
it follows that
$f \in {\rm BC}$.
\end{enumerate}
\end{proof}

From the remark below we see that
Theorem \ref{thm:151224-109} improves
the classical Hausdorff-Young theorem.
\begin{remark}
Remark that
${\rm BUC}$ is a closed subspace of $L^\infty$,
which is of course equipped with the $L^\infty$-norm.

This implies that
${\rm BUC} \ne \dot{B}^0_{\infty 1}$.
In fact, the above proof shows that
${\rm BUC} \supset \dot{B}^0_{\infty 1}$.
Assume that equality holds.
Then the function
\[
f_j(x)\equiv \max(1,\min(-1,j x_1)) \in {\rm BUC}
\]
forms a bounded set.
This implies that
$\{f_j\}_{j=1}^\infty \subset \dot{B}^0_{\infty 1}$
is a bounded set by virtue of the closed graph theorem.
Since
$f_j \to 2\chi_{\{x_1>0\}}-1$
in ${\mathcal S}'$,
$2\chi_{\{x_1>0\}}-1$ would be a member
in $\dot{B}^0_{\infty 1}$.
This is a contradiction to Theorem \ref{thm:151224-109}(2).
\end{remark}

\section{More about function spaces}

\subsection{The homogeneous Besov space $\dot{B}^s_{pq}$ for $0<p,q \le \infty$ and $s \in {\mathbb R}$}
\label{section:4.1}

We extend $\dot{B}^s_{pq}$
for $0<p,q \le \infty$ and $s \in {\mathbb R}$.

\begin{definition}
Let $0<p,q \le \infty$ and $s \in {\mathbb R}$.
Choose $\varphi \in {\mathcal S}$
so that
$\chi_{B(4) \setminus B(2)} \le \varphi
\le \chi_{B(8) \setminus B(1)}$.
Define
\[
\dot{B}^s_{pq}
\equiv
\left\{
f \in {\mathcal S}'/{\mathcal P}\,:\,
\|f\|_{\dot{B}^s_{pq}}
\equiv
\left(
\sum_{j=-\infty}^\infty 
(2^{j s}\|{\mathcal F}^{-1}[\varphi_j \cdot {\mathcal F}f]\|_p)^q
\right)^{\frac{1}{q}}<\infty
\right\}.
\]
The space
$\dot{B}^s_{pq}$ is the set of all
$f \in {\mathcal S}'/{\mathcal P}$
for which the quasi-norm
$\|f\|_{\dot{B}^s_{pq}}$
is finite.
\end{definition}

Let $0<\eta<\infty$.
We define the powered Hardy-Littlewood maximal operator
$M^{(\eta)}$ by
\begin{equation}
M^{(\eta)}f(x)
\equiv \sup_{R>0}
\left(
\frac{1}{|B(x,R)|}\int_{B(x,R)}|f(y)|^\eta\,d y
\right)^\frac{1}{\eta}.
\end{equation}
The powered Hardy-Littlewood maximal operator 
$M^{(\eta)}$ 
comes about
in the context of the Plancherel-Polya-Nikolskii inequality;
\[
\sup_{y \in {\mathbb R}^n}
\frac{|f(x-y)|}{(1+R y)^{n/\eta}}
\lesssim 
M^{(\eta)}(x)
\]
for all $R>0$ and $f \in {\mathcal S}'$ such that
${\rm supp}(f) \subset B(R)$.

The thrust of considering
the spaces $\dot{B}^s_{pq}$ with
$0<p,q \le \infty$ and $s \in {\mathbb R}$
is the pointwise product
$f \cdot g$,
which corresponds to the fact that
$L^p \cdot L^q=L^r$
for any $p,q,r>0$ with $\frac1r=\frac1p+\frac1q$.

\subsection{The homogeneous Triebel-Lizorkin space $\dot{F}^s_{pq}$ for $0<p<\infty$, $0<q \le \infty$ and $s \in {\mathbb R}$}
\label{section:4.2}

We let 
$0<p<\infty$,
$0<q \le \infty$
and $s \in {\mathbb R}$
instead of letting
$0<p,q \le \infty$
and $s \in {\mathbb R}$.

\begin{definition}
Let $0<p,q \le \infty$ and $s \in {\mathbb R}$.
Choose $\varphi \in {\mathcal S}$
so that
$\chi_{B(4) \setminus B(2)} \le \varphi
\le \chi_{B(8) \setminus B(1)}$.
Define
\[
\dot{F}^s_{pq}
\equiv
\left\{
f \in {\mathcal S}'/{\mathcal P}\,:\,
\|f\|_{\dot{F}^s_{pq}}
\equiv
\left\|
\left(
\sum_{j=-\infty}^\infty 
|2^{j s}{\mathcal F}^{-1}[\varphi_j \cdot {\mathcal F}f]|^q
\right)^{\frac{1}{q}}
\right\|_p<\infty
\right\}.
\]
The space
$\dot{F}^s_{pq}$ is the set of all
$f \in {\mathcal S}'/{\mathcal P}$
for which the quasi-norm
$\|f\|_{\dot{F}^s_{pq}}$
is finite.
\end{definition}
To handle the convolution,
we use the following theorem:
\begin{theorem}\label{thm:160118-1}
Let $0<p<\infty$, $0<q \le \infty$ and $s \in {\mathbb R}$.
If $0<\eta<\infty$ and
$\{f_j\}_{j=1}^\infty$ is a sequence of measurable functions,
then 
\[
\left\| \left(\sum_{j=1}^\infty Mf_j{}^q\right)^{\frac1q}
\, \right\|_p
\lesssim_{p,q,\eta}
\left\| \left(\sum_{j=1}^\infty |f_j|^q\right)^{\frac1q}
\, \right\|_p.
\]
\end{theorem}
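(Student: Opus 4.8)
The plan is to reduce the general ranges $0<p<\infty$, $0<q\le\infty$ to the classical Fefferman--Stein regime $1<p,q<\infty$ by the $\eta$-power trick, and then to establish that classical case by a combination of duality and interpolation.

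First the reduction, which is where the parameter $\eta$ enters. Fix $\eta$ with $0<\eta<\min(p,q)$, put $g_j\equiv|f_j|^\eta$, $\tilde p\equiv p/\eta\in(1,\infty)$ and $\tilde q\equiv q/\eta\in(1,\infty]$. Since $M^{(\eta)}f_j=(Mg_j)^{1/\eta}$ and the $L^p$- and $\ell^q$-quasi-norms are homogeneous (so $\||h|^{1/\eta}\|_p=\|h\|_{\tilde p}^{1/\eta}$, and similarly in the $\ell^q$-variable), a direct computation yields
\[
\left\|\Big(\sum_j(M^{(\eta)}f_j)^q\Big)^{1/q}\right\|_p=\left\|\Big(\sum_j(Mg_j)^{\tilde q}\Big)^{1/\tilde q}\right\|_{\tilde p}^{1/\eta},
\]
while the right-hand side of the assertion equals $\big\|(\sum_j|g_j|^{\tilde q})^{1/\tilde q}\big\|_{\tilde p}^{1/\eta}$. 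Hence it suffices to prove: for $1<p<\infty$ and $1<q\le\infty$, the sublinear map $T\colon(f_j)_j\mapsto(Mf_j)_j$ is bounded on $L^p(\ell^q)$.

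For this classical statement I would argue case by case. If $q=\infty$, then $Mf_j\le M\big(\sup_k|f_k|\big)$ for each $j$, hence $\sup_j Mf_j\le M(\sup_k|f_k|)$, and boundedness follows from the scalar Hardy--Littlewood maximal theorem. If $q=p$, applying the scalar $L^p$-bound to each $f_j$ and summing $p$-th powers gives it at once. For $1<q<p<\infty$ I would dualize: with $r\equiv(p/q)'\in(1,\infty)$ and any $0\le w\in L^r$ with $\|w\|_r=1$,
\[
\int_{{\mathbb R}^n}\Big(\sum_j(Mf_j)^q\Big)w\,dx=\sum_j\int_{{\mathbb R}^n}(Mf_j)^q\,w\,dx\lesssim\sum_j\int_{{\mathbb R}^n}|f_j|^q\,Mw\,dx\le\Big\|\sum_j|f_j|^q\Big\|_{p/q}\|Mw\|_r,
\]
and $\|Mw\|_r\lesssim1$ again by the scalar maximal theorem; taking the supremum over $w$ closes this case. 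The crucial middle step is the weighted single-function estimate $\int(Mf)^q w\,dx\lesssim_q\int|f|^q\,Mw\,dx$, valid for $q>1$. Finally, for $1<p<q<\infty$ I would interpolate the vector-valued spaces with $p$ fixed: $T$ is bounded on $L^p(\ell^p)$ and on $L^p(\ell^\infty)$ by the two cases just done, and (sublinear) complex interpolation of the family $\{L^p(\ell^q)\}$ fills in every $q$ with $p<q<\infty$.

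The main obstacle is precisely the weighted single-function inequality $\int_{{\mathbb R}^n}(Mf)^q w\,dx\lesssim_q\int_{{\mathbb R}^n}|f|^q Mw\,dx$ for $1<q<\infty$ and $0\le w$. I would obtain it by Marcinkiewicz interpolation, with the measure $w\,dx$ on the range side, between the trivial endpoint $\|Mf\|_{L^\infty(w\,dx)}\le\|f\|_{L^\infty}$ and the weak-type bound
\[
w\big(\{x:Mf(x)>\lambda\}\big)\lesssim\frac1\lambda\int_{{\mathbb R}^n}|f(x)|\,Mw(x)\,dx\qquad(\lambda>0),
\]
which itself comes from a Vitali covering of $\{Mf>\lambda\}$ by balls $B$ on which $\frac1{|B|}\int_B|f|>\lambda$, together with the pointwise control $w(B)\le|B|\inf_B Mw$. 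The remaining points are routine: the interchange of $\sum_j$ and $\int$ is legitimate since every term is nonnegative (monotone convergence); the scalar Hardy--Littlewood theorem, used throughout, follows from its weak $(1,1)$ inequality and Marcinkiewicz interpolation; and in the reduction step one should note that the endpoint $q=\infty$ (i.e. $\tilde q=\infty$) lands in the already-treated case, and that $\|\cdot\|_p$ and the $\ell^q$-sum are to be read as quasi-norms when $p<1$ or $q<1$.
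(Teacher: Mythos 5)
The paper does not prove this theorem at all; it simply cites Fefferman--Stein \cite{FeSt71}. So any correct self-contained argument is genuinely new content here. Your overall structure is sound: you correctly read the (slightly misstated) theorem as concerning $M^{(\eta)}$ with the implicit constraint $\eta<\min(p,q)$, the $\eta$-power reduction to $1<\tilde p<\infty$, $1<\tilde q\le\infty$ is exactly right, the $\tilde q=\infty$ and $\tilde q=\tilde p$ cases are fine, and the duality argument for $\tilde q<\tilde p$ via the weighted estimate $\int (Mf)^q w\lesssim \int |f|^q\,Mw$ (proved by Marcinkiewicz between the trivial $L^\infty$ bound and the weak type $w(\{Mf>\lambda\})\lesssim \lambda^{-1}\int |f|\,Mw$) is the standard Fefferman--Stein route and is correct.

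The one step I would push back on is the case $1<\tilde p<\tilde q<\infty$. You invoke \lq\lq sublinear complex interpolation" of $L^p(\ell^p)$ and $L^p(\ell^\infty)$ with $p$ fixed. Calder\'on's identity $[L^p(\ell^{q_0}),L^p(\ell^{q_1})]_\theta=L^p(\ell^q)$ is a theorem about \emph{linear} operators, and there is no off-the-shelf complex interpolation theorem for sublinear maps; moreover the real-method alternative $(L^p(\ell^p),L^p(\ell^\infty))_{\theta,p}$ does \emph{not} produce $L^p(\ell^q)$ (it produces a Lorentz sequence space). To make your approach rigorous you would have to linearize: discretize the radii so $M=\sup_k A_k$ with $A_k$ linear averaging operators, pass to $T_{(k_j)}:(f_j)\mapsto (A_{k_j(\cdot)}f_j)$ for measurable selections $(k_j)$, verify the two endpoint $L^p(\ell^p)$ and $L^p(\ell^\infty)$ bounds hold \emph{uniformly} in the selection, apply complex interpolation to each $T_{(k_j)}$, and then take a supremum. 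This can be carried out, but as written the step is a genuine gap. The classical proof in \cite{FeSt71} avoids this entirely: it proves the vector-valued weak $(1,1)$ estimate
\[
\bigl|\{\,x:\|(Mf_j(x))_j\|_{\ell^q}>\lambda\,\}\bigr|
\lesssim \frac1\lambda\int_{\mathbb R^n}\|(f_j(x))_j\|_{\ell^q}\,dx
\]
by a Calder\'on--Zygmund decomposition of the scalar function $\|(f_j)_j\|_{\ell^q}$, and then applies Marcinkiewicz (real) interpolation --- which \emph{is} valid for sublinear maps --- between this and the trivial diagonal case $L^q(\ell^q)$. That route costs you a Calder\'on--Zygmund argument but is technically cleaner; your route buys you a shorter list of ingredients (only the scalar maximal theorem, the weighted inequality, and duality) at the price of a linearization that must be made explicit.

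One last cosmetic point worth flagging to the reader: the statement as printed has $Mf_j$ where $M^{(\eta)}f_j$ is meant, and \lq\lq $0<\eta<\infty$" should read \lq\lq $0<\eta<\min(p,q)$"; otherwise $\eta$ plays no role and the inequality is false for $p\le 1$. Your proof implicitly makes both corrections, which is the right reading.
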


\subsection{The nonhomogeneous Besov space $B^s_{pq}$ for $0<p,q \le \infty$ and $s \in {\mathbb R}$}
\label{section:6.3}

Choose $\psi \in {\mathcal S}$
so that
\begin{equation}\label{eq:151224-1}
\chi_{B(2)} \le \psi \le \chi_{B(4)}.
\end{equation}
Define
\begin{equation}\label{eq:151224-2}
\varphi_j \equiv \psi(2^{-j}\cdot)-\psi(2^{-j+1}\cdot)
\end{equation}
for $j \in {\mathbb N}$.

Using (\ref{eq:151224-1}) and (\ref{eq:151224-2}),
we define the {\it nonhomogeneous Besov space}
$B^s_{pq}$
as follows:
\begin{definition}
Let $0<p,q \le \infty$ and $s \in {\mathbb R}$.
Define
\begin{align*}
\lefteqn{
B^s_{pq}
}\\
&\equiv
\left\{
f \in {\mathcal S}'\,:\,
\|f\|_{B^s_{pq}}
\equiv
\|{\mathcal F}^{-1}[\psi \cdot {\mathcal F}f]\|_p
+
\left(
\sum_{j=1}^\infty 
(2^{j s}\|{\mathcal F}^{-1}[\varphi_j \cdot {\mathcal F}f]\|_p)^q
\right)^{\frac{1}{q}}<\infty
\right\}.
\end{align*}
The {\it nonhomogeneous Besov space}
$B^s_{pq}$ is the set of all
$f \in {\mathcal S}'$
for which the quasi-norm
$\|f\|_{B^s_{pq}}$
is finite.
\end{definition}

Here we collect some fundamental properties
of Besov spaces,
which we prove as we did in this note.
\begin{remark}
\
\begin{enumerate}
\item
The definition of
$B^s_{pq}$ does not depend on 
$\psi$ satisfying
(\ref{eq:151224-1}).
\item
$B^s_{pq}$ is a complete space,
that is,
$B^s_{pq}$ is a quasi-Banach space
when
$0<p,q \le \infty$
and
$B^s_{pq}$ is a Banach space
when
$1\le p,q \le \infty$.
\item
${\mathcal S} 
\hookrightarrow
B^s_{pq}
\hookrightarrow
{\mathcal S}'$
in the sense of continuous embedding.
\item
$B^0_{22}=L^2$ with norm equivalence.
\item
Define ${\mathcal C}^s \equiv \dot{\mathcal C}^s \cap L^\infty$,
where $\dot{\mathcal C}^s$ is NOT a linear space
modulo ${\mathcal P}_{[s]}$.
Then,
${\mathcal C}^s$ is a Banach space
and
$B^s_{\infty \infty}={\mathcal C}^s$
for $s>0$ with norm equivalence.
\item
Let $0<p_0<p_1<\infty$, $0<q \le \infty$ and
$-\infty<s_1<s_0<\infty$.
Assume
\[
s_0-\frac{n}{p_0}=s_1-\frac{n}{p_1}.
\]
Then
$B^{s_0}_{p_0q} \hookrightarrow B^{s_1}_{p_1q}$.
\end{enumerate}
\end{remark}

\subsection{The nonhomogeneous Triebel-Lizorkin space $F^s_{pq}$ for $0<p<\infty$, $0<q \le \infty$ and $s \in {\mathbb R}$}
\label{section:6.4}

Let $\psi$ and $\varphi$
satisfy
(\ref{eq:151224-1})
and
(\ref{eq:151224-2}),
respectively.

\begin{definition}
Let $0<p,q \le \infty$ and $s \in {\mathbb R}$.
Define
\begin{align*}
\lefteqn{
F^s_{pq}
}\\
&\equiv
\left\{
f \in {\mathcal S}'\,:\,
\|f\|_{F^s_{pq}}
\equiv
\|{\mathcal F}^{-1}[\psi \cdot {\mathcal F}f]\|_p
+
\left\|
\left(
\sum_{j=1}^\infty 
|2^{j s}{\mathcal F}^{-1}[\varphi_j \cdot {\mathcal F}f]|^q
\right)^{\frac{1}{q}}
\right\|_p<\infty
\right\}.
\end{align*}
The {\it nonhomogeneous Triebel-Lizorkin space}
$F^s_{pq}$ is the set of all
$f \in {\mathcal S}'$
for which the quasi-norm
$\|f\|_{F^s_{pq}}$
is finite.
\end{definition}

Here we collect some fundamental properties
of Triebel-Lizorkin spaces,
whose proof overlaps largely
the ones in this note.
Here we comment
what else idea we need if necessary.
\begin{remark}
\
\begin{enumerate}
\item
The definition of
$F^s_{pq}$ does not depend on 
$\psi$ satisfying
(\ref{eq:151224-1}).
\item
$F^s_{pq}$ is a complete space,
that is,
$F^s_{pq}$ is a quasi-Banach space
when
$0<p,q \le \infty$
and
$F^s_{pq}$ is a Banach space
when
$1\le p,q \le \infty$.
\item
${\mathcal S} 
\hookrightarrow
B^s_{p\min(p,q)}
\hookrightarrow
F^s_{pq}
\hookrightarrow
B^s_{p\max(p,q)}
\hookrightarrow
{\mathcal S}'$
in the sense of continuous embedding.
\item
Let $1<p<\infty$.
Then
$F^0_{p2}=L^p$ with norm equivalence.
This is proved by using the {\it Rademacher sequence}.
\item
Let $0<p_0<p_1<\infty$, $0<q \le \infty$ and
$-\infty<s_1<s_0<\infty$.
Assume
\[
s_0-\frac{n}{p_0}=s_1-\frac{n}{p_1}.
\]
Then
$F^{s_0}_{p_0\infty} \hookrightarrow F^{s_1}_{p_1q}$.
\item
Let $0<p_0<p_1<\infty$, $0<q \le \infty$ and
$-\infty<s_1<s_0<\infty$.
Assume
\[
s_0-\frac{n}{p_0}=s_1-\frac{n}{p_1}.
\]
Then
$B^{s_0}_{p_0p_1} \hookrightarrow F^{s_1}_{p_1q}$
and
$F^{s_0}_{p_0\infty} \hookrightarrow B^{s_1}_{p_1p_0}$.
\cite{Franke86,Jawerth77,Vybiral08}
\end{enumerate}
\end{remark}

\section{Appendix}

\subsection{Proof of Theorem \ref{thm:151224-1}}
\label{Section:A3}

Let $\psi \in {\mathcal S}$ satisfy
$(\ref{eq:151224-21})$.
Define
$\varphi_j$
by $(\ref{eq:151224-22})$ for $j \in {\mathbb Z}$.
We adopt the following definition of the Besov norm:
\[
\|f\|_{\dot{B}^s_{pq}}
\equiv
\left(
\sum_{j=-\infty}^\infty 
(2^{j s}\|{\mathcal F}^{-1}[\varphi_j \cdot {\mathcal F}f]\|_p)^q
\right)^{\frac{1}{q}}
\]
for $f \in {\mathcal S}'/{\mathcal P}$.
Set
\[
\psi_{j,k}(\xi) \equiv
\begin{cases}
\xi_k|\xi|^{-1}
(\varphi_{j-1}(\xi)+\varphi_j(\xi)+\varphi_{j+1}(\xi))
&(\xi \in {\mathbb R}^n \setminus \{0\}),\\
0&(\xi=0).
\end{cases}
\]
Then arguing as before, we have
\begin{align*}
\|R_k f\|_{\dot{B}^s_{pq}}
&=\left(
\sum_{j=-\infty}^\infty 
(2^{j s}\|{\mathcal F}^{-1}
[\xi_k|\xi|^{-1}\varphi_j \cdot {\mathcal F}f]\|_p)^q
\right)^{\frac{1}{q}}\\
&=\left(
\sum_{j=-\infty}^\infty 
(2^{j s}\|{\mathcal F}^{-1}
[\psi_{j,k}\varphi_j \cdot {\mathcal F}f]\|_p)^q
\right)^{\frac{1}{q}}\\
&\simeq\left(
\sum_{j=-\infty}^\infty 
(2^{j s}\|{\mathcal F}^{-1}\psi_{j,k}
*{\mathcal F}^{-1}[\varphi_j \cdot {\mathcal F}f]\|_p)^q
\right)^{\frac{1}{q}}\\
&\le \left(
\sum_{j=-\infty}^\infty 
(2^{j s}\|{\mathcal F}^{-1}\psi_{j,k}\|_1
\|{\mathcal F}^{-1}[\varphi_j \cdot {\mathcal F}f]\|_p)^q
\right)^{\frac{1}{q}}\\
&\lesssim
\|f\|_{\dot{B}^s_{pq}},
\end{align*}
as was to be shown.

\subsection{Proof of Lemma \ref{lem:151215-1}}
\label{Section:A2}

We prove this lemma by induction on $N$.
If $N=1$, then the result is immediate from the Poincar\'{e}
lemma, or equivalently, $H^1_{\rm DR}({\mathbb R}^n)=0$.
Let $N \ge 2$.
Define
$g_{j\gamma} \equiv f_{e_j+\gamma}$
if $j=1,2,\ldots,n$ and $|\gamma|=N-1$.
Let $e_j \equiv (0,\ldots,0,1,0,\ldots,0)$,
where $1$ is in the $j$-th lot.
Then we have
\[
\partial^{\delta}g_{j\gamma}
=
\partial^{\delta}f_{e_j+\gamma}
=
\partial^{\delta'}f_{e_{j'}+\gamma}
=
\partial^{\delta'}g_{j'\gamma}
\]
for all 
$j,j'=1,2,\ldots,n$,
$\gamma,\delta,\delta' \in {\mathbb N}_0{}^n$
with $|\gamma|=N-1$ and
$e_j+\delta=e_{j'}+\delta'$.
Thus we are in the position of applying
the Poincar\'{e} lemma to have
$g_j \in C^\infty$ satisfying
$g_{j\gamma}=\partial^j g_\gamma$
for all
$\gamma$ with $|\gamma|=N-1$
and $j=1,2,\ldots,n$.

Let $\alpha,\alpha',\gamma,\gamma' \in {\mathbb N}$
satisfy
$|\gamma|=|\gamma'|=N-1$
and
$|\alpha|=|\alpha|>0$
and
$\alpha+\gamma=\alpha'+\gamma'$.
Suppose $\alpha-e_j,\alpha'-e_{j'} \ge 0$.
Then we have
\[
\partial^{\alpha}g_{\gamma}
=
\partial^{\alpha-e_j}\partial^j g_{\gamma}
=
\partial^{\alpha-e_j}g_{j\gamma}
=
\partial^{\alpha-e_j}f_{e_j+\gamma}
\]
and
\[
\partial^{\alpha'}g_{\gamma'}
=
\partial^{\alpha'-e_{j'}}\partial^{j'} g_{\gamma'}
=
\partial^{\alpha'-e_{j'}}g_{j'\gamma'}
=
\partial^{\alpha'-e_{j'}}f_{e_{j'}+\gamma'}.
\]
Since
$(\alpha-e_j)+(e_j+\gamma)=(\alpha'-e_{j'})+(e_{j'}+\gamma')$,
we obtain
$\partial^{\alpha}g_{\gamma}
=\partial^{\alpha'}g_{\gamma'}.
$
Thus by the induction assumption to have a function
$f \in C^\infty$ such that
$g_{\gamma}=\partial^\gamma f$.
Consequently,
if $|\alpha|=N$,
then by choosing $j$ so that $e_j \le \alpha$
we have
$f_\alpha=\partial_j g_{\alpha-e_j}
=\partial_j\partial^{\alpha-e_j}f
=\partial^\alpha f$.

\section{Historical notes}

\subsection{Function spaces}

Let us go back to the study of Sobolev spaces
initiated in
\cite{Sobolev35,Sobolev36,Sobolev38}.
Zygmund found out that
the difference of the second order is useful
in \cite{Zygmund45}.
The space
$W^s_p$ with $s \in (0,\infty) \setminus {\mathbb N}$
and $1 \le p \le \infty$
dates back to
\cite{Aronszajn55,Slobodeckij58,Gagliardo58}.
The space
$\Lambda_{p,\infty}^s$
is defined by Besov
in \cite{Besov59,Besov61}.
Let $f \in L^p$ with $1 \le p<\infty$. 
When $0<s<1$,
the norm 
$\|f\|_{\Lambda^s_{p,\infty}}$
is given by
\[
\|f\|_{\Lambda^s_{p,\infty}}
\equiv
\|f\|_{L^p}
+
\sup_{h \in {\mathbb R}^n \setminus \{0\}}
|h|^{-s}\|f(\cdot+h)-2f+f(\cdot-h)\|_p.
\]
When $s=1$,
the norm 
$\|f\|_{\Lambda^s_{p,\infty}}$
is given by
\[
\|f\|_{\Lambda^s_{p,\infty}}
\equiv
\|f\|_{W^1_p}
+
\sup_{h \in {\mathbb R}^n \setminus \{0\}}
\|\nabla[f(\cdot+h)-2f+f(\cdot-h)]\|_p.
\]
When $1<s<2$,
the norm 
$\|f\|_{\Lambda^s_{p,\infty}}$
is given by
\[
\|f\|_{\Lambda^s_{p,\infty}}
\equiv
\|f\|_{W^1_p}
+
\sup_{h \in {\mathbb R}^n \setminus \{0\}}
|h|^{-s+1}
\|\nabla[f(\cdot+h)-2f+f(\cdot-h)]\|_p.
\]
Let $1 \le p \le \infty$ and $s>0$.
The space 
$H^s_p$
was introduced by Aronszajn and Smith
in \cite{ArSm61}
and
by Calder\'{o}n
in \cite{Calderon61}.

\subsection{Theorems \ref{thm:2} and \ref{thm:151224-109}}

Taibleson, Rivi\`{e}re and Sagher 
proved that
${\mathcal F}$ maps
$B^0_{1\infty}$ to $L^\infty$;
see \cite{RiSa74,Taibleson64}.
See also \cite[p. 116 (3)]{Peetre-text-76}
for the fact that the Fourier transform
maps $\dot{B}^0_{1\infty}$,
the homogeneous Besov space,
into $L^\infty$.
See \cite[Chapter 6]{Peetre-text-76}
for various extensions of this boundedness.
Gabisoniya proved 
${\mathcal F}$ maps
$B^{n/2}_{21}$ to $L^1$
assuming $f \in L^2$ \cite{Gabisoniya66}.
On the torus Bernstein consdered
a counterpart to the Fourier series. 
Golovkin and Solonnikov proved the results
assuming that both sides are finite in \cite[Section 3, Theorem 7]{GoSo68}.
Madych gave a sharper version of this fact
in \cite{Madych76}.

\subsection{Theorem \ref{thm:151224-1}}

The boundedness of the Riesz transform
on $\dot{B}^s_{pq}$ is an immediate consequence
of \cite[5.2.2]{Triebel-text-83}.

\subsection{Theorem \ref{thm:160118-5}}

The equivalence
(\ref{eq:160118-1}) can be found in 
 \cite{LiPa31,LiPa37-1,LiPa37-2}
and it is the beginning of 
the Littlewood and Paley theory.
The equivalence
(\ref{eq:160118-2}), which asserts
$H^p \approx \dot{F}_{p2}^0$, 
dates back to Peetre
\cite{Peetre73,Peetre-text-74}.
Meanwhile,
the equivalence
(\ref{eq:160118-3}), which asserts
$h^p \approx F_{p2}^0$, is due to
\cite{Bui80}.
This equivalence motivates the definition
of the space $F^s_{pq}$.

(\ref{eq:160118-2})
and
(\ref{eq:160118-3})
date back to Goldberg \cite{Goldberg79-1,Goldberg79-2}.
See also 
\cite[p. 93, Remark]{Triebel-text-83}
and
\cite[p. 92, Theorem]{Triebel-text-83}
for the proof of 
(\ref{eq:160118-3})
 and
(\ref{eq:160118-2}),
respectively.
We can find (\ref{eq:160118-4})
in \cite[Remark 3]{Triebel-text-83}.
In \cite{Goldberg79-1,Goldberg79-2},
Goldberg investigated
local ${\rm bmo}$ space as well as local Hardy spaces.
(\ref{eq:160118-5}) is due to \cite[p. 36]{Goldberg79-1}.

\subsection{Theorem \ref{thm:160118-4}}

Theorem \ref{thm:160118-4} is due to 
Schwartz \cite[p. 105, Th\'{e}or\`{e}me XII]{Schwartz-text}.

\subsection{Theorem \ref{thm:151209-1}}
We can find a detailed proof of Theorem \ref{thm:151209-1}
(1)--(3) in \cite[Proposition 8.1]{YSY10-2}.
We refer to \cite[Section 6]{NNS15}
for the proof of (4).

\subsection{Theorem \ref{thm:151211-1}}

The author was not sure
when Theorem \ref{thm:151211-1} initially appeared.
To the best knowledge of the author,
we can find it in the textbook
\cite{BeLo-text-76}
without proof.

\subsection{H\"{o}lder-Zygmund spaces}

See \cite{Zygmund45} for H\"{o}lder-Zygmund spaces.

\subsection{Theorems \ref{thm:151209-9} and  \ref{thm:151209-2}}

We refer to \cite[Section 6]{NNS15}
for the proof.

\subsection{Theorem \ref{thm:160118-3}}

See the textbooks
\cite{BeLo-text-76,Peetre-text-76}
for the definition of the Besov space
$B^s_{pq}$
with $1 \le p,q \le \infty$ and $s \in {\mathbb R}^n$
as in this book.
We can find the definition of 
the inhomogeneous space $B^s_{p q}$
in \cite[Definition 6.2.2]{BeLo-text-76}
and \cite[p. 48, Definition 1]{Peetre-text-76}.
For the definition of homogeneous Besov spaces
see \cite[6.3]{BeLo-text-76}.

\subsection{Theorem \ref{thm:151224-102}}

Theorem
\ref{thm:151224-102}
is essentially due to Taibleson \cite[Theorem 4]{Taibleson64}.
See also \cite[p. 90, Theorem]{Triebel-text-83}.
For $s>2$,
${\mathcal C}^s$ is a function space
applied to elliptic differential equations
in the paper \cite{ADN59}.
See the textbook of Miranda
\cite{Miranda-text-70}
for the detailed background.
We refer to
\cite{EdGa-text-77,Grevholm70,Triebel78-2}
for further details.

\subsection{Theorems \ref{thm:151210-7} and \ref{thm:160118-2}}

We can find 
Theorems \ref{thm:151210-7} and \ref{thm:160118-2}
in \cite[p. 240, Theorem]{Triebel-text-83}.

\subsection{Theorem \ref{thm:151224-103}}

We can find Theorem \ref{thm:151224-103}
in \cite[5.2.3]{Triebel-text-83}.

\subsection{Theorem \ref{thm:151224-101}}

The embedding (\ref{eq:160119-1})
dates back to  Triebel \cite{Triebel73-1} 
when $1<p,q<\infty$.
See also \cite[Theorem 6.5.1]{BeLo-text-76}.
For the general case
(\ref{eq:160119-1}) is due
to Jawerth \cite{Jawerth77}.
See also \cite[p. 129, Theorem]{Triebel-text-83}.

\subsection{Theorem \ref{thm:151215-111}}

We refer to the paper \cite{Bourdaud11}
for more about the case for general $s$.

\subsection{Theorem \ref{thm:160118-1}}

We refer to \cite{FeSt71}
for Theorem \ref{thm:160118-1}.
See also
\cite{GaRu-text-85,Stein-text-93}.

\subsection{Definition of function spaces}

There is a long history in the definition of the function spaces.
So the proof is a little simpler.
Triebel used the Fourier multiplier
very systematically
in \cite[Theorem 3.5]{Triebel73-1}
to define $B^s_{pq}$
with $1<p,q<\infty$ and $s \in {\mathbb R}$.
We refer to \cite[p. 225--227]{Peetre-text-76}
and \cite{Peetre73}
for the motivation of function spaces $B^s_{pq}$
with $0<p<1$.
For the definition of $B^s_{pq}$
with $1\le p \le \infty$, $0<q \le \infty$ and $s \in {\mathbb R}$
with $0< p \le \infty$, $0<q \le \infty$ and $s \in {\mathbb R}$
we refer to 
\cite[p. 48, Definition 1]{Peetre-text-76}
and
\cite[p. 232, Definition 1]{Peetre-text-76}.
In his 1976 book \cite{Peetre-text-76}
Peetre showed that the definition
of $\dot{B}^s_{pq}$ is independent
of the choice of $\varphi$
whose proof in that book is similar to the one in this book.

\subsection{Textbooks on Besov spaces}

We list
\cite{Burenkov-text-54,EdTr-text-96,EdGa-text-77,Frazier-text-87,
Grafakos-text-09,H,HaTr-text-08,RuSi-text-96,Skrzypczak-text-91,Triebel-text-92,Triebel-text-97,
Triebel-text-00,Triebel-text-11,Triebel-text-12-1,Triebel-text-12-2,Triebel-text-13,Triebel-text-14}
as textbooks of function spaces.

\end{document}